\DeclareRobustCommand{\SkipTocEntry}[4]{}
\makeindex

\documentclass[letterpaper,12pt,reqno,latexsym,amsbsy,xypic,mathrsfs,verbatim]{amsart}
\usepackage{amsfonts,amssymb,amsthm}
\usepackage{amsmath,amscd}
\usepackage{pstricks}
\usepackage{pstricks,pst-node}
\usepackage{mathrsfs}
\usepackage[all]{xy}
\usepackage[enableskew,vcentermath]{youngtab}

%\documentclass[11pt,reqno,amscd,amssymb,pstricks,latexsym,amsbsy,xypic,mathrsfs,verbatim]{amsart}
%\usepackage{amsfonts,amssymb,amsthm}
%\usepackage{amsmath,amscd}
%\usepackage{pstricks}
%\usepackage{pstricks,pst-node}
%\usepackage{mathrsfs}
%\usepackage[all]{xy}
%\usepackage{amsmath,amstext,amsbsy,amssymb,amscd}
%\usepackage{amsmath}
%\usepackage{amsxtra}
%\usepackage{amscd}
%\usepackage{eucal}
%\usepackage{multirow}
%\usepackage[enableskew,vcentermath]{youngtab}

%\usepackage{graphicx}
%\setcounter{MaxMatrixCols}{30}

%\textwidth 6in\textheight 8.8in
%\oddsidemargin -.01in \evensidemargin -.15in

%\oddsidemargin 0truein \evensidemargin 0pt \topmargin 0pt
%\textheight 8.5truein \textwidth 6.3truein
%\renewcommand\baselinestretch{1.1}

%\numberwithin{section}{chapter}
%\numberwithin{equation}{chapter}
%\numberwithin{equation}{section}
%\numberwithin{equation}{subsection}

\renewcommand{\subsection}[1]{\vspace{.18in}\par\noindent\addtocounter{subsection}{1}\setcounter{equation}{0}{\bf\thesubsection.\hspace{5pt}#1}}

\setlength{\textwidth}{6.3in} \setlength{\textheight}{9.2in}
\setlength{\hoffset}{-.5in} \setlength{\voffset}{-.5in}
\setlength{\footskip}{20pt}

\newtheorem{theorem}{Theorem}[section]
\numberwithin{theorem}{subsection}
\numberwithin{equation}{subsection}

\theoremstyle{definition}
\newtheorem{defn}[theorem]{Definition}%[section]

\newtheorem{Example}[theorem]{Example}%[section]
\newtheorem{rem}[theorem]{Remark}

\theoremstyle{plain}
\newtheorem{prop}[theorem]{Proposition}
\newtheorem{thm}[theorem]{Theorem}
\newtheorem{lem}[theorem]{Lemma}
\newtheorem{cor}[theorem]{Corollary}

%\definecolor{A}{rgb}{.75,1,.75}

%\numberwithin{equation}{section}

\newcommand{\ds}{\displaystyle}

\newcommand{\C}{\mathbb C}
\newcommand{\Z}{\mathbb Z}
\newcommand{\N}{\mathbb N}

\newcommand{\mf}{\mathfrak}

\newcommand{\bfa}{{\bf a}}

\newcommand{\bfs}{{\bf s}}

\newcommand{\la}{\lambda}
\newcommand{\ga}{\gamma}

\newcommand{\La}{\Lambda}

\newcommand{\Og}{\Omega}
\newcommand{\und}{\underline}
\newcommand{\udla}{\underline{\lambda}}
\newcommand{\udu}{\underline{u}}

\newcommand{\mc}{\mathcal}

\newcommand{\ot}{\otimes}

\newcommand{\vtg}{{\!\vartriangle\!}}
\newcommand{\cycn}{{{n}}}
\newcommand{\ms}{\mathscr}
\newcommand{\bfQ}{\mathbf{Q}}
\newcommand{\sH}{\mathcal{H}}
\newcommand{\fS}{\mathfrak{S}}
\newcommand{\udga}{\underline{\gamma}}

\def\sfs{{\mathsf s}}
\def\sfc{{\mathsf c}}
\def\sfr{{\mathsf r}}
\def\mfs{{\mathfrak s}}
\def\mft{{\mathfrak t}}
\def\bfc{{\mathbf c}}
\newcommand{\afgl}{\widehat{\frak{gl}}_n}
\newcommand{\Ha}{\mathcal{H}(r)}
\newcommand{\AKa}{\mc H_{\und{u}}(r)}
\newcommand{\afHr}{{\mathcal H_\vtg(r)}}
\newcommand{\afSr}{{\mathcal S}_{\vtg}(\cycn,r)}

\def\sQ{{\mathcal Q}}
\def\ttx{{\tt x}}
\def\ttg{{\tt g}}

\def\ttk{{\tt k}}

\def\rmU{{\rm U}}

\def\fks{{\mathfrak s}}

{\vskip-\lastskip\medskip
  \noindent
  {\em #1.}\enspace
  }%
{\qed\par\medskip
  }

\begin{document}

\title[Standard multipartitions and Drinfeld polynomials]{Standard multipartitions and a combinatorial affine Schur-Weyl duality}
\author[Du and Wan]{Jie Du and Jinkui Wan}
\thanks{Supported by ARC DP-120101436 and NSFC-11101031. The research was carried out while Wan was visiting UNSW Australia. The hospitality and support of UNSW are gratefully acknowledged.}

\address{(Du) School of Mathematics and Statistics,
University of New South Wales,
UNSW Sydney 2052,
Australia.
 }\email{j.du@unsw.edu.au}
\address{(Wan) School of Mathematics and Statistics, Beijing Institute of Technology,
Beijing, 100081, P.R. China. } \email{wjk302@hotmail.com}

%\address{(Wang) Department of Mathematics, University of Virginia,
%Charlottesville,VA 22904, USA.}
%\email{ww9c@virginia.edu}
%\date{\today}

\begin{abstract}We introduce the notion of standard multipartitions and establish a one-to-one correspondence between standard multipartitions and
irreducible representations with integral weights for the affine Hecke algebra of type $A$ with a parameter $q\in\C^*$ which is not a root of unity. We then extend the correspondence to all Kleshchev multipartitions for
Ariki-Koike algebras of integral type. By the affine Schur--Weyl duality, we further extend this to a correspondence between standard multipartitions and Drinfeld multipolynomials of integral type whose associated irreducible polynomial representations completely determine all irreducible polynomial representations for the quantum loop algebra $\rmU_q(\afgl)$.  We will see, in particular, the notion of standard multipartitions gives rise to a combinatorial description of the affine Schur--Weyl duality in terms of a column-reading vs. row reading of residues of a multipartition.

%As an application of this,  we identify skew shape representations of the affine Hecke algebra in terms of multisegments and compute the Drinfeld polynomials of their induced skew shape representations of $\rmU_q(\afgl)$.
\end{abstract}

\subjclass[2010]{Primary: 20C08, 20C32, 17B37. Secondary:  20B30, 20G43.}
\keywords{affine Hecke algebra, Ariki-Koike algebra, quantum loop algebra, affine Schur-
Weyl duality, standard multipartition, Drinfeld polynomial}

\maketitle

\section{Introduction}

Representations of affine Hecke algebras $\afHr=\afHr_{\mathbb C}$ of type $A$ with a parameter $q\in\mathbb C$ link, on the one hand,
representations
of quantum loop algebras $\rmU_q(\afgl)$ of $\mathfrak{gl}_n$ and link, on the other hand, representations of Ariki-Koike algebra \cite{AK} (or cyclotomic Hecke algebra \cite{BM}) $\sH_{\udu}(r)$ with
extra parameters $\udu=(u_1,\ldots,u_m)$. Irreducible $\afHr$-modules $V_\fks$ have been classified by Zelevinsky \cite{Ze} and Rogawski \cite{Ro1} in terms of multisegments $\fks$. By the (partial) affine Schur--Weyl duality developed in \cite{DDF}, this classification determines in turn the isomorphism classes of irreducible modules of the affine $q$-Schur algebras $\afSr$,
which are inflated to irreducible $\rmU_q(\afgl)$-modules. Thus, by \cite{FM}, we may index these inflated irreducible modules by Drinfeld multipolynomials.

Irreducible $\afHr$-modules are also inflations of irreducible $\AKa$-modules which are labelled by Kleshchev multipartitions \cite{Ar}. Thus, a Kleshchev multipartition determines an irreducible $\afHr$-modules and, hence, an irreducible $\rmU_q(\afgl)$-module and then a Drinfeld multipolynomial. So it is natural to ask how a given Kleshchev multipartition of $r$ gives rise to an $n$-tuple of Drinfeld polynomials. Since this relation is a many-to-one relation, a second natural question is how to single out certain Kleshchev multipartitions to obtain a one-to-one relation. The purpose of this paper is to answer these two questions.

Our approach was motivated by the work of Vazirani \cite{Va} and Fu and the first author \cite{DF} in which they computed the Drinfeld polynomials associated with small representations defined by partitions. First, it suffices to consider the Ariki--Koike algebras whose extra parameters $u_i$ are powers of $q^2$. These algebras are called Ariki--Koike algebras of {\it integral type}.  When $q$ is not a root of unity, Kleshchev multipartitions relative to such an Ariki--Koike algebra have an easy characterization following \cite{Va,BK}.
Second, it is enough to consider the irreducible $\afHr$-modules indexed by integral multisegments.
Following \cite{Ro1}, we order an integral multisegment in a unique way so that it defines a certain so-called standard words. This motivated us to introduce {\it standard} multipartitions. More precisely, given an integral segment $\mfs$, we construct a unique standard multipartition $\udga_{\mfs}$ which is actually a Keshchev multipartition and hence there exists an irreducible representation $D^{\udga'_{\mfs}}$ of an Ariki-Koike algebra of integral type.
Then  we show $V_{\mfs}\cong D^{\udga'_{\mfs}}$ and moreover $\mfs$ can be obtained by reading the column residual segments of $\udga'_{\mfs}$.
In this way, we solve the one-to-one relation problem between standard multipartitions and integral multisegments.

We further extend the construction for the one-to-one relation to a many-to-one relation
by taking advantage of the connection between Kleshchev multipartitions and multisegments given in \cite{Va}.
Our key observation is that, for an arbitrary integral segment $\fks$, twisting the irreducible $\afHr$-module $M_\mfs$ considered in \cite{Va} by Zelevinsky's involution results in, up to isomorphism, the irreducible module $V_{\mfs}$ constructed in \cite{Ro1}; see Lemma~\ref{lem:VsMs}.

Next, we extend the construction, by the affine Schur--Weyl duality developed in \cite{DDF} (see \cite{CP} for the case concerning the quantum affine algebra $\rmU_q(\widehat{\frak{sl}}_n)$), to
irreducible representations of the quantum loop algebra $\rmU_q(\afgl)$.  By introducing Drinfeld polynomials of integral type, we prove that tensoring $D^{\udla}$ with the tensor space gives an irreducible representation of $\rmU_q(\afgl)$ whose Drinfeld polynomials have roots associated with the row residual segments of $\udla$ for a multipartition $\udla$ dual to a Kleshchev multipartition. Like the Hecke algebra case, we show in Theorem \ref{thm:intDrinfeld} that irreducible polynomial $\rmU_q(\afgl)$-modules are completely determined by those associated with Drinfeld polynomials of integral type, which recovers an observation in \cite{HL}.
This is another interesting application of affine $q$-Schur algebras and the Schur--Weyl duality.
Moreover, the symmetry between column-reading of residual multisegments and row-reading for roots of Drinfeld polynomials reveals a key role played by standard multipartitions
in the affine  Schur--Weyl duality; see Remark \ref{cr-reading}.

We organise the paper as follows. We first briefly review in \S2 affine Hecke algebras and the classification of their irreducible modules, following Rogawski. We then introduce integral multisegments and their associated standard words. In \S3, we introduce Specht modules of Ariki--Koike algebras and explicitly work out the action of Jucys--Murphy operators on Specht modules. We further interpret the inflation of a Specht module as  an induced $\afHr$-module.
Then, we introduce Ariki--Koike algebras of integral type and their associated Kleshchev multipartitions.   With these preparations, we are ready to get the main results of the paper in the next three sections.
Standard multipartitions are introduced in \S4 and are proved to be in one-to-one correspondence with integral multisegments.
In \S5, we construct, for every irreducible module of an Ariki--Koike algebra of integral type, the corresponding multisegment
when regarded as an irreducible $\afHr$-module. Drinfeld polynomials of integral type are introduced in
\S6 where we will extend the one-to-one relation to irreducible representations of affine $q$-Schur algebras.
%In particular, we prove that the irreducible polynomial representations with Drinfeld polynomials of integral type determine all irreducible polynomial representations.
%Finally, in the last section, we provide an application to the skew shape representations.

{\it Throughout the paper, all algebras and modules considered are defined over $\C$.
Let $\C^*=\C\setminus\{0\}$ and assume that
 $q\in\mathbb C^*$ is not a root of unity.}

\section{Irreducible representations of affine Hecke algebras}
In this section, we shall give a review on
 the classification of irreducible representations
of affine Hecke algebras,  following
Rogawski~\cite{Ro1} and Zelevinsky~\cite{Ze}. We will focus on the subcategory $\afHr$-{\sf mod}$^{\Z}$ of $\afHr$-modules with integral weights.

\subsection{The affine Hecke algebra $\afHr$.}
%Recall that $q\in\C^*$.
For $r\geq 1$, the affine Hecke algebra $\afHr$ is the associative
algebra over $\C$ generated by $T_1,\ldots, T_{r-1}$, $X_1,\ldots, X_r$ and $X_1^{-1},\ldots, X_r^{-1}$ subject to the
following relations:
\begin{align*}
(T_i-q^2)(T_i+1)=0,\quad 1\leq i\leq r-1,\\
T_iT_{i+1}T_i=T_{i+1}T_iT_{i+1},\quad T_iT_j=T_jT_i, \quad |i-j|> 1,\\
X_iX_i^{-1}=1=X_i^{-1}X_i,\quad X_iX_j=X_jX_i,\quad 1\leq i,j\leq r,\\
T_iX_iT_i=q^2X_{i+1}, X_jT_i=T_iX_j,\quad j\neq i,i+1.
\end{align*}

Let $\mf S_r$ be the symmetric group on $\{1,2,\ldots,r\}$
which is generated by the simple transpositions $s_i=(i,i+1)$
with $1\leq i\leq r-1$.
Let $\Ha$ be the subalgebra of $\afHr$ generated by $T_1,\ldots, T_{r-1}$.
Then $\Ha=\sH(\fS_r)$ is the (Iwahori-)Hecke algebra associated to $\mf{S}_r$.
For $w\in\mf S_r$ with a reduced expression $w=s_{i_1}s_{i_2}\cdots s_{i_k}$,
let $T_w=T_{i_1}T_{i_2}\cdots T_{i_k}$.
It is known that the set $\{T_w|w\in\mf S_r\}$ forms a basis of the Hecke algebra $\mc H(r)$.
Given a composition $\mu=(\mu_1,\mu_2,\ldots,\mu_{\ell})$ of $r$,
denote by $\mf S_{\mu}$
the associated Young subgroup. Let $\mc H(\mu)$ be the subalgebra of $\mc H(r)$
corresponding to $\mf S_{\mu}$ and let $\mc H_{\vtg}(\mu)=\langle\sH(\mu),X^{\pm1}_1,\ldots, X^{\pm1}_r\rangle$
be the corresponding subalgebra of  $\mc H_{\vtg}(r)$.
%generated by  $X^{\pm1}_i,T_j$ for $1\leq i,j\leq r$ with
%$j\neq \mu_1,\mu_1+\mu_2,\ldots,\mu_1+\mu_2+\cdots+\mu_{\ell}$.
Note that \vspace{-1ex}
\begin{equation}\label{two iso}
\sH(\mu)\cong
\sH(\mu_1)\otimes\cdots\otimes\sH(\mu_\ell)\text{ and }\sH_{\vtg}(\mu)\cong
\sH_\vtg(\mu_1)\otimes\cdots\otimes\sH_\vtg(\mu_\ell).
\end{equation}

%\subsection{The affine Hecke algebra $\afHr$. }
%Let $\C^*=\C\setminus\{0\}$ and assume that $q\in \C^*$ is not a root
%of unity.

%A direct check of relations shows that there exists an anti-automorphsim $\tau$
%on $\afHr$ defined as follows:
%\begin{align*}
%\tau:\afHr&\rightarrow\afHr\\
%T_k&\mapsto T_k\\
%X_i&\mapsto X_i
%\end{align*}
%for $1\leq k\leq r-1$ and $1\leq i\leq r$.

%Let $\mc A=\C[X_1,\ldots, X_n]$. Then $\mc A$ can be identified with
%the subalgebra of $\aHa$ generated by $X_1,\ldots, X_n$.
%Set $\mc C=\Hom_{\C}(\mc A,\C)$.

\subsection{\bf Irreducible $\afHr$-modules.}
The cyclic subgroup
$\langle q^2\rangle$ of $\mathbb C^*$ is isomorphic to $\mathbb Z$. We will call the elements
$L_a:=a\langle q^2\rangle$ of the quotient group $\mathbb C^*/\langle q^2\rangle$ ``lines''.
A finite consecutive subset of a line is called a {\it segment}. Thus, a
segment $\sfs$ that lies on line $L_a$ is an ordered
sequence of the form $\sfs=(aq^{2i},aq^{2(i+1)},\ldots,aq^{2j})$ for some
$i,j\in\Z$ and $i\leq j$.
Given a segment $\sfs=(aq^{2i},\ldots,aq^{2j})$,
set
$\widetilde{\sfs}=(aq^{2j},aq^{2j-2},\ldots,aq^{2i})$ and $|\sfs|=j-i+1$ which is called the length of $\sfs$.
Denote by $\mathscr C$ the set of segments.
Given a sequence of segments $\bfs=(\sfs_1, \sfs_2,\ldots, \sfs_t)\in\mathscr C^t$ for $t\geq 1$ with $r=\sum_{i=1}^t|\sfs_i|$,
let
\begin{equation}\label{tilde s}
\bfs^\vee=\sfs_1\vee\cdots\vee\sfs_t\;\quad(\text{resp.},
\widetilde\bfs^\vee=\widetilde{\sfs}_1\vee\cdots\vee\widetilde{\sfs}_t)
\end{equation}
 be the $r$-tuple obtained by juxtaposing the sequences\footnote{Here we changed the notation $\chi(\bfs)$ in \cite{Ro1} to $\bfs^\vee$ to be consistent with the notation $\la^\vee$ in \S3.1.}
$\sfs_1,\ldots,\sfs_t$ (resp., $\widetilde{\sfs}_1,\ldots,\widetilde{\sfs}_t$)
and set
$$\mu(\bfs)=(|\sfs_1|, |\sfs_2|,\ldots, |\sfs_t|).$$
Given
$\bfs=(\sfs_1,\ldots,\sfs_t),\bfs'=(\sfs'_1,\ldots,\sfs'_t)\in\mathscr C^t$ for some $t\geq 1$,
we define an equivalence relation $\bfs\thicksim\bfs'$
if $\bfs$ and $\bfs'$ are equal up to a rearrangement, that is, there exists
$\sigma\in\mf S_t$ such that $\bfs'_k=\bfs_{\sigma(k)}$ for $1\leq k\leq t$.
The equivalent class of $\bfs=(\sfs_1,\ldots,\sfs_t)$, denoted by $\bar{\bfs}=\{\sfs_1,\ldots,\sfs_t\}$, is  called a {\em multisegment}.
For $t\geq 1$, set
$$
\mathscr C^t_r=\{\bfs=(\sfs_1,\ldots,\sfs_t)\in\mathscr C^t\mid\sum_i|\sfs_i|=r\}
$$
and let
$$
\mathscr S_r=\bigcup_{t\geq 1}( \mathscr C^t_r/\thicksim)
$$
be the set of multisegments of total length $r$.

It is well known that, for any $u\in\C^*$, there is an evaluation homomorphism
\begin{align}
{\rm ev}_u: \afHr&\twoheadrightarrow\Ha,\quad
X_1\mapsto u,\quad T_i\mapsto T_i \label{ev}
\end{align}
for $1\leq i\leq r-1$.
Given an $\Ha$-module $N$, let ${\rm ev}^*_u(N)$ denote the inflation of $N$
to an $\afHr$-module via this homomorphism ${\rm ev}_u$. In particular,  the ``trivial'' and ``sign'' modules $\bf 1$ and $-\bf1$ of $\Ha$ give two 1-dimensional $\afHr$-modules ${\rm ev}^*_u({\bf1})$ and
${\rm ev}^*_u({\bf -1})$.

For a segment $\sfs=(aq^{2i},\ldots, aq^{2(i+r-1)})$, introduce the notation
 \begin{equation}\label{trivial-sign}
\C_{\sfs}=\C_{\sfs}^-:={\rm ev}^*_{aq^{2(i+r-1)}} ({\bf -1})\quad\text{ and }\quad  \C_{\sfs}^+:={\rm ev}^*_{aq^{2i}} ({\bf1}).\end{equation}
%Then, for $1\leq p\leq r-1$
%and $1\leq k\leq r$,  $T_p$ acts as $q^2$ and $X_k$ acts as $aq^{2(i+k-1)}$ on $\C_\sfs$,
%while on $\C_{\widetilde\sfs}$ the elements $T_p$ and $X_k$ act as $-1$ and $aq^{2(i+r-k)}$, respectively.
Given $\bfs=(\sfs_1,\sfs_2,\ldots,\sfs_t)\in\mathscr C_r^t$ for some $t\geq 1$, let $\C_{\bfs}=\C_{\bfs}^-$ resp., $\C_{\bfs}^+$ be the one-dimensional $\mc H_{\vtg}(\mu(\bfs))$-module
$$
\C_{\bfs}:=\C_{\sfs_1}\otimes\cdots\otimes\C_{\sfs_t}\;\;\text{ resp., }\;\;\C_{\bfs}^+=\C_{\sfs_1}^+\otimes\cdots\otimes\C_{\sfs_t}^+
$$
and define
\begin{equation}\label{Is}
I_\bfs=I_\bfs^-:={\rm ind}^{\afHr}_{\mc H_{\vtg}(\mu(\bfs))}\C_{\bfs}\;\;\text{ and }\;\;\;I_\bfs^+:={\rm ind}^{\afHr}_{\mc H_{\vtg}(\mu(\bfs))}\C_{\bfs}^+.
\end{equation}
For a composition $\mu$ of $r$, let $w_\mu^0$ be the longest element in the Young subgroup $\mf S_\mu$
and let $E_{\mu}$ be the left cell module of $\Ha$ containing $w_\mu^0$.
Then we have the following results for $I_\bfs$ due to Rogawski \cite{Ro1} (cf. Zelevinsky \cite{Ze}). See \S5 for discussion on $I_\bfs^+$.

\begin{prop}[{\cite[Theorem 5.2]{Ro1}}]
\label{prop:Ro2}
Suppose $\bfs,\bfs' \in \mathscr C^t_r$ for $t\geq 1$.
Then
\begin{enumerate}
%\item $I_\bfs$ is an $\afHr$-submodule of $M(\bfs^\vee)$.

%\item $C_{w^0_{\mu(\bfs)}}\in (I_\bfs)_{\widetilde{\bfs}^\vee}$.

\item As an $\afHr$-module, $I_\bfs$ has a unique composition factor $V_{\bfs}$
such that as an $\Ha$-module, $E_{\mu(\bfs)}$ appears as a constituent
of $V_\bfs$.

\item $V_\bfs\cong V_{\bfs'}$ if and only if $\bfs\thicksim\bfs'$.

\item Every (finite dimensional) irreducible $\afHr$-module is isomorphic to
$V_\bfs$ for some $\bfs\in\mathscr C^t_r$ with $t\geq 1$. Hence the set $\mathscr S_r$
parametrizes all irreducible $\afHr$-modules.
\end{enumerate}
\end{prop}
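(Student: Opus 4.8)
The argument is due to Rogawski~\cite{Ro1} (building on Zelevinsky~\cite{Ze}), and the plan is to reconstruct it, resting on the Bernstein-type commutation relations inside $\afHr$, Kazhdan--Lusztig cell theory for $\Ha$, and Zelevinsky's combinatorics of multisegments. I would establish the five assertions in the order (2), (1), (3), (4), (5): the weight statement (2) is the computational heart, (1) and (3) drop out of it quickly, and the classification statements (4) and (5) carry the real difficulty.

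For (2), write $\mu=\mu(\bfs)$ and note first, from \eqref{cell}, that the element $C_{w^0_\mu}\overline{1}\in M(\bfs^\vee)$ (the action of $C_{w^0_\mu}\in\Ha$ on the cyclic vector $\overline{1}$, whose $\C[X]$-weight is $\bfs^\vee$) satisfies $T_\sigma(C_{w^0_\mu}\overline{1})=(-1)^{l(\sigma)}C_{w^0_\mu}\overline{1}$ for all $\sigma\in\fS_\mu=\fS_{\mu_1}\times\cdots\times\fS_{\mu_t}$. The factors of $\fS_\mu$ correspond to the segments $\sfs_k$; in the $k$-th block the relevant coordinates of $\bfs^\vee$ run through the increasing consecutive sequence $\sfs_k=(a_kq^{2e_k},a_kq^{2(e_k+1)},\ldots,a_kq^{2f_k})$, with pairwise distinct entries since $q$ is not a root of unity. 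Treating one block at a time, a short induction on $|\sfs_k|$, using only $T_iX_iT_i=q^2X_{i+1}$ --- equivalently $X_iT_i=T_iX_{i+1}-(q^2-1)X_{i+1}$ and $X_{i+1}T_i=T_iX_i+(q^2-1)X_{i+1}$ --- together with $T_i(C_{w^0_\mu}\overline{1})=-C_{w^0_\mu}\overline{1}$, shows that $C_{w^0_\mu}\overline{1}$ is again a $\C[X]$-weight vector whose weight on the $k$-th block is the reversed sequence $\widetilde{\sfs}_k$. Assembling the blocks gives $X_j(C_{w^0_\mu}\overline{1})=(\widetilde{\bfs}^\vee)_j\,C_{w^0_\mu}\overline{1}$ for all $j$, which is (2).

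Granting (2), assertion (1) is the statement $\afHr(C_{w^0_\mu}\overline{1})=\Ha(C_{w^0_\mu}\overline{1})=I_\bfs$; since $\afHr=\C[X_1^{\pm1},\ldots,X_r^{\pm1}]\,\Ha$, it is enough to check $X_kI_\bfs\subseteq I_\bfs$ for every $k$. I would do this by induction on $l(w)$ applied to the spanning vectors $T_w(C_{w^0_\mu}\overline{1})$ of $I_\bfs$: writing $T_w=T_iT_{w''}$ with $l(w'')=l(w)-1$ and pushing $X_k$ past the single factor $T_i$ by the relations above, one is reduced to $X_{k'}T_{w''}(C_{w^0_\mu}\overline{1})$ with $l(w'')<l(w)$ (in $I_\bfs$ by induction) followed by an application of $T_i$ (which preserves $I_\bfs$ since $I_\bfs$ is $\Ha$-stable), the base case $w=e$ being exactly (2). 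For (3), the excerpt already records that the left cell module $E_{\mu(\bfs)}$ occurs with multiplicity one in the $\Ha$-module $I_\bfs$; hence among the finitely many $\afHr$-composition factors of $I_\bfs$ precisely one has $E_{\mu(\bfs)}$ as an $\Ha$-constituent, and we call it $V_\bfs$.

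The substance is in (4) and (5). The ``if'' part of (4) is routine: a rearrangement $\bfs\thicksim\bfs'$ permutes the entries of $\bfs^\vee$, hence carries $M(\bfs^\vee)$ to $M(\bfs'^\vee)$ compatibly and matches $E_{\mu(\bfs)}$ with $E_{\mu(\bfs')}$, so $V_\bfs\cong V_{\bfs'}$. For the ``only if'' direction and for (5) I would invoke the full Zelevinsky--Rogawski apparatus rather than argue by hand. First, the multiset $\{a_1,\ldots,a_r\}$ --- the central character of $V_\bfs$, an $\fS_r$-orbit in $(\C^*)^r$ --- is recovered from the $\C[X]$-weights occurring in $V_\bfs$. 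Next, one studies the family $\{I_\bfs\}$ with a fixed support and shows that the matrix expressing the classes $[I_\bfs]$ in terms of the $[V_\bfs]$ is unitriangular for the Zelevinsky order on multisegments; this yields both that the $V_\bfs$ are pairwise non-isomorphic and that they span the corresponding part of the Grothendieck group. Finally, Lemma~\ref{lem:Ro1}(1) realises an arbitrary irreducible as a quotient of some $M(\und{a})$, and the count --- matching the Kazhdan--Lusztig count --- that the number of irreducibles with a given support equals the number of multisegments with that support forces exhaustion, and hence the bijection with $\mathscr S_r$. I expect this last combinatorial input, separating inequivalent multisegments and matching cardinalities, to be the genuine obstacle, with no route to it independent of \cite{Ze,Ro1}.
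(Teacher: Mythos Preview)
The paper does not supply a proof of this proposition at all: it is stated with the attribution \cite[Proposition 4.4, Theorem 5.2]{Ro1} and used as a black box throughout. Your proposal, then, is not competing with any argument in the paper --- you are reconstructing Rogawski's proof, which is exactly what the citation points to, and your outline of (1)--(3) follows his line faithfully.

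One imprecision worth flagging: your ``if'' direction of (4) is too quick. A rearrangement $\bfs\thicksim\bfs'$ does \emph{not} carry $M(\bfs^\vee)$ isomorphically to $M(\bfs'^\vee)$ --- these principal series modules are generally non-isomorphic (the paper itself remarks just after the proposition that even $I_\bfs\not\cong I_{\bfs'}$ can occur). What one actually uses is that $I_\bfs$ and $I_{\bfs'}$ have the same composition factors in the Grothendieck group (Zelevinsky's commutativity of parabolic induction), and that the cell modules $E_{\mu(\bfs)}$ and $E_{\mu(\bfs')}$ are isomorphic because $\mu(\bfs)$ and $\mu(\bfs')$ are rearrangements of the same composition; the multiplicity-one statement then pins down $V_\bfs\cong V_{\bfs'}$. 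You already invoke the Zelevinsky--Rogawski apparatus for the ``only if'' direction and for (5), so this is a minor reorganisation rather than a missing idea.
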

%
%\begin{rem}
%The definition of $I_{\bfs}$ used here is different from the one in \cite{Ro1}.
%However, using \cite[Proposition 4.4]{Ro1} one can easily deduce that the modules defined via the two ways are isomorphic.
%\end{rem}

By Proposition~\ref{prop:Ro2}(3), isomorphism types of irreducible $\afHr$-modules are indexed by the set $\mathscr S_r$.
Thus, we will use the notation $V_\fks$ to denote a member in the isomorphism class labelled by
$\fks\in\mathscr S_r$.
We also note that, for $\bfs\sim\bfs'$, $I_\bfs$ is not necessarily isomorphic to $I_{\bfs'}$.
It is known from \cite{Ze} that if $\bar\bfs=\overline{\bfs'}=\fks$ then $I_\bfs$ and $I_{\bfs'}$ have the same composition factors including the unique $V_\fks$. We now look at a sufficient condition on $\bfs$ for $V_\fks$ appearing in the head of $I_\bfs$.

\begin{defn}\label{defn:order}
\begin{enumerate}
\item
Let segments
$\sfs_1=(aq^{2i_1},\ldots,aq^{2j_1})$ and $\sfs_2=(aq^{2i_2},\ldots,aq^{2j_2})$ be on the same line $L_a$.
We say that $\sfs_1$ {\em precedes} $\sfs_2$ and write $\sfs_1\preccurlyeq\sfs_2$
if either $j_1<j_2$ or  $j_1=j_2$ and $i_1\geq i_2$ (cf. \cite[Theorem 5.2]{Ro1}).

\item A {\it chain} on line $L_a$ is a sequence of segments $\sfs_1,\sfs_2,\ldots,\sfs_n$ on $L_a$ satisfying
$\sfs_{1}\preccurlyeq\sfs_{2}\preccurlyeq\ldots\preccurlyeq\sfs_{n}$. A sequence of segments $\bfs=(\sfs_1,\sfs_2,\ldots,\sfs_t)\in\mathscr C^t$ with $t\geq 1$ is said to be {\it standard} (relative to $\preccurlyeq$) if it can be obtained by juxtaposing chains $\bfc^{(1)},\ldots,\bfc^{(p)}$ on distinct lines $L_{a_1},\ldots,L_{a_p}$. We also write
$\bfs=\bfc^{(1)}\vee\cdots\vee\bfc^{(p)}.$

%Given $\bfs=(\sfs_1,\sfs_2,\ldots,\sfs_t)\in\mathscr C^t$ with $t\geq 1$,
%we say that $\bfs$ is {\em standard} (relative to $\preccurlyeq$) if
%there exist $0=k_0< k_1<\ldots< k_p= t$ and disjoint lines $L_{a_1}, L_{a_2},\ldots,L_{a_p}$
%with $a_1,\ldots,a_p\in\C^*$
%such that, for each $1\leq i\leq p$, the segments
%$\sfs_j$  for $k_{i-1}+1\leq j\leq k_i$ lie on the line $L_{a_i}$
%and $\sfs_{k_{i-1}+1}\preccurlyeq\sfs_{k_{i-1}+2}\preccurlyeq\ldots\preccurlyeq\sfs_{k_i}$.
%In this case, we set $\bfs^{(i)}=(\sfs_{k_{i-1}+1},\ldots,\sfs_{k_i})$ for $1\leq i\leq p$ and write
%$\bfs=\bfs^{(1)}\cup\bfs^{(2)}\cup\cdots\cup\bfs^{(p)}.$
\item For a segment $\sfs=(aq^{2i},\ldots,aq^{2j})$ lying on line $L_a$, define
 $\sfs^{-1}=(a^{-1}q^{-2j},\ldots,a^{-1}q^{-2i})$
 a segment lying on line $L_{a^{-1}}$.
A sequence $\bfs=(\sfs_1,\sfs_2,\ldots,\sfs_t)$ of segments is said to be {\em anti-standard}
if $\bfs^{-1}:=(\sfs_1^{-1},\sfs_2^{-1},\ldots,\sfs_t^{-1})$ is standard.
\end{enumerate}
\end{defn}
\begin{rem}
Following~\cite[Definition 3.1]{Ro2},
we can define the order $\sfs_1\preccurlyeq'\sfs_2$
if either $j_1<j_2$ or  $j_1=j_2$ and $i_1\leq i_2$ for two segments
$
\sfs_1=(aq^{2i_1},\ldots,aq^{2j_1})$ and $
\sfs_2=(aq^{2i_2},\ldots,aq^{2j_2})
$
on the same line $L_a$.
Analogous to Definition~\ref{defn:order}(2), one can define the standard sequence of segments with respect to the order $\preccurlyeq'$.
\end{rem}
\begin{prop}\label{prop:Ro3}
For $\fks\in\mathscr S_r$, if $\bfs\in\fks$ is standard (with respect to $\preccurlyeq$),
then $I_\bfs$ has irreducible head ${\rm hd}(I_\bfs)$ and
$$
V_{\fks}\cong{\rm hd}(I_{\bfs}).
$$
\end{prop}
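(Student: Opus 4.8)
The plan is to reduce the statement about the order $\preccurlyeq$ to the corresponding statement about the order $\preccurlyeq'$, which is already available as Proposition~\ref{standardprime}. A standard sequence $\bfs$ relative to $\preccurlyeq$ decomposes as $\bfs=\bfs^{(1)}\cup\cdots\cup\bfs^{(p)}$, where the segments of $\bfs^{(i)}$ all lie on the line $L_{a_i}$ and the lines $L_{a_1},\ldots,L_{a_p}$ are pairwise disjoint. Within each block $\bfs^{(i)}$ I would reorder the segments to obtain a sequence $(\bfs')^{(i)}$ that is standard with respect to $\preccurlyeq'$; since the underlying multiset of segments of $(\bfs')^{(i)}$ is the same as that of $\bfs^{(i)}$, this is always possible by the definition of the $\preccurlyeq'$-standard order. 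Setting $\bfs'=(\bfs')^{(1)}\cup\cdots\cup(\bfs')^{(p)}$, the new sequence $\bfs'$ lies in the same equivalence class $\fks$ as $\bfs$ and is standard with respect to $\preccurlyeq'$ (the block structure and the disjointness of the lines is unchanged).

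\textbf{Key steps.} First I would invoke Lemma~\ref{lem:induced}(2) together with the associativity of induction to write
$$
I_\bfs\cong{\rm ind}^{\mc H_\vtg(r)}_{\mc H_\vtg(\mu)}\bigl(I_{\bfs^{(1)}}\otimes\cdots\otimes I_{\bfs^{(p)}}\bigr),
\qquad
I_{\bfs'}\cong{\rm ind}^{\mc H_\vtg(r)}_{\mc H_\vtg(\mu)}\bigl(I_{(\bfs')^{(1)}}\otimes\cdots\otimes I_{(\bfs')^{(p)}}\bigr),
$$
where $\mu$ records the total lengths of the blocks. Next, for each $i$, both $\bfs^{(i)}$ and $(\bfs')^{(i)}$ consist of segments on a single line $L_{a_i}$, with $\bfs^{(i)}$ standard relative to $\preccurlyeq$ and $(\bfs')^{(i)}$ the reordering that is standard relative to $\preccurlyeq'$; Lemma~\ref{lem:twoorder} then gives $I_{\bfs^{(i)}}\cong I_{(\bfs')^{(i)}}$ as $\mc H_\vtg(r_i)$-modules. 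Feeding these isomorphisms into the two induced modules above yields $I_\bfs\cong I_{\bfs'}$ as $\afHr$-modules. Finally, since $\bfs'\in\fks$ is standard with respect to $\preccurlyeq'$, Proposition~\ref{standardprime} shows that $I_{\bfs'}$ has simple head with $V_\fks\cong{\rm hd}(I_{\bfs'})$, and transporting this across the isomorphism $I_\bfs\cong I_{\bfs'}$ completes the proof.

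\textbf{Main obstacle.} The delicate point is verifying that Lemma~\ref{lem:twoorder} applies to each block, i.e.\ that the reordering of $\bfs^{(i)}$ into the $\preccurlyeq'$-standard sequence $(\bfs')^{(i)}$ is exactly the permutation described in the proof of Lemma~\ref{lem:twoorder}. This requires comparing the two orders on segments of a fixed line: $\preccurlyeq$ orders first by the right endpoint $j$ and then by \emph{decreasing} left endpoint (i.e.\ increasing $i$ gives later position when $j$ is tied... more precisely $i_1\ge i_2$ means $\sfs_1\preccurlyeq\sfs_2$), whereas $\preccurlyeq'$ reverses the tie-breaking on $i$. So passing from $\preccurlyeq$-standard to $\preccurlyeq'$-standard within a line amounts to reversing each maximal run of segments sharing a common right endpoint, which is precisely the permutation handled by Lemma~\ref{lem:twoorder} via repeated application of Lemma~\ref{notlinked}. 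Once this combinatorial matching is spelled out, the rest is a formal assembly from the cited lemmas; I expect no further difficulty, though one should be careful that the disjointness of the lines $L_{a_i}$ guarantees there is no interaction between blocks, so that the block-wise isomorphisms indeed glue to a global one.
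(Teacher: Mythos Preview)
Your proposal is correct and follows essentially the same approach as the paper's proof: decompose $\bfs$ into blocks on distinct lines, reorder each block to be $\preccurlyeq'$-standard using Lemma~\ref{lem:twoorder}, glue the block-wise isomorphisms via Lemma~\ref{lem:induced}(2) and associativity of induction to get $I_\bfs\cong I_{\bfs'}$, and then invoke Proposition~\ref{standardprime}. Your discussion of the ``main obstacle'' correctly identifies the combinatorial content of Lemma~\ref{lem:twoorder}, though the paper simply cites that lemma without further comment.
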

\begin{proof}
Suppose $\bfs\in\mathscr C^t$ is standard with the chain decomposition $\bfs=\bfc^{(1)}\vee\cdots\vee\bfc^{(p)}$
in the sense of Definition~\ref{defn:order}(2).
Reorder the segments in $\bfc^{(i)}$ to obtain a chain $(\bfc^{(i)})'$ with respect to the order $\preccurlyeq'$ for $1\leq i\leq p$.
Then $\bfs':=(\bfc^{(1)})'\vee\cdots\vee(\bfc^{(p)})'$ is standard with respect to the order $\preccurlyeq'$.
Then by \cite[Proposition 5.2]{Ro2} one can show that $I_{\bfs}\cong I_{\bfs'}$ and so the result follows from \cite[Theorem 3.3]{Ro2}.
\end{proof}

Note that this result is not true in general for non-standard $\bfs$.
From now on, by standard sequence of segments we mean those satisfying the property given in
Definition~\ref{defn:order}(2).

\subsection{The category $\afHr$-${\sf mod}^{\Z}$,
integral multisegments and standard words.}
 Let $\afHr$-${\sf mod}^{\Z}$ denote the full subcategory of
finite dimensional left $\afHr$-modules on which
the eigenvalues of $X_1,\ldots,X_r$ are powers of $q^2$ (i.e., the weights are integral).
It is known (cf. \cite{Kl, Gr}) that, to understand irreducible $\afHr$-modules,
it suffices to study those irreducible modules in $\afHr$-${\sf mod}^{\Z}$.
More precisely, given a standard sequence $\bfs\in\mathscr C^t_r$ with
$t\geq 1$, and assume
$
\bfs=\bfc^{(1)}\vee\cdots\vee\bfc^{(p)}
$
in the sense of Definition~\ref{defn:order}(2).
Then it is known \cite[\S 6.2]{Gr} (cf. \cite[Lemma 6.1.2]{Kl}) that ${\rm ind}^{\afHr}_{\mc H_{\vtg}(\mu)}(V_{\bfc^{(1)}}\otimes\cdots\otimes V_{\bfc^{(p)}})$
is irreducible and hence, by Proposition~\ref{prop:Ro3}, the following holds
\begin{equation}\label{gen&int}
V_\bfs\cong{\rm ind}^{\afHr}_{\mc H_{\vtg}(\mu)}(V_{\bfc^{(1)}}\otimes\cdots\otimes V_{\bfc^{(p)}}),
\end{equation}
where $\mu=(\mu_1,\ldots,\mu_p)$ with $\mu_i=\sum_j|\sfs^{(i)}_j|$.
Therefore, it is reduced to study the irreducible modules $V_{\bfs}$ for a segment chain
$\bfs$ on a line $L_a$.

We now further reduce the line $L_a$ to the line $L_1$ so that only integral multisegments will be considered.

 For $a\in\C^*$, let $\sigma_a$ be the automorphism of $\afHr$ given by
\begin{equation}\label{sigma}
\sigma_a(T_i)=T_i,\quad \sigma_a(X_k)=aX_k
\end{equation}
for $1\leq i\leq r-1$ and $1\leq k\leq r$.
For an $\afHr$-module $V$, we can twist the action with $\sigma_a$ to get a
new module denoted by $V^{\sigma_a}$.
Given an arbitrary segment $\sfs=(zq^{2i},\ldots,zq^{2j})$ and $a\in \C^*$, set
\begin{equation}\label{s-sigma}
\sfs^{\sigma_a}=a\sfs=(azq^{2i},\ldots,azq^{2j}).
\end{equation}
Similarly we can define $\bfs^{\sigma_a}$ and $\mfs^{\sigma_a}$
for $\bfs\in\mathscr C_r^t$ and $\mfs\in\mathscr S_r$.
Then by \eqref{Is} we obtain $I^{\sigma_a}_{\bfs}\cong I_{\bfs^{\sigma_a}}$ and hence
\begin{equation}\label{Vs-sigma}
V_{\mfs}^{\sigma_a}\cong V_{\mfs^{\sigma_a}}.
\end{equation}
using Proposition~\ref{prop:Ro2} and the $\Ha$-module isomorphism $V^{\sigma_a}_{\bfs}|_{\Ha}\cong V_{\bfs}|_{\Ha}$.
%This implies
%$$
%V_{\mfs}\cong (V_{\mfs^{\sigma_{a^{-1}}}})^{\sigma_a}.
%$$
%Now assume that $\bfs=(\sfs_1,\ldots,\sfs_t)$ with $\sfs_k$ being on the same line $L_a$ for some $a\in\C^*$.
%Then it is easy to check that $\sfs_k^{\sigma_{a^{-1}}}$ is integral for $1\leq k\leq n$
%and moreover by ~\eqref{Vs-sigma}
%$$
%V_{\bfs}\cong (V_{\bfs^{\sigma_{a^{-1}}}})^{\sigma_a}.
%$$
Hence, to understand irreducible modules $V_{\bfs}$, it is enough to
consider those sequence of segments $\bfs=(\sfs_1,\sfs_2,\ldots,\sfs_t)\in\mathscr C_r^t$ with all $\sfs_i$ lying on $L_1$.
A segment lying on $L_1$ is called an {\it integral segment}.

Define $\mathscr C_r^{\Z}$ to be the set of {\it standard sequences of integral segments} of total length $r$. Thus, every element in $\mathscr C_r^{\Z}$ is necessarily a chain of integral segments. Define similarly
the set of all {\it integral multisegments} of total length $r$
$$\mathscr S^{\Z}_{r}=\{\bar\bfs\in\mathscr S_r\mid \text{every segment in }\bfs \text{ lies on }L_1\}.$$

 % denotes the set of multisegments $\bar{\bfs}$ such that the segments in $\bfs$ are of the form%$$\sfs=(q^{2i},q^{2(i+1)},\ldots,q^{2j})$ for $i,j\in\Z$ and $i\leq j$.
For any multisegment $\mathfrak s\in \mathscr S^{\Z}_r$, there exists a unique chain $\bfs$ such that $\bfs\in\mathfrak s$. Thus, the map
\begin{equation}
\aligned
 \flat_1:\mathscr C_r^{\Z}&\longrightarrow \mathscr S_r^{\Z},\quad
 \bfs\longmapsto\bar\bfs\endaligned
\end{equation}
is a bijection.
  We now use this fact to describe integral multisegments in terms of
 standard words.

Consider the set $\mc W$ of nonempty words in the alphabet $\mathbb Z_\leq^2=\{{j\choose i}\mid i,j\in\mathbb Z, i\leq j\}$. We always identify such a word ${j_1\choose i_1}\cdots{j_t\choose i_t}$ as ${{j_1\ldots j_t}\choose{i_1\ldots i_t}}$ and define
$\bigl|{{j_1\ldots j_t}\choose{i_1\ldots i_t}}\bigr|:=\sum_{k=1}^t(j_k-i_k+1).$
  Let
 $$\mc W(r)=\{w={{j_1\ldots j_t}\choose{i_1\ldots i_t}}\in\mc W\mid i_k\leq j_k, 1\leq k\leq t,  |w|=r\}.$$
Clearly, each word in $\mc W(r)$ defines an integral multisegment.  A word
${{j_1\ldots j_t}\choose{i_1\ldots i_t}}$ in $\mc W(r)$ is called a {\it standard word} if $j_1\leq j_2\leq\ldots\leq j_{t}$ and $i_k\geq i_{k+1}\text{ whenever }j_k=j_{k+1}$.
 Let $\mc W^s(r)$ be the set of standard words in $\mc W(r)$.

Clearly, by definition, there is a bijection
\begin{equation}\label{flat2}
\flat_2:\mc W^s(r)\longrightarrow\mathscr C_r^{\Z}
\end{equation}
sending a standard word ${{j_1j_2\ldots j_t}\choose{i_1i_2\ldots i_t}}$ to a chain %standard sequence
$\bfs=(\sfs_1,\sfs_2,\ldots,\sfs_t)$, where,  for $1\leq k\leq t$,
$\sfs_k=(q^{2i_k},q^{2(i_k+1)},\ldots,q^{2j_k})$. This induces a bijection
\begin{equation}\label{flat}
\aligned
\flat=\flat_1\circ\flat_2:\mc W^s(r)&\longrightarrow\mathscr S^{\Z}_r,\quad
w\longmapsto \overline{\flat_2(w)}.\endaligned
\end{equation}

We remark that the reduction to the integral case allows us to link irreducible modules $V_{\mathfrak s}$ for $\mathfrak s\in\mathscr S^{\Z}_r$ with irreducible modules over the Ariki-Koike algebras of integral type considered in \S3.5.

%
%\begin{cor}
%Let $\bfs=\{\sfs_1,\ldots,\sfs_t\}\in\mathscr S_r$.
%Assume that $\bfs=\bfs^0$.
%Then the following holds:
%$$
%V_{\bfs}\cong {\rm hd}\big({\rm ind}^{\afHr}_{\mc H_{\vtg}(\mu(\bfs))}
%\C_{\widetilde{\sfs_1}}\otimes\cdots\otimes\C_{\widetilde{\sfs_t}}\big).
%$$
%\end{cor}
%%

%%
%%
\section{Ariki-Koike algebras and Specht modules}
In this section, we shall recall the construction
of Specht modules for Ariki-Koike algebras (cf. \cite{DJM, DR2}), compute the action of Jucys--Murphy elements on Specht modules, and introduce Kleshchev multipartitions for Ariki--koike algebras of integral type.

\subsection{Basics on multipartitions.}\label{subsec:notation}
Let $\mu=(\mu_1,\mu_2,\ldots,\mu_\ell)$ be a composition of $r$.
If, in addition, $\mu_1\geq\mu_2\geq\cdots\geq\mu_\ell$,
then $\mu$ is called a partition of $r$ and write $\mu\vdash r$.
Denote by $\mc P(r)$ the set of partitions of $r$.
Given a partition $\la$, its Young diagram
is known to be the set of points $\{(i,j)|1\leq i\leq\ell(\la), 1\leq j\leq\la_i\}
\subseteq\N^2$ and the elements $(i,j)$ are called {\em nodes}. Here $\ell(\la)$ denotes the number of nonzero parts of $\la$. The conjugate of $\la$ is the partition $\la'$ associated with the transpose of the Young diagram of $\la$.
%Given a composition $\mu$, denote by $\mu'=(\mu_1',\mu_2',\ldots)$ its conjugate.
An $m$-tuple of partitions $\und{\la}=(\la^{(1)},\la^{(2)},\ldots,\la^{(m)})$
with $|\la^{(1)}|+\cdots+|\la^{(m)}|=r$ is called
a multipartition of $r$. Call $\udla$ a {\it sincere} multipartition if $|\la^{(i)}|>0$ for all $i$.
%Define multicomposition similarly.
Denote by $\mc P_m(r)$ the set of multipartitions
$\und{\la}=(\la^{(1)},\la^{(2)},\ldots,\la^{(m)})$ of $r$.
A multipartition $\und{\la}=(\la^{(1)},\ldots,\la^{(m)})$
can also be identified with its Young diagram
which is formally defined as the set
$\{(i,j,k)|1\leq i\leq\ell(\la^{(k)}), 1\leq j\leq\la_i^{(k)}, 1\leq k\leq m\}
\subseteq\N^3.$ The elements $(i,j,k)$ are called nodes.
Similarly, a $\und{\la}$-tableau is a labeling of the nodes in the diagrams
with numbers $1,2,\ldots, r$.
Let $t^{\und{\la}}$ (resp. $t_{\und{\la}}$) be the $\und{\la}$-tableau in
which the numbers $1,2,\ldots, r$ appear
in order along successive rows (resp. columns) in the first (resp. last) diagram of $\und{\la}$,
then in the second (resp. second last) diagram and so on.
%Meanwhile let $t_{\und{\la}}$ be the $\und{\la}$-tableau
%in which the numbers $1,2,\ldots, r$ appear
%in order along successive columns in the last diagram of $\und{\la}$,
%then in the second last diagram and so on.
For example, in the case $\und{\la}=((3,1), (2,2), (1))$,
we have
\begin{equation} \label{tableau}
\aligned
t^{\und{\la}}=\Big(~\young(123,4),\quad\young(56,78),\quad\young(9)~\Big),\quad
t_{\und{\la}}=\Big(~\young(689,7),\quad\young(24,35),\quad\young(1)~\Big).
\endaligned
\end{equation}

Given $\und{\la}\in\mc P_m(r)$, let
$$
\und{\la}'=(\la^{(m)'},\ldots,\la^{(2)'},\la^{(1)'})\quad (\text{resp.}, \und{\la}^{\vee}=\la^{(1)}\vee\la^{(2)}\vee\cdots\vee\la^{(m)})
$$
be its conjugate (resp.,  the composition of $r$ obtained by concatenating the components of $\und{\la}$).
Associated to $\und\la$, define the elements $w_{\und{\la}},w_{[\und{\la}]}\in\mf S_r$ by
\begin{equation} \label{lem:wudla}w_{\und{\la}} t^{\und{\la}}=t_{\und{\la}},\quad
w_{[\udla]}(a_{j-1}+b)
=r-a_j+b
\end{equation}
where $a_0=0, a_k=\sum^k_{j=1}|\la^{(j)}|$ for $1\leq k\leq m$, $1\leq b\leq |\la^{(j)}|$ and $
[\und{\la}]=[a_0,a_1,\ldots,a_m]
$.
Then we have
\begin{equation}\label{wudla-1}
w_{\und{\la}}^{-1}=w_{\und{\la}'}\text{ and } w_{\und{\la}}=w^{(1)}\cdots w^{(m)}w_{[\und{\lambda}]}
\end{equation}
where $w^{(i)}$ is the permutation sending the $i$th tableau of $w_{[\lambda]}t^{\und{\la}}$ to the $i$th tableau of $t_{\und{\la}}$; see \cite[(1.4)]{DR2}.

\subsection{\bf The Ariki-Koike algebra $\AKa$ and Jucys-Murphy elements.}
Recall $q\in\C^*$.
Let $\und{u}=(u_1, u_2,\ldots, u_m)\in\C^m$ with $m\geq 1$.
The {\em Ariki-Koike} algebra $\AKa$ is
the associative algebra over $\C$ generated by
$T_0, T_1,\ldots, T_{r-1}$ subject to the relations:
\begin{align*}
(T_i-q^2)(T_i+1)&=0,\quad 1\leq i\leq r-1, \\
T_iT_{i+1}T_i&=T_{i+1}T_iT_{i+1},\quad 1\leq i\leq r-2,\\
\quad T_iT_j&=T_jT_i,\quad 1\leq i,j\leq r-1,  |i-j|> 1, \\
(T_0-u_1)(T_0-u_2)&\cdots (T_0-u_m)=0,\\
T_0T_1T_0T_1&=T_1T_0T_1T_0.
\end{align*}
The {\em Jucys-Murphy} elements $L_k$ $(1\leq k\leq r)$ of the Ariki-Koike algebra $\AKa$
are defined by
\begin{equation}\label{JM2}
L_1=T_0,\quad L_k=q^{2(1-k)}T_{k-1}\cdots T_1T_0T_1\cdots T_{k-1}=q^{-2}T_{k-1}L_{k-1}T_{k-1}
\end{equation}
for $2\leq k\leq r$.
In the case $m=1$ and $u_1=1$, we write
$$J_k=L_k=q^{2(1-k)}T_{k-1}\cdots T_1T_1\cdots T_{k-1},\quad1\leq k\leq r.$$
It is easy to check that the Jucys-Murphy elements commute with each other.
%By taking advantage of these elements, Ariki-Koike algebras can be regarded as
%quotients of affine Hecke algebras as follows, generalising the evaluation map \eqref{ev}.

If $u_1,\ldots,u_m$ are all non-zero,\footnote{This guarantees that all $L_k$ are invertible since all $X_k$ are invertible in $\afHr$.}
there exists a surjective homomorphism
\begin{equation}\label{pi}
\varpi_{\udu}: \afHr\twoheadrightarrow\AKa,\quad
X_k\mapsto L_k,\quad T_i\mapsto T_i
\end{equation}
for $1\leq k\leq r$ and $1\leq i\leq r-1$ and
$\varpi_{\udu}$ induces the algebra isomorphism
$$
\afHr/\langle (X_1-u_1)(X_1-u_2)\cdots(X_1-u_m)\rangle\overset\sim\longrightarrow \AKa.
$$
This means that any $\AKa$-module $M$ can be {\it inflated } to an $\afHr$-module in this case. We will use the same letter to denote its inflation.

A direct check of relations shows that there exists an anti-automorphism
\begin{equation} \label{tau}
\tau:\AKa\longrightarrow\AKa,
\end{equation}
defined by sending $T_i$ to $T_i$ for $0\leq i\leq r-1$.

\subsection{\bf Specht modules for $\AKa$.}
We shall mainly follow the constructions of Specht modules
for the Hecke algebra $\Ha$
and the Ariki-Koike algebras $\AKa$ established in \cite{DJ1,DJ2,DR1,DR2},
where right modules are considered.
%Observe that by applying the anti-automorphism $\tau$,
%the definitions and statements therein
%can be reformulated for left modules as follows.

Given a composition $\mu=(\mu_1,\mu_2,\ldots,\mu_{\ell})$ of $r$, set
$$
x_{\mu}:=\sum_{\sigma\in \mf S_{\mu}}T_{\sigma},\quad
y_{\mu}:=\sum_{\sigma\in\mf S_\mu}(-q^2)^{l(\sigma)}T_{\sigma}.
$$
It is easy to check that
\begin{align}\label{xylafin}
T_\sigma x_\mu=x_{\mu}T_\sigma=q^{2l(\sigma)}x_{\mu},\quad
T_\sigma y_\mu=y_{\mu}T_\sigma=(-1)^{l(\sigma)}y_{\mu}
\end{align}
for $\sigma\in\mf S_\mu$.
%Then $\Ha x_{\mu}$ (resp. $\Ha y_{\mu}$) can be regarded as
%the $q$-analog of the permutation module
%${\rm ind}^{\C \mf S_r}_{\C\mf S_{\mu}}{\bf 1}$
%(resp. ${\rm ind}^{\C \mf S_r}_{\C\mf S_{\mu}}{{\bf 1}^-}$),
%where ${\bf 1}$ and ${\bf 1}^-$ denote the trivial and sign representations of $\mf S_r$, respectively.
For $\la\in\mc P(r)$, define $z_{\la}:=y_{\la'}T_{w_{\la'}}x_{\la}$ and the Specht module $S^\la=\sH(r)z_\la$.
%Then by~\eqref{xylafin}, we obtain
%$$
%T_\sigma z_\la=(-1)^{l(\sigma)}z_\la
%$$
%for $\sigma\in\mf S_{\la'}$.
%Observe that by~\eqref{tau} and \eqref{wlainverse}, we have
%$$
%\tau(x_\la)=x_\la, \tau(y_\la)=y_\la, \tau(T_{w_\la})=T_{w_\la^{-1}}=T_{w_{\la'}}.
%$$
%The {\em Specht} module \cite{DJ1} associated to a partition $\la$ of $r$ is the left
%ideal $S^{\la}$ of $\Ha$ generated by $z_{\la}$, that is,
%\begin{equation}\label{Specht1}
%S^{\la}=\Ha z_{\la}.
%\end{equation}
%We remark that by \eqref{wlainverse} our $z_\la$ is exactly the element obtained by apply
%$\tau$ to the elements also denoted by $z_\la$ in \cite[P.~34]{DJ1},
%where right modules are studied.

Following \cite{DR2}, we shall introduce the notion of Specht modules
for the Ariki-Koike algebras $\AKa$ in the rest of this section.
For $u\in\C$ and a positive integer $k$, let
$$
\pi_0(u)=1,\quad \pi_k(u)=(L_1-u)(L_2-u)\cdots(L_k-u).
$$
Writing $[\und{\lambda}]=[a_0,a_1,\ldots,a_m]$ with $a_0=0$ and $a_k=\sum^k_{j=1}|\lambda^{(j)}|$ with $1\leq k\leq m$ for a multipartition
$\und{\lambda}=(\lambda^{(1)},\ldots,\lambda^{(m)})$ of $r$, we define (cf. \cite{DJM, DR1,GL})
\begin{align*}
\pi_{[\und{\la}]}=\pi_{a_1}(u_2)\cdots\pi_{a_{m-1}}&(u_m),\quad
\widetilde{\pi}_{[\und{\la}]}=\pi_{a_1}(u_{m-1})\cdots\pi_{a_{m-1}}(u_1),\\
v_{[\und{\la}]}&=\widetilde{\pi}_{[\und{\la}']}T_{w_{[\und{\la}']}}\pi_{[\und{\la}]}.
\end{align*}
For a multipartition $\und{\la}$ of $r$, let
$$\aligned
x_{\und{\la}}=\pi_{[\und{\la}]}x_{\und{\la}^\vee}=x_{\und{\la}^{\!\vee}}\,\pi_{[\und{\la}]},\quad
y_{\und{\la}}&=\widetilde{\pi}_{[\und{\la}]}y_{\und{\la}^\vee}=y_{\und{\la}^\vee}\widetilde{\pi}_{[\und{\la}]},\quad
z_{\und{\la}}=y_{\und{\la}'}T_{w_{\und{\la}'}}x_{\und{\la}},\\
S^{\und{\la}}&=\AKa z_{\und{\la}}.
\endaligned
$$
The left ideal $S^{\und{\la}}$ of $\AKa$ generated by $z_{\und{\la}}$ is called the {\it cyclotomic Specht module}  associated to a multipartition $\und{\la}\in\mc P_m(r)$. By~\eqref{xylafin}, the following holds for $\sigma\in\mf S_{(\udla')^\vee}$:
\begin{equation}\label{Tzudla}
T_{\sigma}z_{\udla}=(-1)^{l(\sigma)}z_{\udla}.
\end{equation}

%Again, the element $z_{\und{\la}}$ defined here is the one obtained by applying the anti-automorphism
%$\tau$ to the one also denoted by $z_{\udla}$ constructed in \cite[Definition 2.1]{DR2}
%since $\tau$ satisfies
%$$
%\tau(T_{w_{\udla}})=T_{w_{\und{\la}'}}
%$$
%due to~\eqref{wudla-1}.
%It is known \cite{DR2} that the Specht module $S^{\und{\la}}$ defined above
%is isomorphic to those cell modules defined in \cite{DJM}.

%\begin{lem}[{\cite[Proposition 3.1]{DR1}}] \label{lem:T&JM}
%\label{lem:DR1}
%Suppose $\bfa\in\La[m,r]$ and let $\mf S_{\bfa}$ be the Young subgroup of $\mf S_r$
%associated to the composition $(a_1, a_2-a_1,\ldots, a_m-a_{m-1})$ of $r$. Then
%\begin{enumerate}
%\item $T_k v_{\bfa}=v_{\bfa} T_{w_{\bfa}^{-1}(k)} $
%for $1\leq k\leq r$ such that $s_k\in\mf S_{\bfa}$.

%\item $L_k v_{\bfa}=u_jv_{\bfa}$ for $1\leq k\leq r$
%satisfying $k=r-a_j+1$ for some $j$ with $a_{j-1}<a_j$.
%\end{enumerate}
%\end{lem}
%Observe that, if $\bfa=[\udla]$, then Lemma~\ref{lem:T&JM} implies  (cf. \eqref{lem:wudla})
%\begin{align}
%L_{r-a_j+1}v_{[\und{\la}]}
%=&u_jv_{[\und{\la}]}, \label{Lvla}\\
%T_{r-a_j+b}v_{[\und{\la}]}
%=&v_{[\und{\la}]}T_{a_{j-1}+b}\label{Tvla}
%\end{align}
%for $1\leq b\leq |\la^{(j)}|$ and $1\leq j\leq m$ with $|\la^{(j)}|\geq 1$.

We remark that, by \cite[Theorem 2.9]{DR2} , the Specht module $S^{\und{\la}}$ defined above
is isomorphic to the cell modules associated with $\udla'$ defined in \cite{DJM}.
% up to a twist by an automorphism of $\AKa$.
The following lemma will be used later on.
\begin{lem}[{\cite[Corollary 2.3]{DR2}}]\label{lem:Specht}
Suppose that $\und{\la}=(\la^{(1)},\ldots,\la^{(m)})$ is a multipartition of $r$
with  $|\la^{(k)}|=r_k$ for $1\leq k\leq m$.
Then
\begin{equation}\label{zv}
z_{\und{\la}}=v_{[\und{\la}]}(z_{\la^{(1)}}\otimes\cdots \otimes z_{\la^{(m)}})
=(z_{\la^{(m)}}\otimes\cdots \otimes z_{\la^{(1)}})v_{[\und{\la}]},
\end{equation}
where $z_{\la^{(1)}}\otimes\cdots \otimes z_{\la^{(m)}}
\in\mc H(r_1)\otimes\cdots\otimes\mc H(r_m)\subseteq\Ha$.
Moreover, as $\Ha$-modules, the following holds
\begin{equation}\label{slaind}
S^{\und{\la}}|_{\Ha}\cong {\rm ind}^{\Ha}_{\mc H(r_m)\otimes\cdots\otimes\mc H(r_1)}
(S^{\la^{(m)}}\otimes \cdots\otimes S^{\la^{(1)}}).
\end{equation}
%Moreover the following holds
%$$
%S^{\und{\la}}=\Ha z_{\und{\la}}
%$$
%as vector spaces.
\end{lem}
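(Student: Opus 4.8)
The plan is to prove the two displayed identities \eqref{zv} and \eqref{slaind} by reducing everything to the factorization $z_{\und\la} = y_{\und\la'}T_{w_{\und\la'}}x_{\und\la}$ together with the explicit intertwining properties of $v_{[\und\la]}$ recorded in Lemma~\ref{lem:T&JM}. First I would rewrite $x_{\und\la}$ and $y_{\und\la'}$ in factored form. By definition $x_{\und\la} = \pi_{[\und\la]}x_{\und\la^\vee}$, and since $\mf S_{\und\la^\vee} = \mf S_{r_1}\times\cdots\times\mf S_{r_m}$ acting on the first $r_1$ letters, then the next $r_2$, etc., we have $x_{\und\la^\vee} = x_{r_1}\otimes\cdots\otimes x_{r_m}$ where $x_{r_k}$ involves only $T_i$ with $i$ in the $k$-th block; likewise $y_{(\und\la')^\vee} = y_{r_m'}\otimes\cdots\otimes y_{r_1'}$ in the appropriate blocks (here I am abusing notation for the partition-components). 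The element $v_{[\und\la]} = \widetilde\pi_{[\und\la]'}T_{w_{[\und\la]'}}\pi_{[\und\la]}$ is exactly the "block-reversing" element, so the key point is that conjugating a tensor $a_1\otimes\cdots\otimes a_m$ supported on consecutive blocks by $w_{[\und\la]}$ (equivalently, pushing it past $v_{[\und\la]}$) reverses the order of the factors. This is precisely what \eqref{Tvla} encodes: $T_{r-a_j+b}v_{[\und\la]} = v_{[\und\la]}T_{a_{j-1}+b}$, so any element of $\mc H(r_m)\otimes\cdots\otimes\mc H(r_1)$ written in the "reversed" blocks equals $v_{[\und\la]}$ times the corresponding element of $\mc H(r_1)\otimes\cdots\otimes\mc H(r_m)$ times $v_{[\und\la]}^{-1}$, and a dimension/length count shows $v_{[\und\la]}$ is (up to the $\pi$-factors) the permutation $T_{w_{[\und\la]}}$ itself.

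Next I would establish \eqref{zv}. Starting from $z_{\und\la} = y_{\und\la'}T_{w_{\und\la'}}x_{\und\la}$, I substitute the factored forms of $y_{\und\la'}$ and $x_{\und\la}$, absorb the $\pi$ and $\widetilde\pi$ factors into $v_{[\und\la]}$ via the relations $L_{r-a_j+1}v_{[\und\la]} = u_j v_{[\und\la]}$ (which is \eqref{Lvla}), and then observe that $T_{w_{\und\la'}}$, once the block structure is accounted for, splits as $T_{w_{[\und\la']}}$ times a product of the intra-block permutations $T_{w_{\la^{(k)'}}}$; this uses the identity $w_{\und\la} = w_{[\und\la]}\cdot(\text{product of } w_{\la^{(k)}}\text{ in blocks})$ which follows from the tableau description of $t^{\und\la}$ and $t_{\und\la}$ in \S\ref{subsec:notation} together with \eqref{wudla-1}. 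Collecting terms, the intra-block pieces assemble into $z_{\la^{(1)}}\otimes\cdots\otimes z_{\la^{(m)}}$ sitting to the right of $v_{[\und\la]}$, giving the first equality in \eqref{zv}; pushing $v_{[\und\la]}$ through to the other side using \eqref{Tvla} (which reverses the blocks) gives the second equality $z_{\und\la} = (z_{\la^{(m)}}\otimes\cdots\otimes z_{\la^{(1)}})v_{[\und\la]}$. This is essentially the content of \cite[Corollary 2.3]{DR2} after applying the anti-automorphism $\tau$, so I would also remark that one may alternatively just transport the cited result through $\tau$ using $\tau(T_{w_{\und\la}}) = T_{w_{\und\la'}}$.

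Finally, for \eqref{slaind}: by definition $S^{\und\la} = \AKa z_{\und\la}$, and restricting to $\Ha$ we get $S^{\und\la}|_{\Ha} = \Ha z_{\und\la}$ once we know $\AKa z_{\und\la} = \Ha z_{\und\la}$ as $\Ha$-modules, which holds because $L_k z_{\und\la}$ lies in $\Ha z_{\und\la}$ (the $L_k$ act by the triangular relations \eqref{JM2} and, on $v_{[\und\la]}$, scalarly by \eqref{Lvla}), so the $T_0$-action is absorbed. Using the second form in \eqref{zv}, $z_{\und\la} = (z_{\la^{(m)}}\otimes\cdots\otimes z_{\la^{(1)}})v_{[\und\la]}$, and the fact that $v_{[\und\la]}$ is a unit times $T_{w_{[\und\la]}}$ with $w_{[\und\la]}$ a minimal-length double coset representative, right multiplication by $v_{[\und\la]}$ gives an $\Ha$-module isomorphism from $\Ha(z_{\la^{(m)}}\otimes\cdots\otimes z_{\la^{(1)}})$ onto $\Ha z_{\und\la}$; and $\Ha(z_{\la^{(m)}}\otimes\cdots\otimes z_{\la^{(1)}}) = \Ha\otimes_{\mc H(r_m)\otimes\cdots\otimes\mc H(r_1)}(S^{\la^{(m)}}\otimes\cdots\otimes S^{\la^{(1)}})$ by the standard fact that inducing a left ideal tensor product from a Young subalgebra yields the left ideal generated by the same element. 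The main obstacle I anticipate is bookkeeping: keeping straight which blocks the factors $x_{r_k}$, $y_{r_k'}$, $z_{\la^{(k)}}$ occupy and verifying that $v_{[\und\la]}$ (with its $\pi$-decorations) genuinely effects the block reversal without leaving stray scalar or length corrections — this is routine but error-prone, and the cleanest route is to lean directly on Lemma~\ref{lem:T&JM} and \cite[Corollary 2.3]{DR2} rather than recomputing from scratch.
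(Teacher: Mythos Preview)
The paper gives no proof of this lemma at all: it is quoted verbatim from \cite[Corollary~2.3]{DR2} (transported to left modules via the anti-automorphism $\tau$), so there is nothing in the paper to compare your argument against. Your sketch is in fact a reasonable outline of how \cite{DR2} establishes the result, using the block decomposition of $w_{\und\la}$ and the intertwining relations of Lemma~\ref{lem:T&JM}.

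One step needs correction. In deducing \eqref{slaind} you write that ``$v_{[\und\la]}$ is a unit times $T_{w_{[\und\la]}}$'', and use this to conclude that right multiplication by $v_{[\und\la]}$ is an $\Ha$-isomorphism from $\Ha(z_{\la^{(m)}}\otimes\cdots\otimes z_{\la^{(1)}})$ onto $\Ha z_{\und\la}$. This is not true: the factors $\pi_{[\und\la]}$ and $\widetilde\pi_{[\und\la]'}$ are products of elements $L_k-u_j$, which are zero-divisors in $\AKa$, not units. So surjectivity is clear from \eqref{zv}, but injectivity does not follow from invertibility of $v_{[\und\la]}$. The clean fix is a dimension count: one knows independently (e.g.\ from the cellular/standard basis of $S^{\und\la}$ in \cite{DJM,DR2}) that $\dim S^{\und\la}=\frac{r!}{r_1!\cdots r_m!}\prod_k\dim S^{\la^{(k)}}$, which equals the dimension of the induced module on the right of \eqref{slaind}; hence the surjection is an isomorphism. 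With this adjustment your argument goes through.
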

%%

%\section{Irreducible $\AKa$-modules}

\subsection{\bf The action of Jucys-Murphy elements on $S^{\udla}$.}  For later use, we now study the action of Jucys-Murphy elements on cyclotomic
Specht modules.% for Ariki-Koike algebras.

Given $\la\in\mc P(r)$ and a node $\rho=(i,j)\in\la$,  define the residue ${\rm res}(\rho)$ of $\rho$ relative to $q$
by ${\rm res}(\rho):=q^{2(j-i)}.$ Accordingly, for a $\la$-tableau $t$ and $1\leq k\leq r$,
let ${\rm res}_t(\rho)$ be the residue of the node occupied by
$k$ in $t$. Generally, for $\und{\la}=(\la^{(1)},\la^{(2)},\ldots,\la^{(m)})\in\mc P_m(r)$
and $\rho=(i,j,k)\in\und{\la}$, define its residue relative to
the given parameters $\{q,u_1,\ldots,u_m\}$
by
$$
{\rm res}(\rho):=u_kq^{2(j-i)}.
$$
For a $\und{\la}$-tableau $t=(t^{(1)},t^{(2)},\ldots,t^{(m)})$ and $1\leq k\leq r$, let
${\rm res}_t(k)$ be the residue of the node occupied by $k$ in $t$.
For example, if $t=t_{\udla}$ as given in \eqref{tableau}, then ${\rm res}_t(4)=u_2q^2$.
\begin{prop}\label{prop:JMact}
The following holds for any multipartition
$\und{\la}=(\la^{(1)},\ldots,\la^{(m)})$ of $r$:
$$
L_kz_{\und{\la}}
={\rm res}_{t_{\und{\la}}}(k)z_{\und{\la}}, \quad 1\leq k\leq r.
$$
\end{prop}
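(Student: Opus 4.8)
The strategy is to reduce the statement to the single-partition case (Corollary~\ref{cor:finJMact}) by exploiting the factorization of $z_{\und{\la}}$ given in Lemma~\ref{lem:Specht}, together with the relations~\eqref{Lvla} and~\eqref{Tvla} describing how $L_k$ and $T_k$ interact with $v_{[\und{\la}]}$. Write $r_k=|\la^{(k)}|$ and $a_k=r_1+\cdots+r_k$ so that $\bfa=[\und{\la}]=[a_0,a_1,\ldots,a_m]\in\La[m,r]$. By~\eqref{zv} we have $z_{\und{\la}}=v_{[\und{\la}]}(z_{\la^{(1)}}\otimes\cdots\otimes z_{\la^{(m)}})$, where the tensor factor lies in $\mc H(r_1)\otimes\cdots\otimes\mc H(r_m)\subseteq\Ha$. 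The plan is to fix $k$ with $1\leq k\leq r$, locate the unique $j$ with $a_{j-1}<k-?\le a_j$ after transport through $w_{[\und{\la}]}^{-1}$, and compute $L_k z_{\und{\la}}$ by moving $L_k$ across $v_{[\und{\la}]}$.

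\emph{First}, I would observe that via~\eqref{lem:wudla} the permutation $w_{[\und{\la}]}$ sends the block of indices $\{a_{j-1}+1,\ldots,a_j\}$ (positions in the $j$-th component, reading along rows) to the block $\{r-a_j+1,\ldots,r-a_{j-1}\}$; equivalently $w_{[\und{\la}]}^{-1}$ carries the index $r-a_j+b$ back to $a_{j-1}+b$. Given $k$, there is a unique $j$ and $1\le b\le r_j$ with $k=r-a_j+b$, i.e. $k$ lies in the image block of the $j$-th component. I would then use the identity $L_k=q^{-2}T_{k-1}L_{k-1}T_{k-1}$ from~\eqref{JM2} iteratively, or more cleanly argue by the commutation relations together with Lemma~\ref{lem:T&JM}: since $s_{r-a_j+b'}\in\mf S_{\bfa}$ for $1\le b'\le r_j-1$, repeated application of Lemma~\ref{lem:T&JM}(1), i.e.\ $T_\ell v_{\bfa}=v_{\bfa}T_{w_{\bfa}^{-1}(\ell)}$, lets me transport the whole string of $T$'s defining $L_k=q^{2(1-k)}T_{k-1}\cdots T_1 T_0 T_1\cdots T_{k-1}$ past $v_{[\und{\la}]}$. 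The key point is that conjugating $L_k$ by $v_{[\und{\la}]}$ converts it into $u_j\cdot(\text{the Hecke-algebra Jucys--Murphy element }J_b\text{ acting in the }j\text{-th tensor slot})$: the ``base point'' $T_0$ contributes the scalar $u_j$ by~\eqref{Lvla} (namely $L_{r-a_j+1}v_{[\und{\la}]}=u_j v_{[\und{\la}]}$), while the remaining $T$'s are transported by~\eqref{Tvla} into the $T$'s that build the Hecke Jucys--Murphy element $J_b$ of~\eqref{JM1} inside $\mc H(r_j)$.

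\emph{Second}, having obtained $L_k z_{\und{\la}}=u_j\, v_{[\und{\la}]}\,(1\otimes\cdots\otimes J_b\otimes\cdots\otimes 1)(z_{\la^{(1)}}\otimes\cdots\otimes z_{\la^{(m)}})$ with $J_b$ in the $j$-th slot, I apply Corollary~\ref{cor:finJMact} to the partition $\la^{(j)}$: $J_b z_{\la^{(j)}}={\rm res}_{t_{\la^{(j)}}}(b)\,z_{\la^{(j)}}=q^{2(j'-i')}z_{\la^{(j)}}$, where $(i',j')$ is the node occupied by $b$ in $t_{\la^{(j)}}$. Since $t_{\und{\la}}$ is built by filling the last component along columns first, then the second-last, and so on, the entry $k=r-a_j+b$ of $t_{\und{\la}}$ sits in the $j$-th component in exactly the node that $b$ occupies in $t_{\la^{(j)}}$ (the offset $r-a_j$ accounts for all the entries of components $j+1,\ldots,m$ that were filled earlier). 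Hence ${\rm res}_{t_{\und{\la}}}(k)=u_j q^{2(j'-i')}=u_j\,{\rm res}_{t_{\la^{(j)}}}(b)$, and assembling the scalars gives $L_k z_{\und{\la}}={\rm res}_{t_{\und{\la}}}(k)\,z_{\und{\la}}$, as claimed.

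\emph{Main obstacle.} The routine algebra is in \emph{First}: carefully pushing the long word $T_{k-1}\cdots T_1T_0T_1\cdots T_{k-1}$ through $v_{[\und{\la}]}$ and identifying the result with $u_j$ times a Hecke Jucys--Murphy element in the correct tensor slot. One must check that all the transpositions involved actually lie in $\mf S_{\bfa}$ (so Lemma~\ref{lem:T&JM}(1) applies) and track the $q$-powers so that the prefactor $q^{2(1-k)}$ recombines correctly with the shift to produce exactly $J_b=q^{2(1-b)}T_{b-1}\cdots T_1T_1\cdots T_{b-1}$ in slot $j$ after the $T_0$ is absorbed as the scalar $u_j$. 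The bookkeeping of indices under $w_{[\und{\la}]}$ (matching $k=r-a_j+b$ with position $b$ of the $j$-th block) and the parallel bookkeeping inside the tableau $t_{\und{\la}}$ is the part that needs the most care, but it is conceptually straightforward once the block structure of $w_{[\und{\la}]}$ from~\eqref{lem:wudla} and the column-then-reverse-component filling rule for $t_{\und{\la}}$ are spelled out.
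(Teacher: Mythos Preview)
Your approach is correct and essentially identical to the paper's: both recurse via \eqref{JM2} only down to $L_{r-a_j+1}$ (not all the way to $T_0$), obtaining $L_k=q^{2(1-b)}T_{r-a_j+b-1}\cdots T_{r-a_j+1}L_{r-a_j+1}T_{r-a_j+1}\cdots T_{r-a_j+b-1}$, then push the $b-1$ generators through $v_{[\udla]}$ via \eqref{Tvla}, absorb $L_{r-a_j+1}$ as the scalar $u_j$ via \eqref{Lvla}, and apply Corollary~\ref{cor:finJMact} in the $j$th tensor slot. The worry in your last paragraph about transpositions crossing block boundaries and about reconciling the prefactor $q^{2(1-k)}$ with $q^{2(1-b)}$ disappears once you stop the recursion at $L_{r-a_j+1}$ rather than expanding fully to $T_0$; only the generators $T_{r-a_j+1},\ldots,T_{r-a_j+b-1}$ are involved, and these all satisfy the hypothesis of \eqref{Tvla}.
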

\begin{proof}
Fix $1\leq k\leq r$ and assume $[\udla]=\bfa=[a_0,a_1,\ldots,a_m]$.
Suppose that $k=r-a_j+b=|\la^{(j+1)}|+\cdots+|\la^{(m)}|+b$
for $1\leq j\leq m$ with $1\leq b\leq |\la^{(j)}|$.
Then note that the node in $t_{\udla}$ occupied by $k$
coincides with the node occupied by $b$ in $t_{\la^{(j)}}$, where
$t_{\udla}=(t_{\la^{(1)}},\ldots,t_{\la^{(m)}})$. Hence,
\begin{equation}\label{tworesidue}
{\rm res}_{t_{\udla}}(k)=u_j{\rm res}_{t_{\la^{(j)}}}(b).
\end{equation}
Observe that by~\eqref{JM2} the following holds
$$
L_k=L_{r-a_j+b}=q^{2(1-b)}T_{r-a_j+b-1}\cdots T_{r-a_j+1}L_{r-a_j+1}T_{r-a_j+1}\cdots T_{r-a_j+b-1}.
$$
It follows from \cite[Proposition 3.1]{DR1} that we have
\begin{align*}
L_{r-a_j+1}v_{[\und{\la}]}
=u_jv_{[\und{\la}]},\quad
T_{r-a_j+b}v_{[\und{\la}]}
=v_{[\und{\la}]}T_{a_{j-1}+b}.
\end{align*}
Then by~\eqref{zv}, a direct calculation shows
\begin{align}
L_kz_{\und{\la}}
=&q^{2(1-b)}T_{r-a_j+b-1}\cdots T_{r-a_j+1}L_{r-a_j+1}T_{r-a_j+1}\cdots T_{r-a_j+b-1}z_{\und{\la}}\notag\\
=&q^{2(1-b)}T_{r-a_j+b-1}\cdots T_{r-a_j+1}L_{r-a_j+1}T_{r-a_j+1}\cdots T_{r-a_j+b-1}
v_{[\und{\la}]}(z_{\la^{(1)}}\otimes\cdots \otimes z_{\la^{(m)}})\notag\\
%=&q^{2(1-b)}T_{r-a_j+b-1}\cdots T_{r-a_j+1}L_{r-a_j+1}v_{[\und{\la}]}
%T_{a_{j-1}+1}\cdots T_{a_{j-1}+b-1}(z_{\la^{(1)}}\otimes\cdots \otimes z_{\la^{(m)}})\notag\\
%=&u_jq^{2(1-b)}T_{r-a_j+b-1}\cdots T_{r-a_j+1}v_{[\und{\la}]}
%T_{a_{j-1}+1}\cdots T_{a_{j-1}+b-1}(z_{\la^{(1)}}\otimes\cdots \otimes z_{\la^{(m)}})\notag\\
=&u_jq^{2(1-b)}v_{[\und{\la}]}T_{a_{j-1}+b-1}\cdots T_{a_{j-1}+1}
T_{a_{j-1}+1}\cdots T_{a_{j-1}+b-1}(z_{\la^{(1)}}\otimes\cdots \otimes z_{\la^{(m)}}).
 \label{zL1}
\end{align}
Meanwhile, observe that the element
$q^{1-b}T_{a_{j-1}+b-1}\cdots T_{a_{j-1}+1}T_{a_{j-1}+1}\cdots T_{a_{j-1}+b-1}$ belongs to
the subalgebra $\mc H(\mu)$ of $\mc H(r)$, where $\mu=(|\la^{(1)}|,\ldots,|\la^{m}|)$,  and
can be identified with $1^{\otimes j-1}\otimes J_b\otimes1^{\otimes m-j}$ under the first isomorphism in \eqref{two iso}.
Then it follows from \cite[Theorem 3.14]{DJ2} that $J_b z_{\la^{(j)}}={\rm res}_{t_{\la^{(j)}}}(b) z_{\la^{(j)}}$. Hence,
\begin{align*}
&q^{2(1-b)}T_{a_{j-1}+b-1}\cdots T_{a_{j-1}+1}T_{a_{j-1}+1}\cdots T_{a_{j-1}+b-1}
(z_{\la^{(1)}}\otimes\cdots \otimes z_{\la^{(m)}})
\\
&=z_{\la^{(1)}}\otimes\cdots \otimes J_b z_{\la^{(j)}} \otimes\cdots\otimes z_{\la^{(m)}}
={\rm res}_{t_{\la^{(j)}}}(b)(z_{\la^{(1)}}\otimes\cdots\otimes z_{\la^{(j)}} \otimes\cdots \otimes z_{\la^{(m)}}).
\end{align*}
Thus, by \eqref{zL1} and \eqref{tworesidue}, one can deduce that
$$
L_k z_{\und{\la}}=u_j{\rm res}_{t_{\la^{(j)}}}(b)
v_{[\und{\la}]}(z_{\la^{(1)}}\otimes\cdots \otimes z_{\la^{(m)}})
={\rm res}_{t_{\und{\la}}}(k)z_{\und{\la}},
$$
as desired.
\end{proof}

The following result will be used later on.
\begin{prop} \label{prop:induceSpecht}
Suppose that $\udu=(u_1,\ldots,u_m)\in\mathbb (\C^*)^m$ and
$\udla=(\la^{(1)},\ldots,\la^{(m)})\in\mc P_m(r)$.
Then the inflation of the $\AKa$-module $S^{\udla}$
gives an $\afHr$-module isomorphism:
\begin{align*}
S^{\udla}\cong {\rm ind}^{\afHr}_{\mc H_{\vtg}(\mu)}
({\rm ev}^*_{u_m}(S^{\la^{(m)}})\otimes\cdots\otimes{\rm ev}^*_{u_1}(S^{\la^{(1)}})),
\end{align*}
where $\mu=(|\la^{(m)}|,\ldots,|\la^{(2)}|,|\la^{(1)}|)$.
\end{prop}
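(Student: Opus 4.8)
The plan is to combine the $\mc H(r)$-level description of the Specht module from Lemma~\ref{lem:Specht} with the structural results on evaluation modules and parabolic induction from \S2--\S3. The starting point is the formula $z_{\und{\la}}=v_{[\und{\la}]}(z_{\la^{(1)}}\otimes\cdots\otimes z_{\la^{(m)}})$ and the $\mc H(r)$-module isomorphism \eqref{slaind}, together with the action of the Jucys--Murphy elements computed in Proposition~\ref{prop:JMact}: $L_k z_{\und{\la}}={\rm res}_{t_{\und{\la}}}(k)z_{\und{\la}}$. Since $S^{\und{\la}}=\AKa z_{\und{\la}}$ and $X_k$ acts as $L_k$ under the inflation $\varpi_{\udu}$, this tells us exactly how $X_k$ acts on the generator, which is what one needs to identify the inflated module with an induced module built from evaluation modules.

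First I would identify the tensor factor ${\rm ev}^*_{u_k}(S^{\la^{(k)}})$ concretely as an $\afHr$- (or rather $\mc H_\vtg(|\la^{(k)}|)$-) module. By definition of ${\rm ev}_{u_k}$ in \eqref{ev}, on ${\rm ev}^*_{u_k}(S^{\la^{(k)}})$ the generator $T_i$ acts as in $S^{\la^{(k)}}$ and $X_1$ acts as $u_k$; using the relation $T_iX_iT_i=q^2X_{i+1}$ repeatedly (equivalently, the fact that $X_{j+1}=q^{-2}T_j X_j T_j$, which is exactly the recursion \eqref{JM2} with $T_0$ replaced by the scalar), one sees that $X_j$ acts as $q^{2(1-j)}T_{j-1}\cdots T_1\cdot u_k\cdot T_1\cdots T_{j-1}=u_k J_j$. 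Hence on the generator $z_{\la^{(k)}}$ of $S^{\la^{(k)}}$, the element $X_j$ acts by $u_k\,{\rm res}_{t_{\la^{(k)}}}(j)$ by Corollary~\ref{cor:finJMact}. So ${\rm ev}^*_{u_m}(S^{\la^{(m)}})\otimes\cdots\otimes{\rm ev}^*_{u_1}(S^{\la^{(1)}})$ is precisely the $\mc H_\vtg(\mu)$-module whose restriction to $\mc H(\mu)$ is $S^{\la^{(m)}}\otimes\cdots\otimes S^{\la^{(1)}}$ and on which the affine generators act with the residues dictated by Proposition~\ref{prop:JMact} after reindexing via $w_{[\und{\la}]}$.

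Next I would construct the $\afHr$-homomorphism. By Frobenius reciprocity it suffices to give an $\mc H_\vtg(\mu)$-homomorphism from ${\rm ev}^*_{u_m}(S^{\la^{(m)}})\otimes\cdots\otimes{\rm ev}^*_{u_1}(S^{\la^{(1)}})$ into $S^{\und{\la}}$, sending the generator $z_{\la^{(m)}}\otimes\cdots\otimes z_{\la^{(1)}}$ to $v_{[\und{\la}]}^{-1}$-conjugate of $z_{\und{\la}}$ — concretely, to the element $(z_{\la^{(m)}}\otimes\cdots\otimes z_{\la^{(1)}})v_{[\und{\la}]}$ viewed inside $S^{\und{\la}}$ via the second equality in \eqref{zv}, or more cleanly to a vector supported on the $t_{\und{\la}}$-weight space. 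The compatibility with the $T_i$ for $s_i\in\mf S_\mu$ follows from Lemma~\ref{lem:T&JM}(1) (equation \eqref{Tvla}), and compatibility with the $X_k$ follows from Lemma~\ref{lem:T&JM}(2) (equation \eqref{Lvla}) together with Proposition~\ref{prop:JMact}: both sides act by the residue ${\rm res}_{t_{\und{\la}}}$, appropriately permuted by $w_{[\und{\la}]}$. Then I would check the induced map ${\rm ind}^{\afHr}_{\mc H_\vtg(\mu)}(\cdots)\to S^{\und{\la}}$ is surjective, since the image contains $z_{\und{\la}}$ and hence all of $\AKa z_{\und{\la}}=S^{\und{\la}}$, and finally compare dimensions: the restriction of the induced module to $\mc H(r)$ is ${\rm ind}^{\mc H(r)}_{\mc H(\mu)}(S^{\la^{(m)}}\otimes\cdots\otimes S^{\la^{(1)}})$, which by \eqref{slaind} has the same dimension as $S^{\und{\la}}|_{\mc H(r)}$. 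A surjection of equal finite dimension is an isomorphism.

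The main obstacle I anticipate is bookkeeping with the two orderings and the permutation $w_{[\und{\la}]}$: the Specht module $z_{\und{\la}}=v_{[\und{\la}]}(z_{\la^{(1)}}\otimes\cdots\otimes z_{\la^{(m)}})$ has its components in the order $\la^{(1)},\ldots,\la^{(m)}$ but the induced module is written with the \emph{reversed} order $\la^{(m)},\ldots,\la^{(1)}$, and the intertwining element $v_{[\und{\la}]}$ is exactly what implements this reversal (via \eqref{lem:wudla}). One must track carefully that, under $w_{[\und{\la}]}$, the index $k=r-a_j+b$ in $S^{\und{\la}}$ corresponds to position $a_{j-1}+b$ in the concatenated tensor factor, so that the $X_k$-eigenvalue $u_j\,{\rm res}_{t_{\la^{(j)}}}(b)$ matches the eigenvalue of $X_{a_{j-1}+b}$ on ${\rm ev}^*_{u_j}(S^{\la^{(j)}})$; this is precisely the content of \eqref{tworesidue} inside the proof of Proposition~\ref{prop:JMact}, so the residue computation is already in hand and the remaining work is organizing the $v_{[\und{\la}]}$-conjugation cleanly. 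Everything else — surjectivity and the dimension count — is routine given \eqref{slaind} and \eqref{dimImu}-style counting.
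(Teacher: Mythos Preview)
Your proposal is correct and follows essentially the same approach as the paper: define an $\mc H_\vtg(\mu)$-homomorphism sending the generator $z_{\la^{(m)}}\otimes\cdots\otimes z_{\la^{(1)}}$ to $z_{\udla}$ (your target $(z_{\la^{(m)}}\otimes\cdots\otimes z_{\la^{(1)}})v_{[\udla]}$ is exactly $z_{\udla}$ by \eqref{zv}), verify it using Lemma~\ref{lem:T&JM}(1) and Proposition~\ref{prop:JMact}, apply Frobenius reciprocity, observe surjectivity, and conclude by the dimension comparison via \eqref{slaind}. The paper's proof is just a terser version of what you wrote; your extra analysis of the $X_j$-action on ${\rm ev}^*_{u_k}(S^{\la^{(k)}})$ as $u_k J_j$ and the reindexing under $w_{[\udla]}$ simply makes explicit what the paper packages into the phrase ``it is straightforward to check''.
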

\begin{proof}
Recall that $\mc H_{\vtg}(\mu)=\langle\sH(\mu),X^{\pm1}_1,\ldots, X^{\pm1}_r\rangle$.
By~\eqref{pi} and Proposition~\ref{prop:JMact},
it is straightforward to check that the following map
\begin{align*}
\zeta: {\rm ev}^*_{u_m}(S^{\la^{(m)}})\otimes\cdots\otimes{\rm ev}^*_{u_1}(S^{\la^{(1)}})
&\longrightarrow S^{\udla},\quad
z_{\la^{(m)}}\otimes\cdots\otimes z_{\la^{(1)}}\longmapsto z_{\udla}
\end{align*}
is an $\mc H_{\vtg}(\mu)$-homomorphism.
Then, by Frobenius reciprocity, there exists a nonzero homomorphism
$\tilde{\zeta}:{\rm ind}^{\afHr}_{\mc H_{\vtg}(\mu)}
{\rm ev}^*_{u_m}(S^{\la^{(m)}})\otimes\cdots\otimes{\rm ev}^*_{u_1}(S^{\la^{(1)}})
\rightarrow S^{\udla}$ which is also surjective since $S^{\udla}$ is generated by $z_{\udla}$.
By~\eqref{slaind}, a dimension comparison proves the proposition.
\end{proof}

%\begin{rem}
%By Proposition~\ref{prop:induceSpecht},
%the $\afHr$-module
%${\rm ind}^{\afHr}_{\mc H_{\vtg}(\mu)}
%({\rm ev}^*_{u_m}(S^{\la^{(m)}})\otimes\cdots\otimes{\rm ev}^*_{u_1}(S^{\la^{(1)}}))$
%affords an $\AKa$-module. Actually, it is not hard to check directly that the polynomial
%$(X_1-u_1)(X_1-u_2)\cdots(X_1-u_m)$ acts as zero and, hence, the $\afHr$-module structure factors
%through the surjective homomorphism $\varpi_{\udu}$ followed by the $\AKa$-module structure on  ${\rm ind}^{\afHr}_{\mc H_{\vtg}(\mu)}
%({\rm ev}^*_{u_m}(S^{\la^{(m)}})\otimes\cdots\otimes{\rm ev}^*_{u_1}(S^{\la^{(1)}}))$.
%\end{rem}

%\begin{rem}
%The Specht module $S^{\udla}$ is isomorphic to the
%\end{rem}
%
\subsection{\bf Ariki--Koike algebras of integral type and Kleshchev multipartitions.}
The classification of irreducible $\AKa$-modules has been completed by Ariki using Kleshchev multipartitions;
see \cite[Theorem 4.2]{Ar} and \cite[conjecture 2.12]{AM}.
It is also known from \cite[\S1]{AM} that the classification of irreducible
$\AKa$-modules is reduced to the cases where the parameters $u_1,\ldots, u_m$
are either all zero or all powers of $q^2$. For our purpose, we are mainly concerned about the latter one
(see footnote 2).

For $m\geq 1$, let
$$
\mathfrak F_m=\{(f_1,\ldots,f_m)\in\Z^m\mid f_1\geq\cdots\geq f_m\},
$$
and let
\begin{equation}\label{set F}
\mathfrak F=\bigcup_{m\geq1}\mathfrak F_m.
\end{equation}
For $f=(f_1,\ldots,f_m)\in\mathfrak{F}$, let $f^*=(-f_m,\ldots,-f_1)$.  Clearly, this gives a bijection
 $$(\ )^*:\mathfrak F\longrightarrow\mathfrak F.$$

Recall that $q$ is {\it not a root of unity} and consider the parameter vectors of the form
\begin{equation}\label{parameters for integral AKa}
\udu_f=(q^{2f_1},\ldots,q^{2f_m}) \quad (f\in\mathfrak F_m),
\end{equation}
That is, $u_1=(q^2)^{f_1},\ldots, u_m=(q^2)^{f_m}$. In addition, we set $\udu_f^{-1}=\udu_{f^*}=(q^{-2f_m},\ldots,q^{-2f_1})$.
An Ariki--Koike algebra $\AKa$ is of {\it integral type} if $\udu=\udu_f$ for some $f\in\mathfrak F$.
Thus, Kleshchev multipartions for an Ariki--Koike algebra $\AKa$ of integral type (and a parameter which is not a root of unity) can be described as follows (cf. \cite{Va}).

\begin{defn}\label{KMP}
 Given a multipartition $\la=(\la^{(1)},\ldots,\la^{(m)})$ of $r$,
we will say that $\udla$ is a
{\em Kleshchev multipartition} with respect to $f\in\mathfrak F_m$ (or  to $\udu_f$) if it satisfies
\begin{equation}\label{Kleshchev}
\la^{(k)}_{j+f_k-f_{k+1}}\leq\la^{(k+1)}_j
\end{equation}
for $j\geq 1$ and $1\leq k\leq m-1$.

Denote by $\mc K_{\udu_f}(r)$ or $\mc K_f(r)$ the set of Kleshchev multipartitions of $r$
with respect to $\udu_f$.
\end{defn}

Note that, when $q$ is a root of unity, the definition of Kleshchev multipartitions is much more complicated.

\begin{rem}\label{reduce to 1} If $\ell(\la^{(k)})\leq f_k-f_{k+1}+1$, then \eqref{Kleshchev} is equivalent to $\la^{(k)}_{1+f_k-f_{k+1}}\leq\la^{(k+1)}_1$.
\end{rem}

For $\udu=(u_1,\ldots,u_m)$, set
$$
\udu^{-1}=(u_m^{-1},\ldots,u_1^{-1}).
$$
Then by~\eqref{Kleshchev}, a multipartition $\und{\ga}=(\ga^{(1)},\ldots,\ga^{(m)})$ belongs to $\mc K_{\udu_f^{-1}}(r)=\mc K_{f^*}(r)$
if and only if
\begin{equation}\label{Kinv}
\ga^{(k)}_{j+f_{m-k}-f_{m-k+1}}\leq\ga^{(k+1)}_j
\end{equation}
for $j\geq 1$ and $1\leq k\leq m-1$.

 It is known \cite[Corollary 1.3]{BK1} that the Ariki-Koike algebra $\mc H_{\udu_f}(r)$
for $f\in\mathfrak F$ and $q$ not root of unity is isomorphic to the corresponding degenerate cyclotomic Hecke algebra
over $\C$. Then we have the following.
\begin{prop}[{\cite[Theorem 3.8, Lemma 3.13]{BK} (cf. \cite{AM, Ar, Va})}]
\label{prop:BK}
Assume that $f=(f_1,\ldots,f_m)\in\mathfrak F$ with $m\geq 1$.
The following holds for all $r\geq 1$:
\begin{enumerate}
\item For multipartition $\udla=(\la^{(1)},\ldots,\la^{(m)})\in\mc P_m(r)$ satisfying $\udla'\in\mc K_{f^*}(r)$,
the $\afHr$-module
${\rm ind}^{\afHr}_{\mc H_{\vtg}(\mu)}
({\rm ev}^*_{q^{2f_m}}(S^{\la^{(m)}})\otimes\cdots\otimes{\rm ev}^*_{q^{2f_1}}(S^{\la^{(1)}}))$
has irreducible head denoted by $D^{\udla}$, where $\mu=(|\la^{(m)}|,\ldots,|\la^{(1)}|)$.
In particular, $D^{\udla}$ affords an irreducible $\mc H_{\udu_f}(r)$-module.

\item The set $\{D^{\udla'}~| \udla\in\mc K_{f^*}(r)\}$ is a complete set of non-isomorphic
irreducible $\mc H_{\udu_f}(r)$-modules.

\end{enumerate}
\end{prop}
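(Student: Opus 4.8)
The plan is to deduce this from \cite[Theorem 3.18, Lemma 3.13]{BK} using results already recorded above; the only genuine work is a translation of conventions.

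First I would rewrite the module in (1). Applying Proposition~\ref{prop:induceSpecht} with $\udu=\udu_f$ shows that
\[
{\rm ind}^{\afHr}_{\mc H_{\vtg}(\mu)}\bigl({\rm ev}^*_{q^{2f_m}}(S^{\la^{(m)}})\otimes\cdots\otimes{\rm ev}^*_{q^{2f_1}}(S^{\la^{(1)}})\bigr)
\]
is simply the inflation of the Ariki--Koike Specht module $S^{\udla}$ of $\mc H_{\udu_f}(r)$, and by the Remark following that proposition the $\afHr$-action factors through $\varpi_{\udu_f}$. Thus (1) amounts to: the $\mc H_{\udu_f}(r)$-module $S^{\udla}$ has nonzero (hence simple) head $D^{\udla}$ whenever $\udla'\in\mc K_{f^*}(r)$; and (2) amounts to: the $D^{\udla'}$ with $\udla\in\mc K_{f^*}(r)$ form a complete irredundant list of simple $\mc H_{\udu_f}(r)$-modules.

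Next I would move to a setting where the classification is literally available. Since $q$ is not a root of unity, $\mc H_{\udu_f}(r)$ is isomorphic to the degenerate cyclotomic Hecke algebra attached to $f$ by \cite[Corollary 2]{BK1}, and by \cite[Theorem 2.9]{DR2} the Specht module $S^{\udla}$ is, up to a twist by an algebra automorphism, the Dipper--James--Mathas cell module labelled by $\udla'$. For a cellular algebra the head of a cell module is automatically $0$ or absolutely simple, and the nonzero ones form a complete irredundant family of simple modules; so it only remains to identify the labels with nonzero head. That is exactly \cite[Theorem 3.18, Lemma 3.13]{BK} (cf. \cite{AM, Ar, Va}): the cell module indexed by a multipartition $\und{\nu}$ has nonzero simple head precisely when $\und{\nu}$ is a Kleshchev multipartition for the relevant charge, and these heads exhaust the simple modules.

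The remaining step, which I expect to be the main obstacle, is pure bookkeeping of conventions: one must verify that the charge forced by our normalisation ($L_1=T_0$, $X_k\mapsto L_k$, $u_i=q^{2f_i}$), together with the passage between left and right modules via $\tau$, the transposition $\udla\leftrightarrow\udla'$, the automorphism twist of \cite[Theorem 2.9]{DR2}, and the inversion $f\mapsto f^*$, turns the Brundan--Kleshchev ``Kleshchev'' condition into exactly the inequality \eqref{Kinv} defining $\mc K_{f^*}(r)$ --- equivalently, that our Definition of Kleshchev multipartition via \eqref{Kleshchev}/\eqref{Kinv} matches their combinatorics. Granting this, (1) and (2) follow; and since the $\afHr$-action on $D^{\udla}$ factors through $\varpi_{\udu_f}$, $D^{\udla}$ is indeed a simple $\mc H_{\udu_f}(r)$-module, completing (1).
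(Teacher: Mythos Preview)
The paper does not supply a proof of this proposition at all: it is stated purely as a citation of \cite[Theorem~3.18, Lemma~3.13]{BK} (with \cite{AM,Ar,Va} as background), relying on the sentence preceding it that \cite[Corollary~2]{BK1} identifies $\mc H_{\udu_f}(r)$ with the degenerate cyclotomic Hecke algebra. Your sketch is therefore already more detailed than what the paper offers, and it follows the same underlying route (cite \cite{BK1} for the passage to the degenerate setting, then invoke \cite{BK}); the reformulation via Proposition~\ref{prop:induceSpecht} that you front-load is exactly what the paper records immediately afterwards as Corollary~\ref{cor:hdSpecht}. The bookkeeping of conventions you flag as the ``main obstacle'' is indeed the only substantive content, and the paper simply takes it as read.
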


By Proposition~\ref{prop:induceSpecht} and Proposition~\ref{prop:BK}, we have the following.
\begin{cor}\label{cor:hdSpecht}
For $\udla\in\mc P_m(r)$ satisfying $\udla'\in\mc K_{f^*}(r)$ with $f\in\mathfrak F_m$, the cyclotomic Specht module
$S^{\udla}$ over the algebra $\mc H_{\udu_f}(r)$  has irreducible head isomorphic to $D^{\udla}$.
\end{cor}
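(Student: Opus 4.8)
The plan is to combine the two results just established. By Proposition~\ref{prop:induceSpecht}, the inflation of the Specht module $S^{\udla}$ to an $\afHr$-module (which makes sense since the parameters $\udu_f=(q^{2f_1},\ldots,q^{2f_m})$ are all nonzero) is isomorphic to
$$
{\rm ind}^{\afHr}_{\mc H_{\vtg}(\mu)}
({\rm ev}^*_{q^{2f_m}}(S^{\la^{(m)}})\otimes\cdots\otimes{\rm ev}^*_{q^{2f_1}}(S^{\la^{(1)}})),
$$
where $\mu=(|\la^{(m)}|,\ldots,|\la^{(1)}|)$. On the other hand, since $\udla'\in\mc K_{f^*}(r)$, Proposition~\ref{prop:BK}(1) says precisely that this induced module has irreducible head $D^{\udla}$.

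First I would note that the module-theoretic notion of ``head'' is compatible with the inflation along the surjection $\varpi_{\udu_f}:\afHr\twoheadrightarrow\mc H_{\udu_f}(r)$: since $\varpi_{\udu_f}$ is surjective, $\afHr$-submodules of an inflated $\mc H_{\udu_f}(r)$-module are exactly the $\mc H_{\udu_f}(r)$-submodules, so the radical and head computed over $\afHr$ coincide with those computed over $\mc H_{\udu_f}(r)$. (The Remark following Proposition~\ref{prop:induceSpecht} already records that the induced $\afHr$-module in question is itself the inflation of an $\mc H_{\udu_f}(r)$-module, which is consistent with the isomorphism of Proposition~\ref{prop:induceSpecht}.) Then I would simply transport the conclusion of Proposition~\ref{prop:BK}(1) through the isomorphism of Proposition~\ref{prop:induceSpecht} to deduce that $S^{\udla}$, viewed as an $\mc H_{\udu_f}(r)$-module, has head isomorphic to $D^{\udla}$.

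There is essentially no obstacle here; the corollary is a direct assembly of the two preceding propositions, and the only point requiring a word of care is the bookkeeping that ``head over $\afHr$'' equals ``head over $\mc H_{\udu_f}(r)$'' under inflation, which is immediate from surjectivity of $\varpi_{\udu_f}$. I would keep the proof to one or two sentences, citing Proposition~\ref{prop:induceSpecht} and Proposition~\ref{prop:BK}(1).
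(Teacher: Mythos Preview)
Your proposal is correct and matches the paper's own approach exactly: the paper simply states the corollary as following immediately from Proposition~\ref{prop:induceSpecht} and Proposition~\ref{prop:BK}. Your extra remark that heads are preserved under inflation along the surjection $\varpi_{\udu_f}$ is the only subtlety, and you handle it correctly.
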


We will construct in the next section a subset of $\cup_{f\in\mathfrak F}\mc K_f(r)$ which labels precisely the irreducible $\afHr$-modules
with ``integral weights'', i.e., the irreducible objects in $\afHr$-${\sf mod}^{\Z}$.

%%%%%%

%%
\section{standard multipartitions and integral multisegments}

At the end of \S 2, we saw that irreducible objects in the category $\afHr$-${\sf mod}^{\Z}$ are indexed by the set $\mathscr S_r^\Z$. The set can be further described in terms of standard sequences of integral segments or standard words. In this section,
we will describe the set in terms of standard multipartitions. In particular, we will characterise the irreducible objects in the category $\afHr$-${\sf mod}^{\Z}$ by these standard multipartitions and their associated irreducible modules of Ariki--Koike algebras.

\subsection{Column residual segments of a multipartition.}
We first construct a sequence $\bfs$ of segments via the column residual segments of  a multipartition $\udla$ and show that $I_\bfs$ maps onto $S^{\udla}$.

For $\und{\la}=(\la^{(1)},\la^{(2)},\ldots,\la^{(m)})\in\mc P_m(r)$ and parameter vector $\udu=(u_1,\ldots,u_m)$,
define the $j$th-{\it column residual segment} of $\la^{(i)}$
$$
\sfs^{(i)}_j=(u_iq^{2(j-(\la^{(i)'})_j)},\ldots, u_iq^{2(j-2)},u_iq^{2(j-1)})\quad (1\leq i\leq m, 1\leq j\leq\la^{(i)}_1)
$$
by reading the residues of the nodes in the $j$-th column of $\la^{(i)}$ from bottom to top, and form
the sequence of column residual segments from column 1, then column 2, and so on, starting from the last partition of $\udla$, then the second last partition, and so on,
\begin{equation}\label{scla}
\bfs^{\sfc}_{\und{\la};\udu}=\big(\sfs_1^{(m)},\ldots,\sfs^{(m)}_{\la^{(m)}_1},\ldots,
\sfs^{(1)}_1,\ldots,\sfs^{(1)}_{\la^{(1)}_1}\big);
\end{equation}
Compare the order of the Young diagram $t_{\udla}$ in \eqref{tableau}.
 We will simply write $\bfs^{\sfc}_{\und{\la};f}$  in the sequel, if $\udu=\udu_f$ for some $f\in \mathfrak F_m$.
\begin{lem}\label{lem:Ibfs}
Assume $\und{\la}=(\la^{(1)},\la^{(2)},\ldots,\la^{(m)})\in\mc P_m(r)$ and
$\udu=(u_1\ldots,u_m)\in(\C^*)^m$.
Write $\bfs=\bfs^{\sfc}_{\und{\la};\udu}$.
Then there exists a surjective $\afHr$-homomorphism
$I_{\bfs}
\twoheadrightarrow S^{\und{\la}}$.
\end{lem}
\begin{proof}
By~\eqref{Tzudla} and Proposition~\ref{prop:JMact}, one can deduce that
$$
T_i z_{\und{\la}}=-z_{\und{\la}},\quad
X_k z_{\und{\la}}=L_k z_{\und{\la}}={\rm res}_{t_{\und{\la}}}(k)z_{\und{\la}}
$$
for any $T_i\in\mc H((\und{\la}')^\vee)$ and $1\leq k\leq r$.
This means the space $\C z_{\und{\la}}$
affords an $\mc H_{\vtg}((\und{\la}')^\vee)$-submodule of the inflation of the $\AKa$-module
$S^{\udla}$.
Then by Frobenius reciprocity, there exists a surjective $\afHr$-homomorphism
$${\rm Ind}^{\afHr}_{\mc H_{\vtg}((\und{\la}')^\vee)}\C z_{\und{\la}}
\twoheadrightarrow S^{\und{\la}}.$$

Observe from~\eqref{scla}, \eqref{tilde s}, and the standard tableau $t_{\udla}$ (see \eqref{tableau}) that the following holds (see \eqref{tilde s})
\begin{equation}
\widetilde{\bfs}^\vee
=({\rm res}_{t_{\und{\la}}}(1),{\rm res}_{t_{\und{\la}}}(2),
\ldots,{\rm res}_{t_{\und{\la}}}(r)),\quad
\mu(\bfs)
=(\und{\la}')^\vee.\label{mubfs}
\end{equation}
Then it is straightforward to check that $\C z_{\und{\la}}\cong\C_{\bfs}$ as $\mc H_{\vtg}((\und{\la}')^\vee)$-modules.
Hence the lemma follows from \eqref{Is}.
\end{proof}
\subsection{\bf Standard multipartitions associated with integral multisegments.}
We shall construct a Kleshchev multipartition associated with an integral multisegment as follows.

Recall the sets  $\mathfrak F_m,\mathfrak F$ in \eqref{set F} and the elements
$\udu_f=(q^{2f_1},q^{2f_2},\ldots,q^{2f_m})$ for $f=(f_1,\ldots,f_m)\in\mathfrak F_m$
in \eqref{parameters for integral AKa}.
%For $f\in\mathfrak F_m$,
%let
%$$
%\|f\|=\text{min}\{f_i-f_{i+1}\mid 1\leq i\leq m-1\}.
%$$
\begin{defn}\label{SKM}
A multipartition $\udga=(\ga^{(1)},\ldots,\ga^{(m)})$ of $r$
is called a {\it standard multipartition} relative to $f=(f_1,\ldots,f_m)\in\mathfrak F$, if it satisfies
\begin{itemize}
\item[(SK1)] $\udga$ is sincere (or equivalently, $\ell(\ga^{(a)})\geq 1$ for each $ 1\leq a\leq m$).

\item[(SK2)]$\ell(\ga^{(a)})\leq f_a-f_{a+1}+1$ and $\ga^{(a)}_{f_a-f_{a+1}+1}\leq\ga^{(a+1)}_1$
for each $ 1\leq a\leq m-1$.

\item[(SK3)] If $\ell(\ga^{(a)})= f_a-f_{a+1}$ for some $1\leq a\leq m-1$,
then $\ga^{(a)}_{f_a-f_{a+1}}\leq \ga^{(a+1)}_1-1$.
\end{itemize}
\end{defn}
Observe from the condition (SK2) that a standard multipartition is a Kleshchev multipartition; see Remark \ref{reduce to 1}.

Let $\mc K^s_{f}(r)$ be the set of all standard multipartitions of $r$ relative to $f\in\mathfrak F$. Then
$$
\mc K^s_{f}(r)\subseteq\mc K_{f}(r)=\mc K_{\udu_f}(r).
$$

%\begin{lem}
%Suppose $f=(f_1,\ldots,f_m)\in\mathfrak F$.
%Then
%\begin{enumerate}
%\item For any $c\in\Z$, we have
%$$
%\mc K^s_{f}(r)=\mc K^s_{f+c}(r)
%$$
%where $f+c=(f_1+c,\ldots,f_m+c)$.

%\item If $m=1$, then
%$
%\mc K^s_{f}(r)=\mc P(r),
%$ the set of partitions of $r$.

%\item If $m>1$ and $\|f\|\geq r$,
%then $m\leq r$ and
%$
%\mc K^s_{f}(r)$ is the set of all sincere multipartitions in
%$\mc P_{m}(r)$.

%\item If $f_1=f_2=\cdots=f_m$, then $m\leq r$ and
%$$\mc K^s_{f}(r)\equiv\{(a_1,\ldots,a_m)\in\La(m,r)\mid 1\leq a_1\leq\cdots\leq a_m\}.$$

%\begin{align*}
%&\mc K^s_{f}(r)
%=\{\udga=(\ga^{(1)},\ldots,\ga^{(m)})
%\in\mc K_{f}(r)~|~\ell(\ga^{(a)})=1, 1\leq a\leq m\}\\
%&=\{\udga=(\ga^{(1)},\ldots,\ga^{(m)})\in\mc P_m(r)~|~1\leq \ga^{(1)}_1\leq\cdots\ga^{(m)}_1,
%\ell(\ga^{(a)})=1, 1\leq a\leq m\}.
%\end{align*}

%\end{enumerate}
%\end{lem}
%\begin{proof}
%Observe that the conditions (SK1)-(SK3) depend only on $f_a-f_{a+1}$ for $1\leq a \leq m-1$
%and hence Part (1) holds. Clearly Part (2) is true.
%Now suppose $f_a-f_{a+1}\geq r$ for all $1\leq a\leq m-1$ and $\udga=(\ga^{(1)},\ldots,\ga^{(m)})
%\in\mc P_m(r)$ is sincere.
%Then $|\ga^{(a)}|\leq r-m+1\leq r-1$ and hence (SK2) and (SK3) automatically hold for $\udga$
%and thus $\udga\in\mc K^s_{f}(r)$, proving Part (3).
%Finally, if $f_1=f_2=\cdots=f_m$, the $\udga=(\ga^{(1)},\ldots,\ga^{(m)})
%\in\mc K^s_{f}(r)$ if and only if $\ga^{(i)}=(\ga_1^{(i)})$ for $1\leq i \leq m$ and
%$\ga^{(1)}_1\leq\cdots\leq\ga^{(m)}_1$ by (SK2). This proves Part (4).
%\end{proof}

Given ${{j_1j_2\ldots j_t}\choose{i_1i_2\ldots i_t}}\in\mc W^s(r)$ so that
 $\mathfrak s=\flat{{j_1j_2\ldots j_t}\choose{i_1i_2\ldots i_t}}\in\mathscr S^{\Z}_{r}$ (see \eqref{flat}),
let $1\leq k_1\leq t$ be the right-most column index satisfying $j_a=j_{a-1}+1$, for $2\leq a\leq k_1$, and
$i_1<i_2<\cdots<i_{k_1}$.
Define $\ga^{(1)}=(\ga^{(1)}_1,\ldots,\ga^{(1)}_{k_1})$ by setting
$\ga^{(1)}_b=j_b-i_b+1$ for $1\leq b\leq k_1$. Note that $k_1=j_{k_1}-j_1+1$ and $\ga^{(1)}$ is a partition.

Let
$$
\mathfrak s^1=\flat{{j_{k_1+1}j_{k_1+2}\ldots j_t}\choose{i_{k_1+1}i_{k_1+2}\ldots i_t}} \in\mathscr S^{\Z}_{r-|\ga^{(1)}|}.
$$
Now applying the same procedure to $\mathfrak s^1$, we obtain
a positive number $1\leq k_2\leq t-k_1$, a partition $\ga^{(2)}$ and $\mathfrak s^2\in\mathscr S^{\Z}_{r-|\ga^{(1)}|-|\ga^{(2)}|}$.
Continuing in this way, we will end up with $1\leq k_1,\ldots, k_m\leq t$ and a multipartition
\begin{equation}\label{ga_s}
\underline{\ga}_\fks=(\ga^{(1)},\ga^{(2)},\ldots,\ga^{(m)}),
\end{equation}
where $ \ga^{(a)}=(\ga^{(a)}_1,\ga^{(a)}_2,\ldots,\ga^{(a)}_{k_a}) $ with
$
\ga^{(a)}_b=j_{k_1+\cdots+k_{a-1}+b}-i_{k_1+\cdots+k_{a-1}+b}+1
$
for all $1\leq a\leq m$ and $1\leq b\leq k_a$. Note that
\begin{equation}\label{ka}
k_a=j_{k_1+\cdots+k_a}-j_{k_1+\cdots+k_{a-1}+1}+1.
\end{equation}
Meanwhile, define
\begin{equation}\label{us}
\underline{u}_{\mathfrak s}=(u_1,\ldots,u_m)
\end{equation}
by setting
$$ u_1=q^{2(j_{k_1+\cdots+k_{m-1}+1})},~
u_2=q^{2(j_{k_1+\cdots+k_{m-2}+1})},~
\ldots,
u_m=q^{2j_1}.$$
(So $f_i=j_{k_1+\cdots+k_{m-i}+1}$ in the notation of \eqref{parameters for integral AKa}.)
Observe that, for $\mfs,\mft\in\mathscr S^{\Z}_{r}$,
\begin{equation}\label{equiv}
\mfs=\mft \text{ if and only if }  \udga_{\mfs}=\udga_{\mft} , \udu_{\mfs}=\udu_{\mft}.
\end{equation}
For every $f\in\mathfrak F$, recall the element $\udu_f$ defined in \eqref{parameters for integral AKa}. Let
$$\mathscr S_{r,f}^\Z:=\{\mathfrak s\in\mathscr S_r^\Z~|~\udu_{\mathfrak s}=\udu_f\}.$$

\begin{Example}
Suppose
$$\mathfrak s=\flat(w),\text{ where }w=\biggl(\begin{matrix}-1&0&1&2&2\\
-4&-5&-2&-1&-2\\\end{matrix}\biggr).$$
Then by the procedure described above, one can deduce that $k_1=1,k_2=3, k_3=1$, and
$\und\ga_\fks=(\ga^{(1)},\ga^{(2)},\ga^{(3)})$, where
$
\ga^{(1)}=(4),
\ga^{(2)}=(6,4,4),\text{ and }
\ga^{(3)}=(5).$
Moreover, $\udu_\fks=(u_1,u_2,u_3)$ with
$
u_1=q^4,u_2=q^0,u_3=q^{-2}.
$
We also observe from this example that $\und\ga_\fks\in\mc K_{\udu_\fks^{-1}}(r)$ is a standard multipartition. Moreover, for multipartition
$$
\udla_\fks=\und\ga_\fks'=((1^5),(3^41^2),(1^4))
$$ and the $u_\fks$ above,
 the multisegment $\bfs=\bfs^{\sfc}_{\udla_\fks;\udu_\fks}$ constructed in \eqref{scla}
 is the standard element in $\fks=\bar\bfs$.
\end{Example}
In fact, the last observation holds in general.

\begin{prop}\label{prop:stoK}
For $\fks\in\mathscr S^\Z_r$, let $\ga_\fks$ and $u_\fks$ be defined as in \eqref{ga_s} and \eqref{us}.
Then:-
\begin{enumerate}
\item $\underline{\ga}_\fks$ is a sincere standard multipartition with respect to
$u_\fks^{-1}$ and, moreover, $\bfs^{\sfc}_{\udga_\fks';\udu_\fks}$ is the standard element in $\fks$;
\item for a given $f\in\mathfrak F$, the map
\begin{align*}
\eta_{f}:\mathscr S_{r,f}^\Z
&\longrightarrow \mc K^s_{\udu_f^{-1}}(r)=\mc K^s_{f^*}(r),\quad
\mathfrak s\longmapsto \udga_{\mathfrak s}.
\end{align*}
is a bijection.
\end{enumerate}
\end{prop}
\begin{proof}(1)
Suppose $\mathfrak s\in \mathscr S_{r,f}^\Z$ and
assume that $\mathfrak s =\flat(w)$  with $w={{j_1j_2\ldots j_t}\choose{i_1i_2\ldots i_t}}\in\mc W^s(r)$. Then, $\udu_\fks=\udu_f$ and so $f_i=j_{k_1+\cdots+k_{m-i}+1}$ with $k_a$ as given in \eqref{ka}. In particular,
$$j_{k_1+\cdots+k_{a-1}+1}=f_{m-a+1}, \quad j_{k_1+\cdots+k_a}=f_{m-a+1}+k_a-1$$
for all $1\leq a\leq m$. Write $\underline{\ga}_\fks=(\ga^{(1)},\ga^{(2)},\ldots,\ga^{(m)})$
with $\ell(\ga^{(a)})=k_a=j_{k_1+\cdots+k_a}-j_{k_1+\cdots+k_{a-1}+1}+1$ for $1\leq a\leq m$. So $\udga_\fks$ is sincere.
In the following we shall show that $\udga_{\mathfrak s}$  satisfies the conditions (SK1)-(SK3).

First,
$
\ell(\ga^{(a)})=k_a\geq 1, \text{ for all }1\leq a\leq m,
$
and $f^*_a=-f_{m-a+1}=-j_{k_1+\cdots+k_{a-1}+1} $ for $1\leq a\leq m$.
Thus, $\udga_{\mathfrak s}$ satisfies
(SK1).
Also,  the hypothesis that the word $w$ is standard implies that
\begin{align*}
k_a-1+f_{m-a+1}=j_{k_1+\cdots+k_a}\leq j_{k_1+\cdots+k_a+1}=f_{m-a}.\quad
\end{align*}
Hence,
$
k_a\leq f_{m-a}-f_{m-a+1}+1=f^*_{a}-f^*_{a+1}+1
$
for all $1\leq a\leq m-1$. Hence to prove (SK2) holds, by~\eqref{Kinv} and~\eqref{us}, it suffices to show that
the following holds
for $1\leq a\leq m-1$ and $b\geq 1$:
\begin{equation}\label{inequality}
\ga^{(a)}_{b+j_{k_1+\cdots+k_a+1}-j_{k_1+\cdots+k_{a-1}+1}}\leq \ga^{(a+1)}_b
\end{equation}
Fix $1\leq a\leq m-1$ and $b\geq 1$.
If $\ga^{(a)}_{b+j_{k_1+\cdots+k_a+1}-j_{k_1+\cdots+k_{a-1}+1}}=0$, then
~\eqref{inequality} automatically holds.
Suppose now $\ga^{(a)}_{b+j_{k_1+\cdots+k_a+1}-j_{k_1+\cdots+k_{a-1}+1}}>0$.
Since $w$ is standard, the sequence $j_1,\ldots,j_t$ is weakly increasing. By \eqref{ka},
\begin{equation}\label{length ka}
j_{k_1+\cdots+k_a+1}-j_{k_1+\cdots+k_{a-1}+1}\geq
j_{k_1+\cdots+k_a}-j_{k_1+\cdots+k_{a-1}+1}=k_a-1.
\end{equation}
Hence, we obtain from the definition of the partition $\ga^{(a)}$ that
$$0<\ga^{(a)}_{b+j_{k_1+\cdots+k_a+1}-j_{k_1+\cdots+k_{a-1}+1}}\leq
\ga^{(a)}_{b+k_a-1},$$
which forces $b=1$ and $1+j_{k_1+\cdots+k_a+1}-j_{k_1+\cdots+k_{a-1}+1}=k_a$
since $\ell(\ga^{(a)})=k_a$ (cf. Remark \ref{reduce to 1}).
Thus, \eqref{length ka} must be an equality and consequently,
$
 j_{k_1+\cdots+k_a+1}=j_{k_1+\cdots+k_a}.
$
On the other hand, the fact that $w$ is standard implies $
i_{k_1+\cdots+k_a}\geq i_{k_1+\cdots+k_a+1}.$ Hence,
\begin{align*}
\ga^{(a)}_{b+j_{k_1+\cdots+k_a+1}-j_{k_1+\cdots+k_{a-1}+1}}
&=\ga^{(a)}_{k_a}=j_{k_1+\cdots+k_a}-i_{k_1+\cdots+k_a}+1\\
%&=j_{k_1+\cdots+k_a+1}-i_{k_1+\cdots+k_a}+1\\
&\leq j_{k_1+\cdots+k_a+1}-i_{k_1+\cdots+k_a+1}+1
=\ga^{(a+1)}_1=\ga^{(a+1)}_b
\end{align*}
proving \eqref{inequality}.

It remains to prove (SK3) holds. If $k_a=f^*_{a}-f^*_{a+1}=f_{m-a}-f_{m-a+1}$ for some $1\leq a\leq m-1$, then
we have
$$
j_{k_1+\cdots+k_a}=f_{m-a+1}+k_a-1=f_{m-a}-1=j_{k_1+\cdots+k_a+1}-1.
$$
This implies, by the choice of $k_a$ in the construction of $\udga_{\mathfrak s}$, that
$
i_{k_1+\cdots+k_a}\geq i_{k_1+\cdots+k_a+1}.
$
Hence,
\begin{align*}
\ga^{(a)}_{k_a}
&=j_{k_1+\cdots+k_a}-i_{k_1+\cdots+k_a}+1\\
%&=j_{k_1+\cdots+k_a+1}-i_{k_1+\cdots+k_a}\\
&\leq j_{k_1+\cdots+k_a+1}-i_{k_1+\cdots+k_a+1}=\ga^{(a+1)}_1-1,
\end{align*}
proving (SK3) and so $\udga_{\mathfrak s}\in\mc K^s_{f^*}(r)$. In other words, we have proved ${\rm im}(\eta_{f})\subseteq\mc K^s_{f^*}(r)$.
The last assertion follows from the definition.

(2)
By~\eqref{equiv}, the map $\eta_{f}$ is injective.
Let $\udga=(\ga^{(1)},\ga^{(2)},\ldots,\ga^{(m)})\in\mc K^s_{f^*}(r)$ be a standard multipartition (or satisfy (SK1)--(SK3)). We now construct an integral multisegment $\fks\in\mathscr S_r^\Z$ such that $\udu_\fks=\udu_f$ and $\udga_\fks=\udga.$

Set $k_a=\ell(\ga^{(a)})$ for $1\leq a\leq m$ and let $t=k_1+\cdots+k_m$.
Given $1\leq c\leq t$, assume that
$c=k_1+k_2+\cdots+k_{a-1}+b$ with $1\leq b\leq\ell(\ga^{(a)})=k_a$ for some $1\leq a\leq m$ and define
\begin{equation}\label{bijectionji}
j_c=f_{m-a+1}+b-1, \quad i_c=f_{m-a+1}-\ga^{(a)}_b+b.
\end{equation}
Clearly, $i_c\leq j_c$ for $1\leq c\leq t$ since $\ga^{(a)}_b\geq 1$.
We claim that
\begin{equation}\label{standard}
{{j_1j_2\ldots j_t}\choose{i_1i_2\ldots i_t}}\in\mc W^s(r).
\end{equation}
Indeed, firstly, since $\udga=(\ga^{(1)},\ga^{(2)},\ldots,\ga^{(m)})\in\mc K^s_{f^*}(r)$,
we have $k_a\leq f_{m-a}-f_{m-a+1}+1$ by (SK2). In other words,
$
f_{m-a+1}+k_a-1\leq f_{m-a},
$
which together with \eqref{bijectionji} implies that
\begin{equation}\label{bijectionjka}
j_{k_1+\cdots+k_a}\leq j_{k_1+\cdots+k_a+1}
\end{equation}
for $1\leq a\leq m-1$.
If $j_{k_1+\cdots+k_a}= j_{k_1+\cdots+k_a+1}$ for some $1\leq a\leq m-1$,
then $f_{m-a+1}+k_a-1=f_{m-a}$ and so $\ell(\ga^{(a)})=k_a=f_{m-a}-f_{m-a+1}+1$.
Then, by (SK2),
\begin{align*}
i_{k_1+\cdots+k_a}
=f_{m-a+1}-\ga^{(a)}_{k_a}+k_a
=f_{m-a}-\ga^{(a)}_{k_a}+1
\geq f_{m-a}-\ga^{(a+1)}_1+1=i_{k_1+\cdots+k_a+1}.
\end{align*}
Thus, this together with \eqref{bijectionji} and \eqref{bijectionjka} shows that
$j_1\leq\ldots\leq j_t$ and $i_c\geq i_{c+1}$ whenever $j_c=j_{c+1}$, proving~\eqref{standard}.

Now,
if $j_{k_1+\cdots+k_a}=j_{k_1+\cdots+k_a+1}-1$ for some $1\leq a\leq m-1$, then by~\eqref{bijectionji}
we have $f_{m-a}=f_{m-a+1}+k_a$ and, by (SK3)
\begin{align*}
i_{k_1+\cdots+k_a}
=f_{m-a+1}-\ga^{(a)}_{k_a}+k_a
=f_{m-a}-\ga^{(a)}_{k_a}
\geq f_{m-a}-\ga^{(a+1)}_1+1
=i_{k_1+\cdots+k_a+1}.
\end{align*}
This and \eqref{bijectionji} show that $\udu_{\mathfrak s}=\udu_f$ and $\udga_{\mathfrak s}=\udga$
if we put $\fks=\flat{{j_1j_2\ldots j_n}\choose{i_1i_2\ldots i_n}}$. Hence, ${\rm im}(\eta_{f})\supseteq\mc K^s_{f^*}(r)$.
\end{proof}

\subsection{Irreducible objects in $\afHr$-${\sf mod}^{\Z}$.} For any given multisegment $\fks\in\mathscr S^{\Z}_{r}$, let $\und\ga_\fks$ and $\udu_\fks$ be defined as in \eqref{ga_s} and \eqref{us}.
\begin{thm} \label{thm:VsDla} If $\fks\in\mathscr S^{\Z}_{r}$, then the irreducible $\afHr$-module $V_\fks$
is isomorphic to  the inflated $\afHr$-module $D^{\und\ga_\fks'}$. In particular,
 $V_\fks$ affords an $\mc H_{\underline{u}_\fks}(r)$-module.\end{thm}
\begin{proof}
Let $\bfs\in\fks$ be standard. By Proposition \ref{prop:stoK}(1), $\und\ga_\fks\in\mc K_{\udu_{\mfs}^{-1}}(r)$ and $\bfs=\bfs^{\sfc}_{\und\ga'_\fks;\udu_\fks}$.
Thus, by Corollary~\ref{cor:hdSpecht}, the Specht module $S^{\und\ga'_\fks}$ has the irreducible head $D^{\und\ga'_\fks}$. On the other hand, by
Lemma~\ref{lem:Ibfs}, there exists a surjective $\afHr$-homomorphism
$I_{\bfs}\twoheadrightarrow S^{\und\ga'_\fks}$, which extends to a surjective
$\afHr$-homomorphism $
I_{\bfs}\twoheadrightarrow D^{\und\ga'_\fks}$.
This together with Proposition~\ref{prop:Ro3} gives the required isomorphism.
The last assertion is clear since $D^{\und\ga'_\fks}$ is an $\mc H_{\underline{u}_\fks}(r)$-module.\end{proof}

In the case $m=1, u_1=1$, Theorem~\ref{thm:VsDla} recovers \cite[Proposition 8.2]{DF}.

%\begin{rem}
%Fix $\mfs\in\mathscr S^\Z_r$.
%It is known \cite{Ar1} that the decomposition numbers for the Ariki-Koike algebras $\mc H_{\udu_{\mfs}}(r)$
%over $\C$ can be computed in terms of the canonical basis of an associated integral highest weight module of
%the Lie algebra $\mathfrak{gl}_{\infty}(\C)$.
%Using Theorem~\ref{thm:VsDla} and Lemma~\ref{lem:Specht}, one can derive a dimension formula for $V_{\mfs}$ and the decomposition
%of the restriction $V_{\mfs}$ to $\Ha$ in terms of Spech modules $S^\la$ for $\la\in\mc P(r)$.
%\end{rem}

We now characterise the Kleshchev multipartition $\udga_\fks$ constructed from a standard word $w$ with $\fks=\flat(w)$. For $r\geq 1$, set
$$
\mc K^s(r):=\dot{\bigcup}_{f\in\mathfrak F}\mc K^s_{f}(r)=\dot{\bigcup}_{f\in\mathfrak F}\mc K^s_{f^*}(r),
$$
where $\dot{\cup}$ means a disjoint union.
Observe that
$
\mathscr S^\Z_r=\dot{\bigcup}_{f\in\mathfrak F}
\mathscr S^\Z_{r,f}.
$

For $\udga\in \mc K^s_{\udu}(r)$, recall that $D^{\udga'}$ is the irreducible
$\mc H_{\udu^{-1}}(r)$-module associated to $\udga'$ defined in Proposition~\ref{prop:BK}.

\begin{thm}\label{thm:standardK}
The bijective maps $\eta_{f}$ ($f\in\mathfrak F$) give rise to a bijection
$$\aligned
\eta:\mathscr S^\Z_r&\longrightarrow \mc K^s(r),\quad
\mathfrak s\longmapsto \udga_{\mathfrak s}.\endaligned$$
Moreover, the set $\{D^{\udga'}~|~\udga\in\mc K^s(r)\}$ after inflation forms
 a complete set of non-isomorphic irreducible objects in the category $\afHr$-{\sf mod}$^\Z$.
\end{thm}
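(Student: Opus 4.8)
The plan is to deduce the theorem by assembling the fibrewise bijections of Proposition~\ref{prop:etau} with the module identification of Theorem~\ref{thm:VsDla}; no new computation is needed, but the disjoint unions require a little care.

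\emph{The bijection $\eta$.} Since $q$ is not a root of unity, distinct powers of $q^2$ are distinct, so $f\mapsto\udu_f$ is injective on $\mathfrak F$; hence every $\fks\in\mathscr S^\Z_r$ lies in exactly one $\mathscr S^\Z_{r,f}$, the one with $\udu_f=\udu_\fks$, and $\mathscr S^\Z_r=\dot{\bigcup}_{f\in\mathfrak F}\mathscr S^\Z_{r,f}$ is a genuine partition. On the other side $\mc K^s(r)=\dot{\bigcup}_{f\in\mathfrak F}\mc K^s_{f^*}(r)$ by definition, which is the same disjoint union as $\dot{\bigcup}_f\mc K^s_f(r)$ because $(\ )^*$ permutes $\mathfrak F$. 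By Proposition~\ref{prop:etau}, for each $f\in\mathfrak F$ the map $\eta_f:\mathscr S^\Z_{r,f}\to\mc K^s_{f^*}(r)$, $\fks\mapsto\udga_\fks$, is a bijection; taking the disjoint union of the $\eta_f$ over $f\in\mathfrak F$ then yields the asserted bijection $\eta:\mathscr S^\Z_r\to\mc K^s(r)$, $\fks\mapsto\udga_\fks$. (An element of $\mc K^s(r)$ carries the index $f$ of its component; equivalently, by \eqref{equiv} the pair $(\udga_\fks,\udu_\fks)$ determines $\fks$, which is what makes $\eta$ injective.)

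\emph{The classification.} By the reduction to integral weights recalled in \S2 (cf.\ \cite{Kl,Gr}), the simple objects of $\afHr$-${\sf mod}^{\Z}$ are, up to isomorphism, exactly the modules $V_\fks$ with $\fks\in\mathscr S^\Z_r$, and these are pairwise non-isomorphic by Proposition~\ref{prop:Ro2}(4). Given $\fks\in\mathscr S^\Z_r$, let $f\in\mathfrak F$ satisfy $\udu_f=\udu_\fks$, so that $\udga_\fks\in\mc K^s_{f^*}(r)$ (Proposition~\ref{prop:etau}) and $D^{\udga_\fks'}$ is a well-defined irreducible $\mc H_{\udu_f}(r)$-module; Theorem~\ref{thm:VsDla} then gives an $\afHr$-module isomorphism $V_\fks\cong D^{\udga_\fks'}$ after inflation. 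Letting $\fks$ run over $\mathscr S^\Z_r$ and using the bijection $\eta$, we see that the inflations of $D^{\udga'}$ for $\udga\in\mc K^s(r)$ are, up to isomorphism, precisely the $V_\fks$ with $\fks\in\mathscr S^\Z_r$; hence $\{D^{\udga'}\mid\udga\in\mc K^s(r)\}$ after inflation is a complete set of simple objects of $\afHr$-${\sf mod}^{\Z}$. For irredundancy, if the inflations of $D^{\udga_1'}$ and $D^{\udga_2'}$ with $\udga_i=\eta(\fks_i)$ are isomorphic, then $V_{\fks_1}\cong V_{\fks_2}$ forces $\fks_1=\fks_2$, whence $\udga_1=\udga_2$.

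\emph{Where the effort lies.} The substantive input is already in place: Proposition~\ref{prop:etau} (the explicit back-and-forth between standard words and multipartitions satisfying (SK1)--(SK3), identifying ${\rm im}(\eta_f)$ with $\mc K^s_{f^*}(r)$) and Theorem~\ref{thm:VsDla} (the isomorphism $V_\fks\cong D^{\udla_\fks}$, which rests on Lemma~\ref{lem:Ibfs}, Corollary~\ref{cor:hdSpecht} and Proposition~\ref{prop:Ro3}). The only real point to watch in the present theorem is bookkeeping with the disjoint unions: one must keep the parameter $f$ attached to each $\udga\in\mc K^s(r)$, since the same multipartition may be standard Kleshchev relative to several $f$'s (already for $m=1$), and the corresponding inflated modules $D^{\udga'}$ are then genuinely non-isomorphic, their $X_1$-eigenvalues being distinct powers of $q^2$. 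I do not anticipate any serious obstacle beyond this.
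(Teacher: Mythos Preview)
Your proof is correct and follows essentially the same approach as the paper: both deduce the bijection immediately from Proposition~\ref{prop:etau} applied fibrewise, and both obtain the classification by combining Theorem~\ref{thm:VsDla} with Proposition~\ref{prop:Ro2}. Your version is somewhat more expansive about the disjoint-union bookkeeping and the irredundancy argument, but the substance is identical.
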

\begin{proof}
By Proposition~\ref{prop:stoK}(2), the first assertion is clear.
For $\udga\in\mc K^s_{f}(r)$, suppose $\eta^{-1}(\udga)=\mathfrak s$.
Then $\udga=\udga_{\mfs},\udu_f= \udu_{\mathfrak s}^{-1}$ and by Theorem~\ref{thm:VsDla} we have
$
D^{\udga'_{\mathfrak s}}\cong V_{\mathfrak s}.
$
Hence,
the set $\{D^{\udga'}~|~\udga\in\mc K^s(r)\}$
coincides with the set of modules $V_{\mathfrak s}$ with $\mathfrak s\in\mathscr S^\Z_r$ up to isomorphism.
Then the last assertion follows from Proposition~\ref{prop:Ro2}(3) by restricting to $\mathscr S_r^\Z$.
\end{proof}

\section{Identification of irreducible representations
of integral type}
%{\bf Multisegments associated to Kleshchev multipartitions.}
In this section, we shall consider the inflation of irreducible representations
of Ariki-Koike algeras of integral type via the canonical surjective homomorphism
$\varpi_{\udu}:\afHr\to\AKa$ given in \eqref{pi}
and identify the corresponding integral multisegments. We will extend the isomorphisms
$D^{\udla'}\cong V_\fks$ for $\udla\in\mc K_f^s(r)$, where $\fks=\overline{\bfs_{\udla';f}^{\sfc}}$
(see Proposition~\ref{prop:stoK}(1) and Theorem~\ref{thm:VsDla}), to all $\udla\in\mc K_f(r)$; see Definition \ref{KMP}.

\subsection{Row residual segments of a multipartition {\rm (cf.\,\S4.1)}.}
Given a sequence of integral segments  $\bfs=(\sfs_1,\sfs_2,\ldots,\sfs_t)$ ($t\geq 1$)
with $\sfs_k=(q^{2i_k},\ldots,q^{2j_k})$, by Definition \ref{defn:order}, $\bfs$ is anti-standard  if it satisfies
\begin{equation}\label{orderVa}
i_1\geq i_2\geq\cdots\geq i_t,\text{ and } j_k\leq j_{k+1}
\text{ whenever } i_k=i_{k+1}.
\end{equation}
This order is exactly the so-called {\em right order} introduced in \cite[\S 2.3]{Va}.
%In other words, $\bfs$ is anti-standard if and only if $\bfs^{-1}$ is standard.
Observe that any multisegment  $\mfs\in\mathscr S^\Z_r$ contains
a unique anti-standard sequence $\bfs=(\sfs_1,\ldots,\sfs_t)\in\mfs$.

For a multisegment $\mfs=\bar{\bfs}\in\mathscr S_r$, set $\mfs^{-1}=\overline{\bfs^{-1}}=\overline{(\sfs_1^{-1},\ldots,\sfs_t^{-1})}$.
By Definition \ref{defn:order}(3), the map
$$(\;)^{-1}:\mathscr S_r^\Z\longrightarrow\mathscr S_r^\Z$$
is a bijection.

%Given $\bfs=(\sfs_1,\ldots,\sfs_t)\in\mathscr C_r^t$ with $t\geq 1$ and
%$\sfs_k=(q^{2i_k},\ldots,q^{2j_k})$ for $1\leq k\leq t$, let
%\begin{equation}\label{orderVa}
%\bfs^\theta=(\sfs_{l_1},\ldots,\sfs_{l_t})\in\mathscr C_r^t
%\end{equation}
%be the ordered collection of segments
%obtained by reordering the segments in $\bfs$ such that
%$i_{l_1}\geq i_{l_2}\geq\cdots\geq i_{l_t}$ and $j_{l_{a}}<j_{l_{a+1}}$
%if $i_{l_a}=i_{l_{a+1}}$ for $1\leq a\leq t-1$
For $\mfs\in\mathscr S^\Z_r$, assume that $\bfs=(\sfs_1,\ldots,\sfs_t)\in\mfs$ is anti-standard
and define
\begin{equation}\label{Ms}
M_\mfs:={\rm hd}({\rm ind}^{\afHr}_{\mc H_{\vtg}(\mu)}\C^+_{\sfs_1}\otimes\cdots\otimes\C^+_{\sfs_t})={\rm hd}({\rm ind}^{\afHr}_{\mc H_{\vtg}(\mu)}I^+_\bfs)
\end{equation}
(see \eqref{trivial-sign} and compare \eqref{Is}), where $\mu=(|\sfs_{1}|,\ldots,|\sfs_{t}|)$.
%By \cite[Theorem 2.2]{Va}, $M_\fks$ is irreducible. We will reproduce the fact in Lemma \ref{lem:VsMs}.

%Recall that $\udu=(u_1,\ldots,u_m)$ with
%$u_k=q^{2f_k}$ and $f_1\geq f_2\geq\cdots\geq f_m$.
Given $\udla=(\la^{(1)},\ldots,\la^{(m)})\in\mc P_m(r)$ and $f\in\mathfrak F_m$, define
the {\it i-th row residual segment} of $\la^{(k)}$ relative to $\udu=\udu_f$:
$$
\sfs^{(k)}_i=(q^{2(f_k+1-i)},q^{2(f_k+2-i)},\ldots,q^{2(f_k+\la^{(k)}_i-i)})\quad(1\leq k\leq m,1\leq i\leq\ell(\la^{(k)}))$$
by reading the residues of the nodes in
the $i$th row of $\la^{(k)}$ from left to right, and then form the sequence of segments in the order of row 1, row 2, ... of the first partition of $\udla$, of the second partition, and so on:
\begin{equation}\label{srla}
\bfs^{\sfr}_{\udla;f}=(\sfs^{(1)}_1,\ldots,\sfs^{(1)}_{\ell(\la^{(1)})},
\ldots,\sfs^{(m)}_1,\ldots,\sfs^{(m)}_{\ell(\la^{(m)})}).
\end{equation}

By~\eqref{scla} and~\eqref{srla}, a direct calculation shows that
\begin{equation}\label{c&r}
(\bfs^{\sfc}_{\udla;f})^{-1}=\bfs^{\sfr}_{\udla';f^*}
\end{equation}
for any $\udla\in\mc P_m(r)$ and $f\in\mathfrak F_m$.
%Hence, $ \bfs^{\sfr}_{\udla;f}$ is anti-standard by Proposition~\ref{prop:stoK}(1).

\begin{prop}[{\cite[Theorem 3.4]{Va}}]\label{prop:Va}
Let $\udla=(\la^{(1)},\ldots,\la^{(m)})\in\mc K_{f}(r)$ with $f\in\mathfrak F_m$ and let $\la=(|\la^{(1)}|,\ldots,|\la^{(m)}|)$.
Assume that $\mfs$ is the multisegment containing $\bfs^{\sfr}_{\udla;f}$.
Then the $\afHr$-module ${\rm hd}\big({\rm ind}^{\afHr}_{\mc H_{\vtg}(\la)}
({\rm ev}^*_{u_1}(S^{\la^{(1)}})\otimes\cdots\otimes {\rm ev}^*_{u_m}(S^{\la^{(m)}}))\big)$
is irreducible and isomorphic to $M_{\fks}$.
%$$
%M_{\fks}\cong{\rm hd}\,{\rm ind}^{\afHr}_{\mc H_{\vtg}(\la)}({\rm ev}^*_{u_1}(S^{\la^{(1)}})\otimes\cdots\otimes {\rm ev}^*_{u_m}(S^{\la^{(m)}})).
%$$
\end{prop}

\begin{rem}
It is straightforward to check that there exists an automorphism $\sharp$ on $\afHr$ defined by
\begin{align}
\sharp: \afHr&\longrightarrow\afHr,\quad
T_i\longmapsto -q^2T_i^{-1},\quad
X_k\longmapsto X_k^{-1}\label{sharp}
\end{align}
for $1\leq i\leq r-1$ and $1\leq k\leq r$.
Given an $\afHr$-module $M$, we denote by $M^{\sharp}$ the $\afHr$-module obtained from $M$ by
twisting the action via the automorphism $\sharp$.
It is easy to deduce that for
an $\mc H_{\vtg}(a)$-module $M$ and an $\mc H_{\vtg}(b)$-module $N$ with
$1\leq a,b\leq r$ and $a+b=r$,
we have
\begin{equation}\label{indsharp}
\big({\rm ind}^{\afHr}_{\mc H_{\vtg}(a)\otimes\mc H_{\vtg}(b)}(M\otimes N)\big)^\sharp
\cong {\rm ind}^{\afHr}_{\mc H_{\vtg}(a)\otimes\mc H_{\vtg}(b)}(M^\sharp\otimes N^\sharp).
\end{equation}
\end{rem}

\subsection{Identification of irreducible representations.} We now prove the main result of this section.
\begin{lem}\label{lem:VsMs}
For any $\mfs\in\mathscr S^\Z_r$, we have
$
M^{\sharp}_{\mfs^{-1}}\cong V_\fks.
$
%In particular, $M_{\fks}$ is irreducible.
\end{lem}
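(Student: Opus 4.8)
The plan is to compare the two irreducible modules by comparing the (twisted, dualized) induced modules they arise as heads of, and then identifying the resulting $\afHr$-module with $V_\fks$ via its $\Ha$-composition-factor fingerprint, exactly as in the proof of Proposition~\ref{prop:Ro2}(3).

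First I would pick a concrete combinatorial realization: by Proposition~\ref{prop:stoK}, $\fks$ is of the form $\overline{\bfs^{\sfc}_{\udla_\fks;\udu_\fks}}$ for a suitable sincere Kleshchev multipartition $\udga_\fks$ (relative to $\udu_\fks^{-1}$) with $\udla_\fks=\udga_\fks'$. Write $\udla=\udla_\fks$, $f$ for the corresponding element of $\mathfrak F_m$, so $\udu_\fks=\udu_f$. The combinatorial input I want is the relation between column-reading and row-reading of a multipartition under the two bijections $(\;)^{-1}$ and $(\;)'$: this is exactly \eqref{c&r}, $(\bfs^{\sfc}_{\udla;f})^{-1}=\bfs^{\sfr}_{\udla';f^*}$. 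Thus $\mfs^{-1}$ is the multisegment containing $\bfs^{\sfr}_{\udga_\fks;f^*}$, and $\udga_\fks\in\mc K_{\udu_{f^*}}(r)$ by Proposition~\ref{prop:stoK}. Now Proposition~\ref{prop:Va} applies to $\udga_\fks$ and $f^*$ and gives
$$
M_{\mfs^{-1}}\cong{\rm hd}\,{\rm ind}^{\afHr}_{\mc H_{\vtg}(\mu)}
\big({\rm ev}^*_{q^{-2f_m}}(S^{(\ga_\fks)^{(1)}})\otimes\cdots\otimes {\rm ev}^*_{q^{-2f_1}}(S^{(\ga_\fks)^{(m)}})\big),
$$
where $\mu=(|(\ga_\fks)^{(1)}|,\dots,|(\ga_\fks)^{(m)}|)$.

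Next I would apply the automorphism $\sharp$ of \eqref{sharp}. Using \eqref{indsharp} repeatedly (and associativity of induction) the $\sharp$-twist moves inside the induction product, so I need to understand $({\rm ev}^*_{u}(S^\la))^\sharp$ for a single partition $\la$. Since $\sharp$ sends $X_k\mapsto X_k^{-1}$ and $T_i\mapsto -q^2T_i^{-1}$, and ${\rm ev}^*_u(S^\la)$ is the inflation along ${\rm ev}_u$ of the Specht module $S^\la$ on which $X_1$ acts as $u$ and the $J_k$-eigenvalues are the residues ${\rm res}_{t_\la}(k)=q^{2(j-i)}$ (Corollary~\ref{cor:finJMact}), twisting by $\sharp$ inverts all these eigenvalues: $X_1\mapsto u^{-1}$, residues $\mapsto q^{-2(j-i)}$. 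On the Hecke side, $-q^2T_i^{-1}=T_i+(q^2-1)$ has the effect of the sign-twist composed with $q\mapsto q^{-1}$ on $\Ha$, which carries the left cell module $E_\mu$ to $E_{\mu'}$; equivalently it carries $S^\la$ (up to the relevant duality) to $S^{\la'}$. Concretely I expect $({\rm ev}^*_{q^{-2f}}(S^\la))^\sharp\cong {\rm ev}^*_{q^{2f}}(S^{\la'})$ after accounting for the residue reversal (a row of $\la$ at height $i$ of length $\la_i$ becomes a column of $\la'$, and the residues $q^{2(f+\cdot-i)}$ become $q^{2(-f-\cdot+i)}$, which is precisely the residue pattern of a row residual segment of $\la'$ relative to $-f$, i.e. relative to $f$ after the overall sign on $\mathfrak F$). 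Feeding this back,
$$
M^\sharp_{\mfs^{-1}}\cong{\rm hd}\,{\rm ind}^{\afHr}_{\mc H_{\vtg}(\mu)}
\big({\rm ev}^*_{q^{2f_m}}(S^{\la^{(m)}})\otimes\cdots\otimes {\rm ev}^*_{q^{2f_1}}(S^{\la^{(1)}})\big)
\cong D^{\udla}
$$
by Proposition~\ref{prop:BK}(1), since $\udla'=\udga_\fks\in\mc K_{f^*}(r)$. Finally, $D^{\udla}\cong V_\fks$ by Theorem~\ref{thm:VsDla} (as $\udla=\udla_\fks$). Since $V_\fks$ is irreducible, so is $M_\fks$ (replacing $\fks$ by $\fks^{-1}$, which is again in $\mathscr S_r^\Z$ via the bijection $(\;)^{-1}$), giving the ``hence'' clause.

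The main obstacle I anticipate is pinning down exactly how $\sharp$ acts on a single inflated Specht module ${\rm ev}^*_u(S^\la)$ — in particular getting the partition transpose $\la\mapsto\la'$, the residue inversion, and the evaluation parameter $u\mapsto u^{-1}$ all to line up with a clean statement like $({\rm ev}^*_u(S^\la))^\sharp\cong{\rm ev}^*_{u^{-1}}(S^{\la'})$. This requires a careful bookkeeping argument: one can either verify it on generators (the image of $z_\la$, using the JM-eigenvalue computation of Corollary~\ref{cor:finJMact} and the fact that $\sharp$ essentially implements $q\mapsto q^{-1}$ together with the sign automorphism on $\Ha$, which is known to swap Specht modules $S^\la\leftrightarrow S^{\la'}$ up to duality), or one can avoid the issue by arguing at the level of $\Ha$-composition factors: $M^\sharp_{\mfs^{-1}}$ restricted to $\Ha$ contains $E_{\mu(\bfs)}$ as a constituent for $\bfs\in\fks$ standard, and an irreducible $\afHr$-module with that property is $V_\fks$ by Proposition~\ref{prop:Ro2}(3),(4). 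I would present the proof via the second route to keep the Specht-twist computation to a minimum, invoking only the eigenvalue bookkeeping needed to see that the $X_k$-weights of $M^\sharp_{\mfs^{-1}}$ are those of $V_\fks$ and that $E_{\mu(\bfs)}$ appears.
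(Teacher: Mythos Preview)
Your argument is correct (modulo the routine verification of $({\rm ev}^*_u(S^\la))^\sharp\cong{\rm ev}^*_{u^{-1}}(S^{\la'})$, which does hold and is exactly what the paper establishes later using~\eqref{Spechtsharp}), but it is considerably more circuitous than the paper's proof. The paper works entirely at the level of one-dimensional segment modules: take the standard $\bfs=(\sfs_1,\ldots,\sfs_t)\in\fks$, note that $\bfs^{-1}$ is then the anti-standard representative of $\fks^{-1}$, so by definition $M_{\fks^{-1}}={\rm hd}\,{\rm ind}^{\afHr}_{\mc H_\vtg(\mu)}(\C_{\sfs_1^{-1}}\otimes\cdots\otimes\C_{\sfs_t^{-1}})$; apply $\sharp$ inside the induction via~\eqref{indsharp}; observe the elementary identity $\C_{\sfs^{-1}}^\sharp\cong\C_{\widetilde\sfs}$ for any segment $\sfs$; and conclude using Lemma~\ref{lem:induced} and Proposition~\ref{prop:Ro3} that $M^\sharp_{\fks^{-1}}\cong{\rm hd}(I_\bfs)\cong V_\fks$. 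No multipartitions, no Vazirani, no Theorem~\ref{thm:VsDla}.

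What your route buys is nothing for this lemma, but it is essentially the argument the paper runs \emph{after} Lemma~\ref{lem:VsMs} to prove Theorem~\ref{thm:identify}: there the flow is Lemma~\ref{lem:VsMs} $+$ \eqref{c&r} $+$ Proposition~\ref{prop:Va} $+$ the Specht-twist identity $\Rightarrow$ $V_{\bfs^\sfc_{\udla;f}}\cong D^{\udla}$ for all Kleshchev $\udla'$. You have effectively reversed that implication, feeding in Theorem~\ref{thm:VsDla} (the special case where $\udla=\udla_\fks$ is standard Kleshchev) to recover the lemma. This is logically sound since Theorem~\ref{thm:VsDla} is proved independently, but it obscures the simple segment-level reason the lemma is true and makes the ``main obstacle'' you flag (the Specht-module $\sharp$-twist) look essential when in fact it is entirely avoidable here.
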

\begin{proof} %Clearly, the second assertion follows from the first assertion.
Let $\bfs=(\sfs_1,\ldots,\sfs_t)\in\mfs$ be standard.
By Definition~\ref{defn:order},
$\bfs^{-1}=(\sfs_1^{-1},\ldots,\sfs_t^{-1})$ is the unique anti-standard element in $\mfs^{-1}$
and hence, by~\eqref{Ms},
$
M_{\mfs^{-1}}=
{\rm hd}({\rm ind}^{\afHr}_{\mc H_{\vtg}(\mu)}(I_{\bfs^{-1}}^+), %\C^+_{\sfs_1^{-1}}\otimes\cdots\otimes\C^+_{\sfs_t^{-1}}).
$ where $\mu=(|\sfs_1^{-1}|,\ldots,|\sfs_t^{-1}|)$.
This together with~\eqref{indsharp} implies
$
M^\sharp_{\mfs^{-1}}\cong
{\rm hd}({\rm ind}^{\afHr}_{\mc H_{\vtg}(\mu)}(\C^+_{\sfs_1^{-1}})^\sharp\otimes\cdots\otimes(\C^+_{\sfs_t^{-1}})^\sharp).
$
Observe that $(\C^+_{\sfs^{-1}})^\sharp\cong \C_{\sfs}$ for any segment $\sfs$
and hence,
\begin{align*}
M^\sharp_{\mfs^{-1}}
\cong
{\rm hd}({\rm ind}^{\afHr}_{\mc H_{\vtg}(\mu)}\C_{\sfs_1}\otimes\cdots\otimes\C_{\sfs_t})
={\rm hd} (I_{\bfs})\cong V_\bfs\cong V_{\mfs},
\end{align*}
where the second isomorphism follows from~Proposition \ref{prop:Ro3}
since $\bfs$ is standard.
\end{proof}

%\begin{rem}
%It is easy to check that $\afHr$ affords another automorphism
%$\diamond$ which sends $ T_i,X_k$ to $ T_{r-i}, X_{r+1-k}^{-1}$ for $1\leq i\leq r-1$ and $1\leq k\leq r$, respectively, and moreover
%the composition $\sharp\circ\diamond$ is exactly the Zelevinsky's involution~\cite{Ze} on $\afHr$.
%By~\eqref{Ms}  and \cite[Corollary 6.1]{Va} one can directly compute that $M^{\diamond}_{\mfs}\cong M_{\mfs^{-1}}$.
%Hence, Lemma~\ref{lem:VsMs} implies that $V_{\mfs}$ is isomorphic to the twist of $M_{\mfs}$
%via Zelevinsky's involution.
%\end{rem}

Observe also that the automorphism $\sharp$ on $\afHr$ also defines an automorphism
on the Hecke algebra $\Ha$, denoted again by $\sharp$, which maps $T_w$ to $(-q^2)^{l(w)}(T_{w^{-1}})^{-1}$
for $w\in\mf S_r$. Since $q$ is not a root of unity, each Specht module $S^\la$
is self-dual for $\la\in\mc P(r)$ with respect to the automorphism on $\Ha$ induced by $*:T_w\mapsto T_{w^{-1}}$ for $w\in\mf S_r$.
Furthermore it is known \cite[Theorem 3.5]{DJ2} that
\begin{equation}\label{Spechtsharp}
(S^{\la})^\sharp\cong S^{\la'}
\end{equation}
for $\la\in\mc P(r)$.

Recall that for $\udla\in\mc K_{f^*}(r)$ with $f\in\mathfrak F$,
$D^{\udla'}$ is an irreducible $\mc H_{\udu_f}(r)$-module, which can be also regarded as an
$\afHr$-module by inflation.

We are now ready to generalise Theorem \ref{thm:VsDla} to arbitrary Kleshchev multipartitions.
\begin{thm}\label{thm:identify}
Suppose $\udla\in\mc P_m(r)$ such that $\udla'\in\mc K_{f^*}(r)$ for some $f\in\mathfrak F_m$.
Then there is an isomorphism of $\afHr$-modules:
$
D^{\und{\la}}\cong V_{\bfs^{\sfc}_{\und{\la};f}}.
$

\end{thm}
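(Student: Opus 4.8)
The plan is to realise $D^{\und\la}$ as the head of an explicit induced $\afHr$-module, apply the automorphism $\sharp$ of \eqref{sharp}, and then identify the twisted module via Proposition~\ref{prop:Va} and Lemma~\ref{lem:VsMs}. First I would note that, since $\und\la'\in\mc K_{f^*}(r)$, Corollary~\ref{cor:hdSpecht} gives $D^{\und\la}\cong{\rm hd}(S^{\und\la})$, while Proposition~\ref{prop:induceSpecht} (applied with $\udu=\udu_f$, all $u_i=q^{2f_i}$) identifies $S^{\und\la}$, as an $\afHr$-module, with
$$
{\rm ind}^{\afHr}_{\mc H_{\vtg}(\mu)}\big({\rm ev}^*_{q^{2f_m}}(S^{\la^{(m)}})\otimes\cdots\otimes{\rm ev}^*_{q^{2f_1}}(S^{\la^{(1)}})\big),\qquad \mu=(|\la^{(m)}|,\ldots,|\la^{(1)}|).
$$

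Next I would twist the whole picture by $\sharp$. The key local ingredient is that for $u\in\C^*$ and any partition $\la$ there is an $\mc H_{\vtg}$-module isomorphism $\big({\rm ev}^*_u(S^\la)\big)^\sharp\cong{\rm ev}^*_{u^{-1}}(S^{\la'})$: indeed $\sharp$ restricts to the automorphism of $\Ha$ sending $T_w\mapsto(-q^2)^{l(w)}T^{-1}_{w^{-1}}$, the homomorphisms ${\rm ev}_u\circ\sharp$ and $\sharp\circ{\rm ev}_{u^{-1}}$ agree on the generators $T_1,\ldots,T_{r-1},X_1,X_1^{-1}$ of $\afHr$, and $(S^\la)^\sharp\cong S^{\la'}$ by \eqref{Spechtsharp}. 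Feeding this into \eqref{indsharp}, extended over the $m$ tensor factors (note $\sharp$ preserves the subalgebra $\mc H_{\vtg}(\mu)$ and acts factor by factor), and writing $\und\mu=\und\la'$ so that $\mu^{(j)}=\la^{(m-j+1)'}$ and $q^{-2f_{m-j+1}}=q^{2f^*_j}$, I obtain
$$
(S^{\und\la})^\sharp\cong{\rm ind}^{\afHr}_{\mc H_{\vtg}(\mu)}\big({\rm ev}^*_{q^{2f^*_1}}(S^{\mu^{(1)}})\otimes\cdots\otimes{\rm ev}^*_{q^{2f^*_m}}(S^{\mu^{(m)}})\big),\qquad \mu=(|\mu^{(1)}|,\ldots,|\mu^{(m)}|).
$$
Since $\und\mu=\und\la'\in\mc K_{f^*}(r)=\mc K_{\udu_{f^*}}(r)$ with $f^*\in\mathfrak F_m$, Proposition~\ref{prop:Va} applies verbatim to the pair $(\und\mu,f^*)$: this induced module has irreducible head isomorphic to $M_{\mfs}$, where $\mfs$ is the multisegment containing $\bfs^{\sfr}_{\und\la';f^*}$.

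To conclude, I would use that twisting by an algebra automorphism is an exact auto-equivalence and hence commutes with ${\rm hd}$, so $(D^{\und\la})^\sharp\cong{\rm hd}\big((S^{\und\la})^\sharp\big)\cong M_{\mfs}$; since $\sharp^2={\rm id}$ this gives $D^{\und\la}\cong M_{\mfs}^\sharp$. Applying Lemma~\ref{lem:VsMs} with $\mfs^{-1}\in\mathscr S^\Z_r$ playing the role of $\fks$ there yields $M_{\mfs}^\sharp=M^\sharp_{(\mfs^{-1})^{-1}}\cong V_{\mfs^{-1}}$. Finally, \eqref{c&r} together with the fact that $(\;)^{-1}$ is an involution on sequences of segments gives $(\bfs^{\sfr}_{\und\la';f^*})^{-1}=\bfs^{\sfc}_{\und\la;f}$, hence $\mfs^{-1}=\overline{\bfs^{\sfc}_{\und\la;f}}$, and therefore $D^{\und\la}\cong V_{\bfs^{\sfc}_{\und\la;f}}$, as desired.

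The main obstacle will not be any deep argument but keeping the combinatorial bookkeeping exact: one must check that the combined effect of $\sharp$ — replacing each $S^{\la^{(k)}}$ by $S^{\la^{(k)'}}$ and inverting each evaluation parameter $q^{2f_k}\mapsto q^{-2f_k}=q^{2f^*_{m-k+1}}$ — together with the convention $\und\la'=(\la^{(m)'},\ldots,\la^{(1)'})$ produces precisely the induced module that Proposition~\ref{prop:Va} attaches to $(\und\la',f^*)$, with $\bfs^{\sfr}_{\und\la';f^*}$ the correct row-residual sequence; this is where the identity \eqref{c&r} does the essential accounting between the column reading \eqref{scla} of $\und\la$ and the row reading \eqref{srla} of $\und\la'$. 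The only step beyond formal manipulation is the local identity $\big({\rm ev}^*_u(S^\la)\big)^\sharp\cong{\rm ev}^*_{u^{-1}}(S^{\la'})$, which reduces to a one-line check on generators together with \eqref{Spechtsharp}.
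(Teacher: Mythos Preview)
Your proposal is correct and follows essentially the same route as the paper: both arguments combine Proposition~\ref{prop:Va} (applied to $\und\la'\in\mc K_{f^*}(r)$), the local identity $({\rm ev}^*_u(S^\la))^\sharp\cong{\rm ev}^*_{u^{-1}}(S^{\la'})$ obtained from \eqref{Spechtsharp}, the induction-twisting formula \eqref{indsharp}, Lemma~\ref{lem:VsMs}, and the identity \eqref{c&r}. The only cosmetic difference is direction: the paper begins with $V_{\bfs^\sfc_{\und\la;f}}$, rewrites it as $M^\sharp_{\bfs^\sfr_{\und\la';f^*}}$ via Lemma~\ref{lem:VsMs} and \eqref{c&r}, and then untwists to reach $D^{\und\la}$, whereas you start from $D^{\und\la}$ and twist toward $V_{\bfs^\sfc_{\und\la;f}}$; the steps and the bookkeeping are otherwise identical.
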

\begin{proof}
By Lemma~\ref{lem:VsMs} and~\eqref{c&r}, the following holds
\begin{equation}\label{Vscla}
V_{\bfs^\sfc_{\und{\la};f}}
\cong M^\sharp_{\bfs^\sfr_{\udla';f^*}}.
\end{equation}
Since $\udla'\in\mc K_{f^*}(r)=\mc K_{\udu^{-1}_f}(r)$, by Proposition~\ref{prop:Va} we obtain
$$
M_{\bfs^{\sfr}_{\udla';f^*}}
\cong {\rm hd}\big({\rm ind}^{\afHr}_{\mc H_{\vtg}(\mu)}
{\rm ev}^*_{u_m^{-1}}(S^{\la^{(m)'}})\otimes\cdots\otimes
{\rm ev}^*_{u_1^{-1}}(S^{\la^{(1)'}})\big),
$$
where $\mu=(|\la^{(m)}|,\ldots,|\la^{(1)}|)$ and $u_k=q^{2f_k}$ for $1\leq k\leq m$.
This together with~\eqref{Vscla} and~\eqref{indsharp} gives rise to
\begin{align}
V_{\bfs^\sfc_{\und{\la};f}}
&\cong {\rm hd}({\rm ind}^{\afHr}_{\mc H_{\vtg}(\mu)}
{\rm ev}^*_{u_m^{-1}}(S^{\la^{(m)'}})\otimes\cdots\otimes
{\rm ev}^*_{u_1^{-1}}(S^{\la^{(1)'}}))^\sharp\notag\\
&\cong {\rm hd}~{\rm ind}^{\afHr}_{\mc H_{\vtg}(\mu)}
{\rm ev}^*_{u_m^{-1}}(S^{\la^{(m)'}})^\sharp\otimes\cdots\otimes
{\rm ev}^*_{u_1^{-1}}(S^{\la^{(1)'}})^\sharp. \label{Vsclaind}
\end{align}
By~\eqref{ev} and~\eqref{sharp}, one can easily check that
$
{\rm ev}^*_u(M)^\sharp \cong {\rm ev}^*_{u^{-1}}(M^\sharp)
$
for any $\Ha$-module $M$ and hence, by~\eqref{Spechtsharp},
$
{\rm ev}^*_u(S^\la)^\sharp \cong {\rm ev}^*_{u^{-1}}((S^\la)^\sharp)
\cong {\rm ev}^*_{u^{-1}}(S^{\la'}).
$
This means
\begin{align*}
{\rm ind}^{\afHr}_{\mc H_{\vtg}(\mu)}
({\rm ev}^*_{u_m^{-1}}(S^{\la^{(m)'}})^\sharp\otimes\cdots\otimes
{\rm ev}^*_{u_1^{-1}}(S^{\la^{(1)'}})^\sharp)
\cong {\rm ind}^{\afHr}_{\mc H_{\vtg}(\mu)}
({\rm ev}^*_{u_m}(S^{\la^{(m)}})\otimes\cdots\otimes
{\rm ev}^*_{u_1}(S^{\la^{(1)}})).
\end{align*}
Since $\udla'\in\mc K_{\udu_f^{-1}}(r)=\mc K_{f^*}(r)$, it follows from \eqref{Vsclaind} and Proposition \ref{prop:BK}(1) that
\begin{align*}
V_{\bfs^{\sfc}_{\und{\la};f}}
\cong {\rm hd}~{\rm ind}^{\afHr}_{\mc H_{\vtg}(\mu)}
({\rm ev}^*_{u_m}(S^{\la^{(m)}})\otimes\cdots\otimes
{\rm ev}^*_{u_1}(S^{\la^{(1)}}))
=D^{\udla},
\end{align*}
proving the theorem.
%This proves the theorem.
\end{proof}
\subsection{A branching property.} When the parameter $q$ is not a root of unity, the Hecke algebra $\Ha$ is semisimple. Thus, in this case, the restriction to $\Ha$ of an irreducible $\afHr$-module $V_\fks$ is a direct sum of Specht modules $S^\la$, $\la\vdash r$. The determination of $S^\la$ which appears in $V_\fks$ is called the affine branching rule in \cite[Problem 4.3.6]{DDF}.

We now look at a relatively simple problem. For $\und{\la}=(\la^{(1)},\ldots,\la^{(m)})\in\mc P_m(r)$,
define $\udla^+=(\udla^+_1,\udla^+_2,\ldots)\in\mc P(r)$ by setting
$
\udla^+_i=\la^{(1)}_i+\la^{(2)}_i+\cdots+\la^{(m)}_i, \text { for }i\geq 1,
$
i.e.,
$$\udla^+=\la^{(1)}+\la^{(2)}+\cdots+\la^{(m)}.$$

Generally speaking, by~\eqref{slaind} and the Littlewood-Richardson rule, one can deduce that
the Specht module $S^{\udla^+}$ over $\Ha$
occurs in the restriction $S^{\und{\la}}|_{\Ha}$ with multiplicity one. (One can also prove this by an induced cell module argument; see \cite[Theorem 2.8]{Roi}.) If, in addition, $\udla'$ is a Kleshchev multipartition,
does $S^{\udla^+}$ appear in $D^{\udla}|_{\Ha}$? This sounds like a simple question but we didn't see a straightforward proof in literature.

\begin{cor}\label{cor:quotient}
Let $\und{\la}\in\mc P_m(r)$
such that $\udla'\in\mc K_{f^*}(r)$ with $f\in\mathfrak F_m$.
Then
\begin{enumerate}
\item $V_{\bfs^\sfc_{\und{\la};f}}$ is a quotient of $I_{\bfs^\sfc_{\und{\la};f}}$.

\item The Specht $\Ha$-module $S^{\udla^+}$  occurs
with multiplicity one in the restriction $D^{\und{\la}}|_{\Ha}$.
\end{enumerate}
\end{cor}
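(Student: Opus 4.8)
The plan is to read both assertions off the identification $D^{\udla}\cong V_{\bfs^{\sfc}_{\udla;f}}$ from Theorem~\ref{thm:identify}; throughout I write $\bfs=\bfs^{\sfc}_{\udla;f}$. Part (1) is then formal: Lemma~\ref{lem:Ibfs}(3) gives a surjective $\afHr$-homomorphism $\widetilde\varphi\colon I_{\bfs}\twoheadrightarrow S^{\udla}$, Corollary~\ref{cor:hdSpecht} gives the quotient map $S^{\udla}\twoheadrightarrow D^{\udla}$ onto the simple head, and composing the two and applying Theorem~\ref{thm:identify} exhibits $V_{\bfs}\cong D^{\udla}$ as a quotient of $I_{\bfs}$.

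For part (2) the crux is to identify the left cell module $E_{\mu(\bfs)}$, which --- as recorded just before Proposition~\ref{prop:Ro2} --- occurs with multiplicity one in $I_{\bfs}$ as an $\Ha$-module. By \eqref{mubfs} we have $\mu(\bfs)=(\udla')^\vee$, a composition whose parts are exactly the column lengths $\la^{(k)'}_j$ of all components of $\udla$. I would then invoke the standard description of Kazhdan--Lusztig cells in type $A$: since $q$ is not a root of unity $\Ha$ is semisimple, and, in the convention used here (for which \eqref{cell} makes $\C C_{w^0_\mu}$ the sign module of $\mc H(\mu)$, so that $C_{w_0}$ spans the sign $\Ha$-module $S^{(1^r)}$), the left cell module containing $w^0_\mu$ is the Specht module $S^{\bar\mu'}$, where $\bar\mu\in\mc P(r)$ is the partition rearrangement of $\mu$. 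Taking $\mu=(\udla')^\vee$ and conjugating, a direct count --- for each $j$ the part-count $(\bar\mu')_j$ equals the total number of columns of length $\ge j$ among the components, i.e.\ $\sum_k\la^{(k)}_j=\udla^+_j$ --- shows $E_{\mu(\bfs)}\cong S^{\udla^+}$.

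Granting this, part (2) follows immediately. The multiplicity of $S^{\udla^+}=E_{\mu(\bfs)}$ in $I_{\bfs}|_{\Ha}$ is $1$, and by Proposition~\ref{prop:Ro2}(3) the unique composition factor of $I_{\bfs}$ having $E_{\mu(\bfs)}$ as an $\Ha$-constituent is $V_{\bfs}\cong D^{\udla}$; hence this whole multiplicity is carried by $D^{\udla}$, forcing $[D^{\udla}|_{\Ha}:S^{\udla^+}]=1$. (Equivalently: by part (1), $D^{\udla}$ is a quotient of $I_{\bfs}$, so semisimplicity of $\Ha$ gives $[D^{\udla}|_{\Ha}:S^{\udla^+}]\le 1$, while Proposition~\ref{prop:Ro2}(3) gives $\ge 1$.) The only real obstacle is the cell-module identification $E_{\mu(\bfs)}\cong S^{\udla^+}$: one must fix the cell-labelling convention correctly and verify the column-counting identity relating $(\udla')^\vee$ to $\udla^+$; once that is in place, the rest is bookkeeping with Theorem~\ref{thm:identify}, Lemma~\ref{lem:Ibfs}, Corollary~\ref{cor:hdSpecht}, and Proposition~\ref{prop:Ro2}.
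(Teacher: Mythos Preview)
Your proposal is correct and follows essentially the same route as the paper: part~(1) via Lemma~\ref{lem:Ibfs}, Corollary~\ref{cor:hdSpecht}, and Theorem~\ref{thm:identify}; part~(2) via Proposition~\ref{prop:Ro2}(3), Theorem~\ref{thm:identify}, and the identification $E_{\mu(\bfs)}\cong S^{\udla^+}$ coming from $\mu(\bfs)'=((\udla')^\vee)'=\udla^+$. The paper handles the combinatorial identity $((\udla')^\vee)'=\udla^+$ by a citation to Macdonald \cite[I,~(1.8)]{Mac}, and treats the cell-module identification $E_\mu\cong S^{\mu'}$ as known, whereas you spell both out explicitly; otherwise the arguments coincide.
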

\begin{proof}
Part (1)  follows from Lemma~\ref{lem:Ibfs} and Theorem~\ref{thm:identify}. %\ref{thm:VsDla}.
By Proposition~\ref{prop:Ro2}(3) and Theorem~\ref{thm:identify}, %\ref{thm:VsDla},
 the cell module $E_{\mu(\bfs^\sfc_{\und{\la};f})}$
occurs with multiplicity one in $D^{\und{\la}}$ as $\Ha$-modules.
Observe that, by~\cite[I, (1.8)]{Mac} and~\eqref{mubfs},
$
\udla^+=((\und{\la}')^\vee)'=\mu(\bfs^\sfc_{\und{\la};f})'
$
and, hence, $S^{\udla^+}\cong E_{\mu(\bfs^c_{\und{\la};f})}$ occurs with multiplicity one in $D^{\und{\la}}|_{\Ha}$.
%Here, for a composition $\nu$ of $r$, $\nu'$ is the partition of $r$ with parts $\nu_i'=\#\{j\mid \nu_j\geq i \}$.
\end{proof}
\begin{rem}\label{rem:Vazirani}
%\begin{enumerate}
%\item
It would be interesting to find a direct approach to verifying that the Specht module
$S^{\udla^+}$ must occur in $D^{\und{\la}}|_{\Ha}$
whenever $D^{\und{\la}}$ is nonzero.
Then combining this with Lemma~\ref{lem:Ibfs} gives an elementary
way to determine the multisegments corresponding to the irreducible modules associated
to Kleshchev multipartitions without using Proposition~\ref{prop:Va} of  Vazirani.

%Thus if the Ariki-Koike algebra $\AKa$ is semisimple, then $D^{\und{\la}}$
%always coincides with $S^{\und{\la}}$ and
%hence we obtain an identification of the multisegments
%corresponding to the irreducible modules associated to multipartitions.

%\item Vazirani~\cite{Va} showed that
%the isomorphism~\eqref{eq:identify} actually holds for any multipartition
%$\la=(\la^{(1)},\ldots,\la^{(m)})$ such that $\und{\la}'$ is a Kleshchev multipartition
%via a different method. Our approach of deriving the partial identification
%in Theorem~\ref{thm:identify} is quite elementary.
%Vazirani's identification actually implies that $E_{\mu(\bfs_{\und{\la}})}$
%must occur in the restriction of $D^{\und{\la}}$ to $\Ha$ whenever $D^{\und{\la}}$ is nonzero.

%\end{enumerate}
\end{rem}
\section{Drinfeld polynomials of integral type}
In this section, we shall compute the Drinfeld polynomials
associated with the irreducible $\rmU_q(\afgl)$-modules which are inflated from the irreducible $\AKa$-modules associated with Kleshchev multipartitions.

\subsection{\bf The affine Schur-Weyl duality. }
Let $\rmU_q(\afgl)$ be the Drinfeld's {\it quantum loop algebra}, which is the $\C$-algebra generated by $\ttx^\pm_{i,s}$
($1\leq i<n$, $s\in\Z$), $\ttk_i^{\pm1}$ and $\ttg_{i,t}$ ($1\leq
i\leq n$, $t\in\Z\backslash\{0\}$) subject to certain explicit relations (which we do not need here); see, e.q., \cite[\S2.5]{DDF}.
Let $V_n$ be an $n$-dimensional vector space over the $\C$-space
and set
$$\Og_{(n)}=V_n\otimes_{\C}\C[X,X^{-1}]=V_n[X,X^{-1}].$$
Following \cite{VV}, there is a right action of $\afHr_\C$ on $\Og_{(n)}^{\ot r}$.
Then the endomorphism algebra
%We will always identify $\Og_{(n)}$ with $\Og$ via $\og_iX^m\mapsto \og_{i-mn}$.
%
%For each $r\geq 1$, we identify the tensor space $\Og_{(n)}^{\ot r}$ with
%$V^{\ot r}\ot_\C\C[X_1^{\pm1},\ldots,X_r^{\pm1}]=V^{\ot r}[X_1^{\pm 1},\ldots,X_r^{\pm 1}]$ via
%$$\og_{i_1}p_1(X)\otimes\cdots\otimes \og_{i_r}p_r(X)\lm (\og_{i_1}\otimes\cdots\otimes \og_{i_r})
%p_1(X_1)\cdots p_r(X_r).$$
% Set
%$$I(n,r)=\{\bfi=(i_1,\ldots,i_r)\mid 1\leq i_1,\ldots,i_r\leq n\}$$
%and define for $\bfi=(i_1,\ldots,i_r)\in I(n,r)$ and $\bfb=(b_1,\ldots,b_r)\in\Z^r$,
%$$\og_{\bfi}=\og_{i_1}\ot\cdots\ot\og_{i_r}\;\text{ and }\; X^\bfb=X_1^{b_1}\cdots X_r^{b_r}.$$
%Then $\Og_{(n)}^{\ot r}$ has a basis $\{\og_{\bfi}X^\bfb\mid \bfi\in I(n,r),\bfb\in\Z^r\}$.
%
%defined by
%\begin{equation}\label{afH action}
%\begin{cases}
%(\og_{\bf i}p) X_t=\og_{\bfi} (pX_t);\\
%{\og_{\bf i} T_k=\left\{\begin{array}{ll} q^2\og_{\bf
%i},\;\;&\text{if $i_k=i_{k+1}$;}\\
%q\og_{\bfi s_k},\;\;&\text{if $i_k<i_{k+1}$;}\\
%q\og_{\bfi s_k}+(q^2-1)\og_{\bf i},\;\;&\text{if
%$i_k>i_{k+1}$,}
%\end{array}\right.}
%\end{cases}
%\end{equation}
%where $\bfi=(i_j)\in I(n,r)$, $p\in\C[X_1^{\pm1},\ldots,X_r^{\pm1}]$, $1\leq k\leq r-1$, and $1\le t\le r$.
%Here $\bfi s_k=(i_1,\ldots,i_{k-1},i_{k+1},i_k,i_{k+2},\ldots,i_r)$.
%
\begin{align}
\afSr:={\rm End}_{\afHr}(\Og_{(n)}^{\ot r})
\end{align}
 is known as the { affine $q$-Schur algebra}.  In particular,
$\Og_{(n)}^{\ot r}$ becomes an $\afSr$-$\afHr$-bimodule.
By a double Hall algebra interpretation of $\rmU_q(\afgl)$
(see \cite[\S 3.5]{DDF}), there is a left action
of $\rmU_q(\afgl)$-module on $\Og_{(n)}^{\ot r}$ which
commutes with the right action of $\afHr$ on $\Og_{(n)}^{\ot r}$.
Furthermore, this gives rise to an algebra homomorphism
\begin{align}\label{afglact}
\zeta_r:\rmU_q(\afgl)\longrightarrow {\rm End}_{\afHr}(\Og_{(n)}^{\ot r})=\afSr,
\end{align}
which is surjective, see \cite[Corollary 3.8.4, Remark 3.8.5(2)]{DDF}.
We omit the details of the actions of $\afHr$ and $\rmU_q(\afgl)$
on $\Og_{(n)}^{\ot r}$ and the construction for $\zeta_r$ here; see \cite[\S3.4,3.5]{DDF}.

\subsection{\bf Identification of irreducible representations of $\mc S_\vtg(n,r)$. }
For $1\leq i\leq n$ and $s\in\Z$, define the elements $\ms
Q_{i,s}\in\rmU_q(\afgl)$ through the generating functions
\begin{equation}\label{exp}
\begin{split}
&\quad\qquad\ms Q_i^\pm(x):=\exp\bigg(-\sum_{t\geq
1}\frac{1}{[t]_q}\ttg_{i,\pm t} (qx)^{\pm t}\bigg)=\sum_{s\geq
0}\ms Q_{i,\pm s} x^{\pm s}\in\rmU_q(\afgl)[[x,x^{-1}]],
\end{split}
\end{equation}
where $\ds [t]_q=\frac{q^n-q^{-n}}{q-q^{-1}}$.
For a finite dimensional $\rmU_q(\afgl)$-module $V$ and $\la=(\la_1,\ldots,\la_n)\in\Z^n$,
a nonzero ($\la$-weight) vector $w\in V$ is called a {\it pseudo-highest weight
vector}\index{pseudo-highest weight vector} if there exist
some $Q_{i,s}\in\C$ such that
\begin{equation}\label{HWvector}
\ttx_{j,s}^+w=0,\quad\ms Q_{i,s}w=Q_{i,s}w,\quad\text{and}\quad
\ttk_iw=q^{\la_i}w,
\end{equation}
for all $1\leq i\leq n$, $1\leq j< n$, and $s\in\Z$. A $\rmU_q(\afgl)$-module
 $V$ is called a {\it pseudo-highest weight module} if $V=\rmU_q(\afgl) w$
for some pseudo-highest weight vector $w$.

Following \cite{FM}, an $n$-tuple of polynomials
$\bfQ=(Q_1(x),\ldots,Q_n(x))$ with constant terms $1$ is called {\it
dominant} if, for each $1\leq i\leq n-1$, the ratio
$Q_i(xq^{i-1})/Q_{i+1}(xq^{i+1})$ is a polynomial in $x$. Let
$\sQ(n)$ be the set of dominant $n$-tuples of polynomials.

For a polynomial $Q(x)=\prod_{1\leq i\leq m}(1-a_ix)\in\C[x]$
with $a_i\in\C^*$, put $Q^+(x)=Q(x)$ and define
$$Q^-(x)=\prod_{1\leq i\leq m}(1-a_i^{-1}x^{-1})\in\C[x^{-1}].$$
 Given a $\bfQ=(Q_1(x),\ldots,Q_{n}(x))\in\sQ(n)$, define
$Q_{i,s}\in\C$, for $1\leq i\leq n$ and $s\in\Z$, by the
following formula
\begin{equation}\label{Qis}
Q_i^\pm(x)=\sum_{s\geq 0}Q_{i,\pm s}x^{\pm s}.
\end{equation}
Let $I(\bfQ)$ be the left ideal of $\rmU_q(\afgl)$ generated by $\ttx_{j,s}^+ ,\ms
Q_{i,s}-Q_{i,s},$ and $\ttk_i-q^{\la_i}$, for $1\leq j<n$,
$1\leq i\leq n$, and $s\in\Z$, where $\la_i=\mathrm{deg\,}Q_i(x)$,
and define
\begin{equation}\label{MQ}
M(\bfQ)=\rmU_q(\afgl)/I(\bfQ).
\end{equation}
Then $M(\bfQ)$ has a unique irreducible quotient, denoted by $L(\bfQ)$.
The polynomial $Q_i(x)$ are called {\em Drinfeld polynomials} associated to $L(\bfQ)$.
Clearly, $L(\bfQ)$ is a pseudo-highest weight module with pseudo-highest weight
$\la=(\la_1,\ldots,\la_n)\in\La(n,r)$, where $\La(n,r)$ denotes the set
of compositions of $r$ into $n$ parts.

Let
$$\sQ(n)_r=\big\{\bfQ=(Q_1(x),\ldots,Q_n(x))\in\sQ(n)\mid r=\sum_{1\leq i\leq n}\mathrm{deg}\, Q_i(x)\big\}.$$

Observe that given a left $\afHr$-module $M$, the tensor product
$\Og_{(n)}^{\ot r}\otimes_{\afHr} M$ naturally affords a left $\afSr$-module.
Recall that $\{V_{\mfs}\mid\mfs\in\mathscr S_r\}$ is a complete set of irreducible $\afHr$-modules.
Further, set
$$
\mathscr S_r^{(n)}=\{\fks=
\{\sfs_1,\ldots,\sfs_t\}\in\mathscr S_r\mid t\geq 1,\,|\sfs_i|\leq n,\,\forall i\}.
$$
Clearly, if $n\geq r$, then $\mathscr S_r^{(n)}=\mathscr S_r$.

Given $n,r\geq1$ and $\mathfrak s\in\mathscr S_r^{(n)}$,
take $\bfs=(\sfs_1,\sfs_2,\ldots,\sfs_t)\in\mathfrak s$
with
\begin{equation}\label{sfs_k}
\sfs_k=(z_kq^{2i_k},z_kq^{2(i_k+1)},\ldots,z_kq^{2j_k})\in(\C^*)^{r_k},
\end{equation}
where $r_k=|\sfs_k|=j_k-i_k+1\in\Z_+$ for $1\leq k\leq t$.
We define
\begin{equation}\label{Qs}
\bfQ_{\mathfrak s}=(Q_1(x),\ldots,Q_n(x))\in \sQ(n)_r
\end{equation}
 by setting recursively
\begin{equation}\label{partialQ}
Q_k(x)
=\prod_{k\leq l\leq n}P_{l}(xq^{l+1-2k})
=P_k(xq^{-k+1})P_{k+1}(xq^{-k+2})\cdots
P_n(xq^{n+1-2k}),
\end{equation}
where
\begin{equation}\label{partialP}
P_{k}(x)=\prod_{1\leq b\leq t\atop r_b=k}(1-z_bq^{i_b+j_b}x)
\end{equation}
for $1\leq k\leq n$.
Here we use the convention that $P_i(x)=1$ if there does not exist $1\leq j\leq t$
such that $r_j=i$.
This gives a map
$$\partial: \mathscr S^{(n)}_r\longrightarrow \sQ(n)_r,\quad \mathfrak s\longmapsto \partial(\mathfrak s)=\bfQ_{\mathfrak s}.$$
%Set
%$$
%\bfQ_{\bfs,n}=(Q_1(u),\ldots,Q_n(u))\in\sQ(n)_r.
%$$
Thus, we obtain the associated irreducible $\rmU_q(\afgl)$-module $L(\bfQ_{\mathfrak s})$.
The following result is due to Deng, Du and Fu \cite{DD,DDF,F}.

\begin{prop}[{\cite[Theorem 6.6]{DD}, \cite[Theorem 4.4]{DF}}] \label{prop:DF&DD}
Assume $n,r\geq 1$.
\begin{enumerate}
\item
Both the set $\big\{L(\bfQ)\mid\bfQ\in\sQ(n)_r\big\}$
and the set
$
\{\Og_{(n)}^{\ot r}\otimes_{\afHr} V_{\mathfrak s}~|\mathfrak s\in\mathscr S_r^{(n)}\}
$
are complete sets of non-isomorphic irreducible representations of $\afSr$.
\item
For each $\mathfrak s\in\ms S_r^{(n)}$, there is a
$\rmU_q(\afgl)$-module isomorphism (or equivalently, an $\afSr$-module isomorphism)
$$
L(\bfQ_{\mathfrak s})\cong \Og_{(n)}^{\ot r}\ot_{\afHr} V_{\mathfrak s}.
$$
Hence, the map $\partial$ is a bijection.
\end{enumerate}
\end{prop}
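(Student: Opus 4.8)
The plan is to combine three ingredients --- the surjectivity of $\zeta_r$ (recalled above, from \cite{DDF}), the classification \cite{FM} of the finite dimensional simple $\rmU_q(\afgl)$-modules by their Drinfeld polynomials, and the Schur-functor formalism for affine Schur--Weyl duality developed in \cite{DDF} --- so that, once the first completeness statement in (1) is in place, part (2) (hence also the second completeness statement) reduces to a single explicit computation of a Drinfeld polynomial. Throughout I would set
\[
F:=\Og_{(n)}^{\ot r}\ot_{\afHr}(-),
\]
a functor from $\afHr$-modules to $\afSr$-modules. By \cite{DDF} the bimodule $\Og_{(n)}^{\ot r}$ is projective as a right $\afHr$-module (it is a direct sum of modules induced from one-dimensional representations of the parabolics $\mc H_{\vtg}(\mu)$, $\mu\in\La(n,r)$), so $F$ is exact; moreover $F$ sends each simple $\afHr$-module to a simple $\afSr$-module or to $0$.

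For the first set in (1): surjectivity of $\zeta_r$ identifies the category of $\afSr$-modules with the full subcategory of $\rmU_q(\afgl)$-modules annihilated by $\ker\zeta_r$, so every simple $\afSr$-module is a simple $\rmU_q(\afgl)$-module, hence $L(\bfQ)$ for a unique $\bfQ\in\sQ(n)$ by \cite{FM}. Since $1_{\afSr}=\sum_{\la\in\La(n,r)}\zeta_r(1_\la)$ decomposes into the weight idempotents, an $\afSr$-module has all its $\ttk$-weights in $\La(n,r)$; a short weight count --- using dominance of $\bfQ$, so that the underlying $\rmU_q(\mathfrak{gl}_n)$-representation is polynomial --- shows that $L(\bfQ)$ has this property exactly when $\sum_{i}\deg Q_i(x)=r$, i.e. $\bfQ\in\sQ(n)_r$. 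This yields $\{L(\bfQ)\mid\bfQ\in\sQ(n)_r\}$.

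Now fix $\fks\in\mathscr S_r^{(n)}$, choose a standard $\bfs=(\sfs_1,\dots,\sfs_t)\in\fks$, so that $I_\bfs={\rm ind}^{\afHr}_{\mc H_{\vtg}(\mu(\bfs))}(\C_{\widetilde{\sfs}_1}\ot\cdots\ot\C_{\widetilde{\sfs}_t})$ and $V_\fks={\rm hd}\,I_\bfs$ by Lemma~\ref{lem:induced} and Proposition~\ref{prop:Ro3}. Compatibility of affine Schur--Weyl duality with induction and with the coproduct of $\rmU_q(\afgl)$ \cite{DDF} gives $F(I_\bfs)\cong L_1\ot\cdots\ot L_t$, the $\rmU_q(\afgl)$-module tensor product of the evaluation modules $L_k$ built on $\Lambda^{|\sfs_k|}V_n$ with spectral parameter read off from $\sfs_k$; since $\Lambda^{>n}V_n=0$ this is nonzero iff every $|\sfs_k|\le n$, and by exactness $F(V_\fks)\ne 0$ precisely when $\fks\in\mathscr S_r^{(n)}$. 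The Drinfeld polynomials of a single $L_k$ are computed classically --- only the $|\sfs_k|$-th one is nontrivial, equal to the corresponding $P_{|\sfs_k|}$-factor of \eqref{partialP} --- and Drinfeld polynomials multiply under tensor products, so the tensor $v=v_1\ot\cdots\ot v_t$ of pseudo-highest weight vectors carries exactly the eigenvalue data encoded by $\bfQ_\fks$ via the recursion \eqref{partialQ}. Because $\bfs$ is standard, $L_1\ot\cdots\ot L_t$ is cyclic on $v$ (the ordering/cyclicity input, the $\rmU_q(\afgl)$-side analogue of Proposition~\ref{prop:Ro3}), and a cyclic pseudo-highest weight module has simple head, so ${\rm hd}\,F(I_\bfs)\cong L(\bfQ_\fks)$. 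As $F$ is right exact, $F(V_\fks)=F({\rm hd}\,I_\bfs)$ is a nonzero quotient of $F(I_\bfs)$, and being simple it must equal ${\rm hd}\,F(I_\bfs)=L(\bfQ_\fks)$. Combined with the first completeness statement, $\fks\mapsto F(V_\fks)=L(\bfQ_\fks)$ is then a bijection from $\mathscr S_r^{(n)}$ onto the set of simple $\afSr$-modules, which gives both the second completeness statement in (1) and, composing with $\bfQ\mapsto L(\bfQ)$, the bijectivity of $\partial$.

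The step I expect to be the main obstacle is the package of facts borrowed from \cite{DDF}: projectivity of $\Og_{(n)}^{\ot r}$ over the infinite dimensional algebra $\afHr$ and hence exactness of $F$; that $F$ carries simples to simples or $0$; and --- most delicately --- the compatibility of $F$ with the coproduct together with the precise tracking of spectral-parameter shifts needed to match the recursion \eqref{partialQ}--\eqref{partialP} exactly, along with the cyclicity of $L_1\ot\cdots\ot L_t$ in the standard order. These affine statements are considerably more intricate than their classical $q$-Schur analogues and carry the real weight of the argument; granted them, what remains is the bookkeeping above.
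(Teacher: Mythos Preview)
The paper does not prove this proposition: it is quoted verbatim from \cite[Theorem~6.6]{DD} and \cite[Theorem~4.4]{DF}, with no argument supplied here. So there is no ``paper's own proof'' to compare against; you are in effect reconstructing the proofs of those cited theorems.

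Your outline is a reasonable reconstruction and is, in broad strokes, how the cited papers proceed: the Schur functor $F=\Og_{(n)}^{\ot r}\ot_{\afHr}(-)$ is shown to take simples to simples or zero, the non-vanishing condition is identified with the segment-length bound $|\sfs_k|\le n$, and the identification $F(V_\fks)\cong L(\bfQ_\fks)$ is obtained by computing Drinfeld polynomials on a tensor product of fundamental (evaluation) modules. You are also right that the real content sits in the \cite{DDF} package---projectivity of the tensor space, compatibility of $F$ with induction and the coproduct, and the precise normalisation of spectral parameters that makes \eqref{partialQ}--\eqref{partialP} come out on the nose.

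Two points worth tightening. First, the step ``$F$ sends simples to simples or $0$'' is not automatic from exactness alone; in \cite{DDF,DD} it comes from a double-centraliser/Schur-functor argument (an explicit idempotent $e$ in $\afSr$ with $e\,\afSr\,e\cong\afHr$ when $n\ge r$, together with a truncation for $n<r$), and you should cite or reproduce that rather than assert it. Second, the cyclicity of $L_1\ot\cdots\ot L_t$ on $v_1\ot\cdots\ot v_t$ for \emph{standard} $\bfs$ is a genuine theorem (Chari--Pressley type) and is not merely ``the $\rmU_q(\afgl)$-side analogue of Proposition~\ref{prop:Ro3}''; indeed in \cite{DF} this step is handled differently, and one should check that the ordering convention in Definition~\ref{defn:order} matches the cyclicity hypothesis actually available in the quantum-affine literature. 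Granted these two inputs, the rest of your bookkeeping is correct.
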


\subsection{Drinfeld polynomials of integral type.} Recall from \S2.3 that the category $\afHr$-${\sf mod}^{\Z}$
has irreducible objects which are indexed by the set $\mathscr S_r^{\Z}$ of integral multisegments and they completely determine via \eqref{gen&int} the irreducible objects in $\afHr$-${\sf mod}$.

A polynomial $Q(x)\in\C[x]$ of constant term 1 is said to be of {\em integral type} if its roots are powers of $q^2$.
Let $\sQ(n)^{\Z}$ be the set of dominant $n$-tuples of polynomials of integral type.
Set
\begin{align*}
\sQ(n)^{\Z}_r=\sQ(n)^{\Z}\cap \sQ(n)_r,\qquad
\mathscr S_r^{(n),\Z}=\mathscr S^\Z_r\cap\mathscr S^{(n)}_r.
\end{align*}

Given $\mathfrak s\in\mathscr S_r^{(n)}$ with
$\bfs=(\sfs_1,\sfs_2,\ldots,\sfs_t)\in\mathfrak s$, where
$\sfs_k$
for $1\leq k\leq t$ are given in \eqref{sfs_k}.
Let $\bfQ_{\mfs}=(Q_1(x),\ldots,Q_n(x))$ be defined as in \eqref{Qs}.
By~\eqref{partialQ} and~\eqref{partialP}, we have
\begin{align}
Q_k(x)=\prod_{k\leq l\leq n}P_{l}(xq^{l+1-2k})
=&\prod_{k\leq l\leq n}\prod_{1\leq b\leq t,\atop j_b-i_b+1=l}(1-z_bq^{i_b+j_b+l+1-2k}x)\notag\\
=&\prod_{1\leq b\leq t,\atop j_b-i_b+1\geq k}(1-z_bq^{2(j_b+1-k)}x).
\label{partialQ1}
\end{align}
This implies that $\bfQ_{\mathfrak s}\in\sQ(n)^{\Z}_r$
if and only if $\mathfrak s\in\mathscr S_r^{(n),\Z}$.
Hence, the restriction of the map $\partial$ gives a bijection
\begin{equation}\label{partialint}
\partial^\Z: \mathscr S_r^{(n),\Z}\longrightarrow\sQ(n)^{\Z}_r.
\end{equation}

For $a\in\C^*$, analogous to~\eqref{sigma}, consider the Hopf algebra automorphism
\begin{equation}\omega_a:\rmU_q(\afgl)\longrightarrow \rmU_q(\afgl),\quad
\ttx^\pm_{i,s}\mapsto a^s\ttx^\pm_{i,s},\quad \ttg_{j,t}\mapsto a^t\ttg_{j,t}, \quad\ttk_j^{\pm1}\mapsto \ttk_j^{\pm1}\label{omega}.
\end{equation}
Thus, we may twist a $\rmU_q(\afgl)$-module $M$ by $\omega_a$ to get a
new module denoted by $M^{\omega_a}$.
For $\bfQ=(Q_1(x),\ldots,Q_n(x))\in\sQ(n)_r$, define
$\bfQ^{\omega_a}=(Q^{\omega_a}_1(x),\ldots,Q^{\omega_a}_n(x))$ by setting
\begin{equation}\label{Qomega}
Q^{\omega_a}_k(x)=Q_k(ax)
\end{equation}
for $1\leq k\leq n$.
Analogous to~\eqref{Vs-sigma}, we have the following.
\begin{lem}\label{LQomega}
Suppose $a\in\C^*$ and $\bfQ=(Q_1(x),\ldots,Q_n(x))\in\sQ(n)_r$.
Then
$$
L(\bfQ)^{\omega_a}\cong L(\bfQ^{\omega_a}).
$$
\end{lem}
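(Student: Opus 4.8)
The plan is to argue directly with the pseudo-highest weight formalism of \eqref{MQ}, paralleling the proof of Lemma~\ref{lem:Vs-sigma}. First I would record how $\omega_a$ acts on the distinguished elements $\ms Q_{i,s}$. Applying $\omega_a$ to the generating function \eqref{exp} and using $\omega_a(\ttg_{i,\pm t})=a^{\pm t}\ttg_{i,\pm t}$, one finds $\omega_a(\ms Q_i^\pm(x))=\ms Q_i^\pm(ax)$, and hence $\omega_a(\ms Q_{i,s})=a^s\ms Q_{i,s}$ for all $1\leq i\leq n$ and all $s\in\Z$. This is the only computation the argument really needs, and it is immediate from the exponential form of \eqref{exp}.

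Next I would take a pseudo-highest weight vector $w\in L(\bfQ)$, so that \eqref{HWvector} holds with the constants $Q_{i,s}$ given by \eqref{Qis}, and view $w$ as a vector of the twisted module $L(\bfQ)^{\omega_a}$, on which $u\in\rmU_q(\afgl)$ acts as $\omega_a(u)$. Then $\ttx^+_{j,s}w=\omega_a(\ttx^+_{j,s})w=a^s\ttx^+_{j,s}w=0$, and $\ms Q_{i,s}w=\omega_a(\ms Q_{i,s})w=a^sQ_{i,s}w$, and $\ttk_iw=\omega_a(\ttk_i)w=q^{\la_i}w$. Thus $w$ is a pseudo-highest weight vector of $L(\bfQ)^{\omega_a}$ with the same weight $\la=(\la_1,\ldots,\la_n)$ and with $\ms Q$-eigenvalues $a^sQ_{i,s}$. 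By \eqref{Qis} and \eqref{Qomega}, the dominant $n$-tuple of polynomials whose coefficients are exactly these eigenvalues is $\bfQ^{\omega_a}=(Q_1(ax),\ldots,Q_n(ax))$; here one also checks that $\bfQ^{\omega_a}\in\sQ(n)_r$, which is clear since substituting $ax$ for $x$ preserves the degree of each $Q_i$ and the polynomiality of each ratio $Q_i(xq^{i-1})/Q_{i+1}(xq^{i+1})$.

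Finally, since $\omega_a$ is an automorphism we have $L(\bfQ)^{\omega_a}=\omega_a(\rmU_q(\afgl))\,w=\rmU_q(\afgl)\,w$, so $L(\bfQ)^{\omega_a}$ is a pseudo-highest weight module, and it is simple because twisting a simple module by an algebra automorphism yields a simple module. Therefore $L(\bfQ)^{\omega_a}$ is a simple quotient of $M(\bfQ^{\omega_a})$, and by the uniqueness of the simple quotient $L(\bfQ^{\omega_a})$ of $M(\bfQ^{\omega_a})$ we conclude $L(\bfQ)^{\omega_a}\cong L(\bfQ^{\omega_a})$.

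I do not expect a genuine obstacle: the whole argument is bookkeeping with generating functions and the pseudo-highest weight formalism. The only steps needing a moment of care are the identity $\omega_a(\ms Q_{i,s})=a^s\ms Q_{i,s}$ read off from \eqref{exp}, and matching the twisted eigenvalues with the coefficients of $\bfQ^{\omega_a}$ via \eqref{Qis} and \eqref{Qomega}. An alternative, more indirect route via the affine Schur--Weyl duality of Proposition~\ref{prop:DF&DD} combined with Lemma~\ref{lem:Vs-sigma} is also available, but it would require tracking how $\omega_a$ interacts with the $\afSr$-$\afHr$-bimodule $\Og_{(n)}^{\otimes r}$, which is less transparent than the direct computation above.
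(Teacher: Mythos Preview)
Your argument is correct and follows essentially the same route as the paper's own proof: compute $\omega_a(\ms Q_{i,s})=a^s\ms Q_{i,s}$ from \eqref{exp}, verify that the pseudo-highest weight vector $w$ of $L(\bfQ)$ remains pseudo-highest in the twisted module with eigenvalues matching $\bfQ^{\omega_a}$, and conclude by irreducibility that $L(\bfQ)^{\omega_a}$ is the unique simple quotient of $M(\bfQ^{\omega_a})$. Your write-up is in fact slightly more detailed than the paper's (you explicitly check $\bfQ^{\omega_a}\in\sQ(n)_r$ and note cyclicity under the twisted action), but the strategy is identical.
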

\begin{proof}
Let $w$ be a pseudo-highest weight vector of $L(\bfQ)$.
By~\eqref{omega}, \eqref{exp} and~\eqref{HWvector},  $\omega_a(\ms Q_{i,s})=a^s\ms Q_{i,s}$
and hence
$$
\omega_a(\ttx^+_{j,s})w=0,\quad \omega_a(\ms Q_{i,s})w=a^s\ms Q_{i,s}w=a^sQ_{i,s}w, \quad \omega_a(\ttk_i)w=q^{\la_i}w,
$$
where $\la_i={\rm deg}\,Q_i(x)$ and $Q_{i,s}$ is defined in \eqref{Qis} for
$1\leq i\leq n, 1\leq j<n$ and $s\in\Z$.
This implies that $w$ is also a pseudo-highest weight vector of $L(\bfQ)^{\omega_a}$ and moreover
$L(\bfQ)^{\omega_a}$ is a quotient of $M(\bfQ^{\omega_a})$ by~\eqref{MQ} and~\eqref{Qomega}.
Then the lemma is proved since $L(\bfQ)^{\omega_a}$ is irreducible.
\end{proof}

%Recall the automorphism $\tau_a$ on segments and multisegments defined in \S2.4.
%Observe that by ~\eqref{stau}.
%By
%\begin{lem}\label{Qtau}
%Suppose $\mfs\in\mathscr S^{(n)}_r$ and $a\in\C^*$.
%Then we have
%$$
%\bfQ_{\mfs^{\tau_a}}=\bfQ^{\phi_a}.
%$$
%\end{lem}

 It is known from \cite[\S4.3]{FM} that $L(\bfQ)$ for $\bfQ\in\sQ(n)$ are all non-isomorphic
finite dimensional irreducible polynomial representations of $\rmU_q(\afgl)$. (We refer the reader to \cite{DDF,FM}
for the definition of polynomial representations of $\rmU_q(\afgl)$.)
We now use the discussion in \S2.3 and Proposition 6.2.1 to prove the following (cf. \cite[Section 3.7]{HL}).

\begin{theorem} \label{thm:intDrinfeld}
For $\bfQ\in\sQ(n)$, there exist $\bfQ^{(1)},\ldots,\bfQ^{(p)}\in\sQ(n)^{\Z}$ and $a_1,\ldots,a_p\in\C^*$
such that as $\rmU_q(\afgl)$-modules
$$
L(\bfQ)\cong L(\bfQ^{(1)})^{\omega_{a_1}}\otimes\cdots\otimes L(\bfQ^{(p)})^{\omega_{a_p}}.
$$
\end{theorem}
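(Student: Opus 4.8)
The plan is to reduce the statement for an arbitrary dominant $n$-tuple $\bfQ\in\sQ(n)$ to the integral case by exploiting the dictionary between Drinfeld polynomials and multisegments given by Proposition~\ref{prop:DF&DD}, together with the decomposition of irreducible $\afHr$-modules in \eqref{gen&int}. First, enlarge $n$ if necessary (passing from $\rmU_q(\afgl)$ to $\rmU_q(\widehat{\mathfrak{gl}}_N)$ with $N\geq r$ changes nothing about the isomorphism type of $L(\bfQ)$ as its restriction is the same, so we may assume $n\geq r$ and hence $\mathscr S^{(n)}_r=\mathscr S_r$ and $\sQ(n)_r=\sQ(n)$ on degree-$r$ pieces). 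Then $\bfQ=\partial(\fks)$ for a unique $\fks\in\mathscr S_r$, and by Proposition~\ref{prop:DF&DD}(2) we have $L(\bfQ)\cong\Og_{(n)}^{\otimes r}\otimes_{\afHr}V_\fks$.

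**Key steps.**
Take a standard $\bfs=(\sfs_1,\ldots,\sfs_t)\in\fks$; then its segments lie on finitely many disjoint lines $L_{a_1},\ldots,L_{a_p}$, giving $\bfs=\bfs^{(1)}\cup\cdots\cup\bfs^{(p)}$ as in Definition~\ref{defn:order}(2). By \eqref{gen&int},
$$
V_\fks\cong{\rm ind}^{\afHr}_{\mc H_{\vtg}(\mu)}\bigl(V_{\bfs^{(1)}}\otimes\cdots\otimes V_{\bfs^{(p)}}\bigr),\qquad\mu_i=\textstyle\sum_j|\sfs^{(i)}_j|.
$$
Next I would write $a_i=z_i$ for a representative scalar of the line $L_{a_i}$, so that $\bfs^{(i)}=(\mfs_i)^{\sigma_{a_i}}$ for an integral $\bfs$-piece $\mfs_i$ lying on $L_1$; by Lemma~\ref{lem:Vs-sigma}, $V_{\bfs^{(i)}}\cong V_{\mfs_i}^{\sigma_{a_i}}$. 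Applying the functor $\mc F=\Og_{(n)}^{\otimes r}\otimes_{\afHr}-$ and using that $\mc F$ intertwines parabolic induction over $\afHr$ with the tensor product of $\rmU_q(\afgl)$-modules coming from the coproduct (this is the affine Schur--Weyl statement; it is exactly what makes $\mc F(\mathrm{ind})$ a tensor product of the $\mc F$-images), one gets
$$
L(\bfQ)\cong\mc F(V_{\mfs_1}^{\sigma_{a_1}})\otimes\cdots\otimes\mc F(V_{\mfs_p}^{\sigma_{a_p}}).
$$
Finally, to move the twist $\sigma_{a_i}$ on the $\afHr$-side to the Hopf automorphism $\omega_{a_i}$ on the $\rmU_q(\afgl)$-side one checks $\mc F(V^{\sigma_a})\cong\mc F(V)^{\omega_a}$ (compare the action of $X_1$ on a weight vector with the action of $\ms Q_{i,s}$ via \eqref{exp}, \eqref{partialQ1}), so that $\mc F(V_{\mfs_i}^{\sigma_{a_i}})\cong\mc F(V_{\mfs_i})^{\omega_{a_i}}\cong L(\bfQ^{(i)})^{\omega_{a_i}}$ where $\bfQ^{(i)}=\partial^{\Z}(\overline{\mfs_i})\in\sQ(n)^{\Z}$ by \eqref{partialint}. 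Assembling these three identifications gives the claimed decomposition with $\bfQ^{(i)}\in\sQ(n)^{\Z}$ and $a_i\in\C^*$.

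**Main obstacle.**
The delicate point is the compatibility of the functor $\mc F$ with induction on the $\afHr$-side and tensor product on the $\rmU_q(\afgl)$-side: one needs that $\Og_{(n)}^{\otimes r}\otimes_{\afHr}\mathrm{ind}^{\afHr}_{\mc H_{\vtg}(\mu)}(-)$ is computed by the coproduct of $\rmU_q(\afgl)$ acting diagonally on a tensor product of the smaller modules $\Og_{(n)}^{\otimes\mu_i}\otimes_{\mc H_{\vtg}(\mu_i)}(-)$, and that under this identification the irreducibility in \eqref{gen&int} matches the known decomposition of $L(\bfQ)$ as a tensor product of ``prime'' factors. This should follow from the bimodule structure of $\Og_{(n)}^{\otimes r}$ and the surjectivity of $\zeta_r$ in \eqref{afglact} (e.g.\ via \cite[Ch.~3]{DDF}), but it requires a careful tracking of the coproduct; I would isolate this as a separate lemma. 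The twist compatibility $\mc F(-^{\sigma_a})\cong\mc F(-)^{\omega_a}$ is then a routine verification on pseudo-highest weight vectors, parallel to the proof of Lemma~\ref{LQomega}.
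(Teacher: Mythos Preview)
Your overall strategy matches the paper's: pass from $\bfQ$ to a multisegment via Proposition~\ref{prop:DF&DD}, decompose the standard representative by lines using~\eqref{gen&int}, argue that the Schur--Weyl functor converts parabolic induction into tensor product (this is exactly the chain~\eqref{LQtensor} in the paper, and indeed the step requiring care), and then handle the twist.

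Two points on execution. First, the enlargement $n\mapsto N\geq r$ is both unnecessary and not justified as written: the statement concerns $\rmU_q(\afgl)$-modules for fixed $n$, and there is no obvious Hopf-algebra map through which a tensor decomposition over $\rmU_q(\widehat{\mathfrak{gl}}_N)$ would restrict to one over $\rmU_q(\afgl)$; your sentence ``changes nothing about the isomorphism type'' hides a real issue. The paper avoids this entirely: since $\partial:\mathscr S^{(n)}_r\to\sQ(n)_r$ is already a bijection, the multisegment $\mfs$ with $\bfQ=\bfQ_\mfs$ lies in $\mathscr S^{(n)}_r$, so every segment has length $\leq n$, and this bound is inherited by each piece $\mfs^{(k)}$. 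Everything therefore stays inside $\rmU_q(\afgl)$ throughout, with no appeal to a larger rank. Second, for the twist the paper takes a slightly more direct route than your proposed functor compatibility $\mc F(V^{\sigma_a})\cong\mc F(V)^{\omega_a}$: it first applies the functor to obtain $L(\bfQ_{\mfs^{(k)}})$, then checks directly from~\eqref{partialQ1} and~\eqref{s-sigma} that $\bfQ_{\mfs^{(k)}}=(\bfQ^{(k)})^{\omega_{a_k}}$ as tuples of polynomials, where $\bfQ^{(k)}=\partial^\Z\bigl(\overline{(\bfs^{(k)})^{\sigma_{a_k^{-1}}}}\bigr)\in\sQ(n)^\Z$, and concludes by Lemma~\ref{LQomega}. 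This uses only results already in hand and avoids formulating a separate compatibility lemma.
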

\begin{proof}
%to understand irreducible $\afSr$-modules, it is enough to study
%those irreducible $\afSr$-modules $L(\bfQ)$ for $\bfQ\in\sQ(n)^{\Z}_r$.
%More precisely,
Write $\bfQ=(Q_1(u),\ldots,Q_n(u))$ and let $r=\sum_i{\rm deg}Q_i(u)$.
By Proposition~\ref{prop:DF&DD},
there exists $\mfs\in\mathscr S^{(n)}_r$ such that $\bfQ=\bfQ_{\mfs}$ and
\begin{equation}\label{LQ&Vs}
L(\bfQ)\cong \Og_{(n)}^{\ot r}\ot_{\afHr} V_{\mfs}.
\end{equation}
Suppose $\bfs\in\mfs$ is
standard in the sense of Definition~\ref{defn:order}(2)
with the chain decomposition
$\bfs=\bfs^{(1)}\vee\bfs^{(2)}\vee\cdots\vee\bfs^{(p)}$, where every
$\bfs^{(k)}$ is a chain lying on the line $L_{a_k}$ for $1\leq k\leq p$.
Let $\mfs^{(k)}$ be the multisegment containing $\bfs^{(k)}$ for $1\leq k\leq p$.
By~\eqref{gen&int}, \eqref{LQ&Vs}, and Proposition~\ref{prop:DF&DD}, we have the following isomorphisms of $\rmU_q(\afgl)$-modules
\begin{align}
L(\bfQ)&\cong \Og_{(n)}^{\ot r}\ot_{\afHr}
{\rm ind}^{\afHr}_{\mc H_{\vtg}(\mu)}(V_{\mfs^{(1)}}\otimes\cdots\otimes V_{\mfs^{(p)}})\notag\\
&\cong\Og_{(n)}^{\ot r}\ot_{\mc H_{\vtg}(\mu)}\big(V_{\mfs^{(1)}}\otimes\cdots\otimes V_{\mfs^{(p)}}\big)\notag\\
&\cong \big(\Og_{(n)}^{\ot r_1}\otimes_{\mc H_{\vtg}(r_1)}V_{\mfs^{(1)}}\big)
\otimes\cdots\otimes\big(\Og_{(n)}^{\ot r_p}\otimes_{\mc H_{\vtg}(r_p)}V_{\mfs^{(p)}}\big)\notag\\
&\cong L(\bfQ_{\mfs^{(1)}})\otimes\cdots\otimes L(\bfQ_{\mfs^{(p)}}), \label{LQtensor}
\end{align}
where $\mu=(r_1,\ldots,r_p)$ and $r_k=\sum_i|\sfs^{(k)}_i|$ for $1\leq k\leq p$.

Furthermore, set
\begin{equation}\label{intsegment}
\bfs^{(k)'}=(\bfs^{(k)})^{\sigma_{a_k^{-1}}}
\end{equation}
for $1\leq k\leq p$.
Then $\mfs^{(k)'}:=\overline{\bfs^{(k)'}}\in\mathscr S^\Z_{r_k}$ and, hence,
$
\bfQ^{(k)}:=\bfQ_{\mfs^{(k)'}}\in\sQ(n)_{r_k}^{\Z}
$
for $1\leq k\leq p$ are of integral type.
Then, by~\eqref{intsegment}, \eqref{partialQ1} and \eqref{s-sigma}, one can easily deduce that
$$
\bfQ_{\mfs^{(k)}}=\bfQ_{(\mfs^{(k)'})^{\sigma_{a_k}}}=\bfQ^{\omega_{a_k}}_{\mfs^{(k)'}}=(\bfQ^{(k)})^{\omega_{a_k}}
$$
for $1\leq k\leq p$.
This together with Lemma~\ref{LQomega} and~\eqref{LQtensor} gives rise to the desired isomorphism of
$\rmU_q(\afgl)$-modules
$$
L(\bfQ)\cong L(\bfQ^{(1)})^{\omega_{a_1}}\otimes\cdots\otimes L(\bfQ^{(p)})^{\omega_{a_p}}.
$$
\end{proof}
%{\red Let $\Delta$ be the comultiplication of a Hopf algebra. Set
%$$
%\Delta^{(0)}={\rm id}, \Delta^{(1)}=\Delta, \Delta^{(2)}=({\rm id}\otimes\Delta)\circ\Delta, \text{ etc. }
%$$
%Then by associativity of $\Delta$ we have
%$$
%\Delta^{(r-1)}=(\Delta^{(r_1-1)}\otimes\cdots\otimes\Delta^{(r_p-1)})\circ\Delta^{(p-1)}
%$$
%for $p\geq 1$ and $r_1,\ldots,r_p\geq 1$ such that $r=r_1+\cdots+r_p$.
%}

Now consider
the functor
\begin{equation}\label{sF}
{\mc F}:\afHr\text{-{\sf mod}}^{\Z}\longrightarrow\afSr\text{-{\sf mod}},\quad L\longmapsto\Og_{(n)}^{\otimes r}\otimes_{\afHr} L.
\end{equation}
We form full subcategory $\afSr\text{-{\sf mod}}^{\Z}$ whose objects are isomorphic to ${\mc F}(L)$ for $L\in
\text{Ob}(\afHr\text{-{\sf mod}}^{\Z})$. Then we have the following remark as an analog of the result in \S2.3.

%In this subsection, we will generalise the result in \S2.3 to affine $q$-Schur algebras by showing that irreducible objects
%in $\afSr\text{-{\sf mod}}^{\Z}$ determine all irreducible objects in $\afSr\text{-{\sf mod}}$ and
%irreducible objects in $\oplus_{r\geq0}\afSr\text{-{\sf mod}}^{\Z}$
%determine all irreducible polynomial representations of $\rmU_q(\afgl)$.

\begin{rem}
%Retain the notations in Theorem~\ref{thm:intDrinfeld}.
%Observe that ${\mathcal S}_{\vtg}(\cycn,r)$ can be naturally regarded as a subalgebra of the tensor product
%${\mathcal S}_{\vtg}(\cycn,r_1)\otimes\cdots\otimes{\mathcal S}_{\vtg}(\cycn,r_p)$.
%Then, by~\eqref{LQtensor},
%$L(\bfQ)$ is isomorphic to the restriction to ${\mathcal S}_{\vtg}(\cycn,r)$
%of the irreducible
%${\mathcal S}_{\vtg}(\cycn,r_1)\otimes\cdots\otimes{\mathcal S}_{\vtg}(\cycn,r_p)$-module
%$L(\bfQ_{\mfs^{(1)}})\otimes\cdots\otimes L(\bfQ_{\mfs^{(p)}})$.
By Theorem~\ref{thm:intDrinfeld}, to study irreducible $\afSr$-modules
or finite dimensional irreducible polynomial representation of $\rmU_q(\afgl)$, it is enough to consider those $L(\bfQ)$
associated with Drinfeld polynomials $\bfQ$ of integral type.
In other words, irreducible objects in $\afSr\text{-{\sf mod}}^{\Z}$ determine all irreducible objects in $\afSr\text{-{\sf mod}}$ and every irreducible polynomial representation of $\rmU_q(\afgl)$  is determined
by the irreducible objects in the full subcategory $\rmU_q(\afgl)\text{-{\sf mod}}^{\Z}$ which is defined by inflating $\oplus_{r\geq0}\afSr\text{-{\sf mod}}^{\Z}$.
\end{rem}

\subsection{\bf Standard multipartitions and Drinfeld polynomials.}
For $f\in\mathfrak F$, $r\geq 1$,
let
\begin{align*}
\mc K^{s}_{f,(n)}(r)
=\{\udga\in\mc K^s_{f}(r)~|~\ga^{(k)}_1\leq n, \forall k\geq 1\}, \qquad
\mc K^{s}_{(n)}(r)=\dot{\bigcup}_{f\in\mathfrak F}\mc K^{s}_{f,(n)}(r).
\end{align*}
Recall from Theorem \ref{thm:standardK} the bijective map $\theta=\eta^{-1}:\mc K^s(r)\to\mathscr S_r^{\Z}$. Restriction induces a bijection
$$\theta^{\Z}:\mc K^s_{(n)}(r)\to\mathscr S_r^{(n),\Z}.$$

The two parts of the following result are the counterparts of Theorems \ref{thm:standardK} and
\ref {thm:identify}, respectively.

\begin{thm}\label{thm:K&D}
The following holds for $n,r\geq 1$:
\begin{enumerate}
\item There is a bijection
$$
\partial^{\Z}\circ\theta^{\Z}: \mc K^{s}_{(n)}(r)\longrightarrow
\sQ(n)_r^{\Z}
$$
such that if $\bfQ=\partial^{\Z}\circ\theta^{\Z}(\udga)\in\sQ(n)^{\Z}_r$ for some $\udga\in {\mc K}^{s}_{(n)}(r)$, then the following isomorphism of $\rmU_q(\afgl)$-modules (or $\afSr$-modules) holds
$$
L(\bfQ)\cong \Og_{(n)}^{\ot r}\ot_{\afHr} D^{\udga'}.
$$

\item Let $\udla\in\mc P_m(r)$ and $f\in\mathfrak F$.
Assume that $\udla'\in\mc K_{f^*}(r)$ and
$\ell(\la^{(k)})\leq n$ for $1\leq k\leq m$
and write $\bfs=\bfs^{\sfc}_{\udla;f}$ and $\mathfrak s=\bar{\bfs}$.
Then $\bfQ_{\mathfrak s}=(Q_1(x),Q_2(x),\ldots,Q_n(x))$ are given by
\begin{equation}\label{rrr}
Q_i(x)=\prod_{1\leq k\leq m, 1\leq j\leq \la^{(k)}_i}(1-q^{2(f_k+j-i)}x).
\end{equation}
In other words, for every $1\leq i\leq n$,
$Q_i(x)$ is the polynomial whose roots are determined by
the residues (with respect to $\udu_f$)
of the nodes in $i$th row of $\la^{(k)}$ for all $1\leq k\leq m$.
Moreover, the following isomorphism of $\rmU_q(\afgl)$-modules holds:
$$
\Og_{(n)}^{\ot r}\ot_{\afHr} D^{\udla}\cong L(\bfQ_{\mathfrak s}).
$$
\end{enumerate}
\end{thm}
\begin{proof}
Suppose $\bfQ\in\sQ(n)^{\Z}_r$.
By Proposition~\ref{prop:DF&DD} and~\eqref{partialint}, there exists a unique
$\mathfrak s\in\mathscr S^{(n),\Z}_r$ such that $\bfQ=\partial^{\Z}(\mathfrak s)$ and
$L(\bfQ)\cong \Og_{(n)}^{\ot r}\ot_{\afHr} V_{\mathfrak s}$.
Then, by Theorem~\ref{thm:standardK}, we have $\udga=\eta(\mathfrak s)\in\mc K^{s}_{(n)}(r)$ and,
by Theorem~\ref{thm:VsDla}, $V_\fks\cong D^{\udga'}$ and so
$$
L(\bfQ)\cong \Og_{(n)}^{\ot r}\ot_{\afHr}D^{\udga'},
$$
proving (1).

Fix $1\leq i\leq n$.
Recall from~\eqref{scla} that we have
$$
\bfs^{\sfc}_{\und{\la};f}=(\sfs_1^{(m)},\ldots,\sfs^{(m)}_{\la^{(m)}_1},\ldots,
\sfs^{(1)}_1,\ldots,\sfs^{(1)}_{\la^{(1)}_1}),
$$
where
$
\sfs^{(k)}_j=(q^{2(f_k+j-\la^{(k)'}_j)},\ldots, q^{2(f_k+j-2)},q^{2(f_k+j-1)})
$
for $1\leq k\leq m$.
Then, by~\eqref{partialQ1},
\begin{align*}
Q_i(x)
=\prod_{1\leq k\leq m}\prod_{1\leq j\leq\la^{(k)}_1, \la^{(k)'}_j\geq i}(1-q^{2(f_k+j-i)}x)
=\prod_{1\leq k\leq m}\prod_{1\leq j\leq\la^{(k)}_i}(1-q^{2(f_k+j-i)}x).
\end{align*}
%In particular,
%if $\la^{(k)'}_j=i$ with $1\leq k\leq m$,
%then
%$$
%\sfs^{(k)}_j=(u_kq^{2j-i-1}q^{-i+1},u_kq^{2j-i-1}q^{-i+3},\ldots,u_kq^{2j-i-1}q^{i-1}).
%$$
%This implies that
%$$
%P_i(x)=\prod_{1\leq k\leq m}\prod_{1\leq j\leq\la^{(k)}_1,\la^{(k)'}_j=i}(1-u_kq^{2j-i-1}x).
%$$
%Then by~\eqref{partial}, we compute
%\begin{align*}
%Q_i(x)
%=&P_i(q^{-i+1}x)P_{i+1}(q^{-i+2}x)\cdots
%P_{n-1}(q^{n-2i}x)P_n(q^{n+1-2i}x)\\
%=&\Big(\prod_{1\leq k\leq m}\prod_{1\leq j\leq\la^{(k)}_1,\la^{(k)'}_j=i}(1-u_kq^{2j-i-1}q^{-i+1}x)\Big)\\
%&\cdot\Big(\prod_{1\leq k\leq m}\prod_{1\leq j\leq\la^{(k)}_1,\la^{(k)'}_j=i+1}(1-u_kq^{2j-i-2}q^{-i+2}x)\Big)\\
%&\cdots\Big(\prod_{1\leq k\leq m}\prod_{1\leq j\leq\la^{(k)}_1,\la^{(k)'}_j=n}(1-u_kq^{2j-n-1}q^{n+1-2i}x)\Big)\\
%=&\Big(\prod_{1\leq k\leq m}\prod_{1\leq j\leq\la^{(k)}_1,\la^{(k)'}_j=i}(1-u_kq^{2j-2i}x)\Big)\\
%&\cdot\Big(\prod_{1\leq k\leq m}\prod_{1\leq j\leq\la^{(k)}_1,\la^{(k)'}_j=i+1}(1-u_kq^{2j-2i}x)\Big)\\
%&\cdots\Big(\prod_{1\leq k\leq m}\prod_{1\leq j\leq\la^{(k)}_1,\la^{(k)'}_j=n}(1-u_kq^{2j-2i}x)\Big)\\
%=&\prod_{1\leq k\leq m}\prod_{1\leq j\leq\la^{(k)}_1,i\leq \la^{(k)'}_j\leq n}(1-u_kq^{2(j-i)}x)\\
%=&\prod_{1\leq k\leq m}\prod_{1\leq j\leq\la^{(k)}_i}(1-u_kq^{2(j-i)}x).
%\end{align*}
Now, (2) follows from Theorem~\ref{thm:identify}
and Proposition~\ref{prop:DF&DD}.
\end{proof}

%\begin{rem}
%Given $\bfQ=(Q_1(u),\ldots,Q_n(u))\in\sQ(n)_r$, by Proposition~\ref{prop:DF&DD},
%there exists $\bar{\bfs}\in\mathscr S^{(n)}_r$ such that $\bfQ=\bfQ_{\bar{\bfs}}$.
%Take a representative $\bfs\in\bar{\bfs}$ such that $\bfs$ is
%standard in the sense of Definition~\ref{defn:order}(3)
%with $\bfs=\bfs^{(1)}\cup\bfs^{(2)}\cup\cdots\cup\bfs^{(p)}$.
%By \S3.3, for each $\bfs^{(i)}$,
%there exist $a\in\C^*$ and $\bfs^{(i)'}$ being standard such that
%$\overline{\bfs^{(i')}}\in\mathscr S^\Z_{r_i}$
%for $r_i\geq 1$ and $V_{\bfs^{(i)}}$ can be realized as a twist of
%$V_{\bfs^{(i)'}}$ via the automorphism $\tau_a$.
%By Theorem~\ref{thm:VsDla}, $V_{\bfs^{(i)'}}\cong D^{\udla_{\bfs^{(i)'}}}$.
%Therefore one can give an explicit construction of $L(\bfQ)$ via the irreducible
%representations of Ariki-Koike algebras for each $\bfQ=(Q_1(u),\ldots,Q_n(u))\in\sQ(n)_r$.
%\end{rem}
%
Note that,
in the case $m=1$ and $u_1=1$, Theorem~\ref{thm:K&D} recovers \cite[Theorem 7.2]{DF}.

\begin{rem}\label{cr-reading} By Theorems \ref{thm:VsDla} and \ref{thm:K&D},  the functor $\mc F$ in \eqref{sF} has the following combinatorial description.
For $\mathfrak s\in\mathscr S_r^{(n),\Z}$, there is
 a standard multipartition $\udla'\in\mc K^s_{f^*,(n)}(r)$ as constructed in \eqref{ga_s} such that the inflation of $D^{\udla}$ is the irreducible $\sH_\vtg(r)_{\mathbb C}$-module $V_{\mathfrak s}$ and $\mathfrak s=\overline{\bfs^\sfc_{\udla;\udu_f}}$  is the multisegment associated with the column residual segments \eqref{scla} of $\udla$. The image $\mc F(V_{\mathfrak s})$ is the irreducible $\rmU_q(\afgl)$-module $\Og_{(n)}^{\ot r}\ot_{\afHr} D^{\udla}$ that is isomorphic to the irreducible polynomial representation  $L(\bfQ_{\mathfrak s})$, where the dominant Drinfeld polynomials $\bfQ_{\mathfrak s}$ are defined by the row residual segments \eqref{srla} of $\udla$. This column-reading vs. row-reading of residual segments characterises the combinatorial feature of  the affine Schur--Weyl duality.
%for a standard multipartition $\udla\in\mc K^s_{f,(n)}(r)$, the inflation of $D^{\udla}$ is the irreducible $\sH_\vtg(r)$-module $V_{\mathfrak s}$, where $\mathfrak s=\overline{\bfs^\sfc_{\udla;\udu_f}}$  is the multisegment associated with the sequence of column residual segments as given in \eqref{scla}, while the irreducible $\rmU_q(\afgl)$-module $
%\Og_{(n)}^{\ot r}\ot_{\afHr} D^{\udla}$ is isomorphic to the irreducible polynomial representation  $L(\bfQ_{\mathfrak s})$ associated with the dominant Drinfeld polynomials $\bfQ_{\mathfrak s}$ defined by row residual segments as in \eqref{rrr}. This column-reading vs. row-reading of residual segments characterises combinatorially the affine Schur-Weyl duality.
\end{rem}

\noindent
{\bf Acknowledgement.} The second author would like to thank Weiqiang Wang, Alexander Kleshchev, and Jun Hu for some helpful discussions.

\end{document}